\newtheorem{Theorem}{\bf Theorem}[section]
\newtheorem{Proposition}{\bf Proposition}[section]
\newtheorem{Lemma}{\bf Lemma}[section]
\theoremstyle{definition}
\newtheorem{Remark}{\bf Remark}[section]
\newtheorem{Definition}{\bf Definition}[section]
\newcommand{\jn}{\quad\textrm{in}\quad}
\newcommand{\on}{\quad\textrm{on}\quad}
\newcommand{\jf}{\quad\textrm{if}\quad}
\newcommand{\FA}{\quad\textrm{for any}\quad}
\newcommand{\Ker}{\operatorname{Ker}}
\renewcommand{\Im}{\operatorname{Im}}
\newcommand{\supp}{\operatorname{supp}}
\newcommand{\loc}{{\mathrm{loc}}}
\newcommand{\R}{\mathbb{R}}
\newcommand{\D}{\mathcal{D}}
\numberwithin{equation}{section}
\title{Supercritical Lane–Emden equation on a cone with an
inhomogeneous Dirichlet boundary condition
}
\author{
Sho Katayama\footnote{Email: \href{mailto:katayama-sho572@g.ecc.u-tokyo.ac.jp}{katayama-sho572@g.ecc.u-tokyo.ac.jp}}
\vspace{5pt}\\
Graduate School of Mathematical Sciences,\\
The University of Tokyo,\\
3-8-1 Komaba, Meguro-ku, Tokyo 153-8914, Japan
}
\date{}
\begin{document}
\maketitle
\renewcommand{\thefootnote}{\fnsymbol{footnote}}
\footnote[0]{2020AMS Subject Classification: 35J25, 35J61, 35B09, 35B32}
\footnote[0]{Keywords: Lane--Emden equation, supercritical nonlinearity, multiple positive solutions, inhomogeneous boundary condition, Joseph--Lundgren exponent}
\renewcommand{\thefootnote}{\arabic{footnote}}
  \begin{abstract}
    We consider the Lane--Emden equation with a supercritical nonlinearity with an inhomogeneous Dirichlet boundary condition on an infinite cone. Under suitable conditions for the boundary data and the exponent of nonlinearity, we give a complete classification of the existence/nonexistence of a solution with respect to the size of boundary data. Moreover, we give a result on the multiple existence of solutions via bifurcation theory. We also state results on Hardy--H\'{e}non equations on infinite cones as a generalization.
  \end{abstract}
  \section{Introduction}
  This paper concerns a structure of solutions to a boundary value problem of the Lane--Emden equation
  \begin{equation}\label{L}\tag{\mbox{$\textrm{L}_\kappa$}}
    \left\{\begin{aligned}
      -\Delta u&=u^p\quad& &\textrm{in}\quad& &\Omega,\\
      u&>0\quad& &\textrm{in}\quad& &\Omega,\\
      u&=\kappa\mu& &\textrm{on}\quad& &\partial\Omega\setminus\{0\},
    \end{aligned}\right.
  \end{equation}
  where $p>1$, $\kappa>0$, and $\mu$ is a nonnegative non-zero continuous function on $\partial\Omega\setminus\{0\}$. Here, $\Omega$ is an infinite cone in $\R^N$, namely
  \begin{equation}\label{eq:cone}
  \Omega:=\{r\theta\in\R^N:r>0,\theta\in A\}
  \end{equation}  
  with $N\ge 2$ and $A$ being a subdomain of the unit sphere $S^{N-1}$ with a smooth boundary. A typical example of an infinite cone is the half-space $\R^N_+:=\{(x_1,\ldots,x_N)\in\R^N: x_N>0\}$.

 In this paper, under appropriate assumptions on the exponent of the nonlinearity $p$ and the boundary data $\mu$, we prove the existence/nonexistence and multiplicity of solutions to problem \eqref{L}. More precisely, our results state the existence of two constants $0\le\kappa_*<\kappa^*<\infty$ with the following properties.
\begin{enumerate}[label={\rm(\roman*)}]
  \item If $0<\kappa<\kappa^*$, then problem \eqref{L} possesses a solution.
  \item If $\kappa>\kappa^*$, then problem \eqref{L} possesses no solutions.
  \item If $\kappa=\kappa^*$, then problem \eqref{L} possesses a unique solution.
  \item If $\kappa_*<\kappa<\kappa^*$, then problem \eqref{L} possesses at least two solutions.
\end{enumerate}
  The existence result in the critical case (iii) and the multiplicity result (iv) in the Sobolev supercritical case $p>p_S$ are our main interests. Here, $p_S$ is the Sobolev critical exponent, namely
  \[
  p_S=\infty\jf N=2,\quad p_S:=\frac{N+2}{N-2}\jf N\ge 3.
  \]
 Existence and nonexistence of solutions to the boundary value problem for the Lane--Emden equation
  \begin{equation}\label{eq:SEBVP}
  \left\{\begin{aligned}
    -\Delta u&=u^p\quad & &\text{in}\quad& &\Omega,\\
    u&>0\quad& &\textrm{in}\quad& &\Omega,\\
    u&=\kappa\mu& &\textrm{on}\quad& &\partial\Omega,
    \end{aligned}\right.
  \end{equation}
  have been studied in various settings. Among others, Bidaut V\'eron and Vivier \cite{BV} proved that under assumptions that $\Omega$ is a bounded domain and $1<p<(N+1)/(N-1)$, problem \eqref{eq:SEBVP} possesses a solution for any nonnegative nontrivial Radon measure $\mu$ on $\partial\Omega$ and sufficiently small $\kappa$ (see also e.g., \cite{LX}). 
On the other hand, in the case $\Omega$ is an infinite cone \eqref{eq:cone}, Berestycki, Capuzzo--Dolcetta, and Nirenberg \cite{BCN} proved that if
  \begin{equation}\label{eq:pg}
  p\le p^*_\gamma:=\frac{N+\gamma}{N-2+\gamma},
  \end{equation}
there exists no function $u\in C^2(\Omega)$ such that
  \[
  -\Delta u\ge u^p\jn\Omega,\quad u>0\jn\Omega.
  \]
  Here, $\gamma$ is a positive constant determined as follows. Let $\Lambda>0$ be the first eigenvalue for the eigenvalue problem
  \begin{equation}\label{eq:Lambda}
    -\Delta'\psi=\Lambda\psi\jn A,\quad \psi=0\on \partial A,
  \end{equation}
  where $\Delta'$ is the Laplace--Beltrami operator on $S^{N-1}$. Then $\gamma$ is defined as the unique positive root of the equation
  \begin{equation}\label{eq:gamma}
    \gamma(N-2+\gamma)=\Lambda.
  \end{equation}
  (See also e.g., \cites{BE,BP,L,MP}.) We note that in the case $\Omega=\R^N_+$, since $A$ is the half-sphere $S^{N-1}_+=\{(\theta_1,\ldots,\theta_N)\in S^{N-1}:\ \theta_N>0\}$, the first eigenvalue $\Lambda$ equals $N-1$, and thus $\gamma=1$. Hence, this result implies that if 
  \[
  p\le p^*_1=\frac{N+1}{N-1},
  \]
  then there exists no positive function $u\in C^2(\R^N_+)$ such that $-\Delta u\ge u^p$ in $\R^N_+$. (See also e.g., \cite{GS}.)
  
  For the case $\Omega=\R^N_+$ and $p>p^*_1$, Fila, Ishige, and Kawakami \cites{FIK1,FIK3} proved that while problem \eqref{eq:SEBVP} possesses no solutions $u$ for any $\kappa>0$ if a nonnegative non-zero function $\mu\in L^\infty(\partial\R^N_+)$ satisfies
  \[
  \limsup_{|x|\to\infty}|x|^\frac{2}{p-1}\mu(x)=\infty,
  \]
  problem \eqref{eq:SEBVP} possesses a solution $u$ for sufficiently small $\kappa>0$ if
  \[
  \limsup_{|x|\to\infty}|x|^\frac{2}{p-1}\mu(x)<\infty.
  \]
  They also proved that the existence of a solution to problem \eqref{eq:SEBVP} is equivalent to the existence of a solution to the Lane--Emden equation with a dynamical boundary condition, namely
  \[
  \left\{\begin{aligned}
    -\Delta u&=u^p\quad & &\text{in}\quad& &\R^N_+\times(0,\infty),\\
    u&>0\quad& &\textrm{in}\quad& &\R^N_+\times(0,\infty),\\
    \partial_tu&=\partial_{x_N}u,\quad& &\textrm{on}\quad& &\partial\R^N_+\times(0,\infty),\\
    u(\cdot,0)&=\kappa\mu& &\textrm{on}\quad& &\partial\R^N_+.
    \end{aligned}\right.
  \]
  (See also \cites{FIK-1,FIK0}.) Independently, Bidaut--V\'eron, Hoang, Nguyen, and V\'eron \cite{BHNV} considered the solvability of problem~\eqref{eq:SEBVP} in the case $\Omega$ is either a bounded domain or the half-space. They gave a necessary and sufficient condition on a nonnegative nontrivial Radon measure $\mu$, admitting singularities, on $\partial\Omega$ for problem \eqref{eq:SEBVP} to possess a solution if $\kappa$ is small enough. Their results are described in words of capacity. Their method is an application of solvability theory for general nonlinear integral equations. (See e.g., \cite{KV}. See also e.g., \cite{FIK2} for other treatments of singular boundary data.) We note here that these results show that global structure of the domain $\Omega$ affects the existence/nonexistence of semilinear elliptic equations. Inspired by these studies, we aim to investigate how global structure of $\Omega$ affects the critical existence (iii) and multiplicity (iv).

  We next explain former studies concerning the existence in the critical case (iii) and the multiplicity (iv). Ai and Zhu \cite{AZ} proved the existence of a constant $\kappa^*>0$ with properties (i)--(iv) with $\kappa_*=0$ for a boundary value problem for a nonlinear scalar field equation
    \begin{equation}\label{eq:SFE}
  \left\{\begin{aligned}
    -\Delta u+u&=u^p\quad & &\text{in}\quad& &\R^N_+,\\
    u&>0\quad& &\textrm{in}\quad& &\R^N_+,\\
    u&=\kappa\mu& &\textrm{on}\quad& &\partial\R^N_+,
    \end{aligned}\right.
  \end{equation}
  with $1<p<p_S$ and $\mu\in H^{1/2}(\partial\R^N_+)\cap L^\infty(\partial\R^N_+)$. Subsequently, the author of this paper \cite{K} extended their result to supercritical case. He proved that under the condition $1<p<p_{JL}$ and an appropriate assumption on $\mu$, there exist constants $0\le\kappa_*<\kappa^*$ with properties (i)--(iv) for the problem \eqref{eq:SFE}. Here $p_{JL}$ is the so-called Joseph--Lundgren 
critical exponent, namely
\[
p_{JL}:=\infty\jf N\le 10,\quad p_{JL}:=1+\frac{4}{N-4-2\sqrt{N-1}}\jf N\ge 11.
\]
See also e.g., \cites{FW, IK, IOS01, NS01} and references therein for related results.

  Unfortunately, for the problem \eqref{eq:SEBVP} with $\Omega$ unbounded, no results concerning properties (iii) and (iv) are known even in the subcritical case $1<p<p_S$. We note that a lack of compactness conditions and a slow decay of the Green function for $-\Delta$ prevent us from applying methods in the aforementioned studies for problem~\eqref{eq:SFE}. The main difficulty arises in a possibility for solutions to be \textit{slow-decay}, that is
  \[
  \limsup_{|x|\to\infty}|x|^{\frac{2}{p-1}}u(x)>0,
  \]
  which makes it difficult to study the critical existence and the multiplicity of solutions (see Remark~\ref{Rmk:cptdecay}). 
  
  Ishige and the author of this paper \cite{IK} considered the Lane--Emden equation with a forcing term
  \begin{equation}\label{eq:Fterm}
  -\Delta u=u^p+\kappa\mu\jn\R^N,\quad u>0\jn\R^N,
  \end{equation}
  which involves similar difficulties concerning decays of solutions to that of problem \eqref{L}. They used the Kelvin transform to obtain decay estimates of solutions and find a sufficient condition for the exponent $p$ to obtain the property (iii). Although this method can be applied to problem \eqref{L}, it is not appropriate for our purpose since it cannot involve differences of the domains.
  
  In this paper, we introduce new decay estimates of solutions (see Lemmas~\ref{Lem:NRG2}~and~\ref{Lem:Reg2}) to obtain the fast-decay property of solutions. Furthermore, these calculations provide a new-type condition for the exponent $p$ (see Theorem~\ref{Thm:Thm2} and \eqref{eq:Hq}). We elucidate here that our new condition for assertions (iii) and (iv) is wider than that for the problem \eqref{eq:Fterm} (see e.g., \cite{IK}), and strongly depends on the shape of the domain $\Omega$. In this sense, our results reveal an aspect of how the Dirichlet boundary condition and global structure of the domain affect structures of solutions to semilinear elliptic equations.

  In what follows, we always denote by $\Lambda$, $\gamma$, and $p^*_\gamma$ the constants defined in \eqref{eq:Lambda}, \eqref{eq:gamma}, and \eqref{eq:pg}, respectively. We also fix constants $\alpha$ and $\beta$ so that
  \begin{equation}\label{eq:abcond}
      -N+2-\gamma<\beta<-\frac{2}{p-1}<\alpha<\gamma.
    \end{equation}
  To state our results, we clarify our concept of solutions to problem~\eqref{L}.
  \begin{Definition}\label{Def:sols}
    Let $p>p^*_\gamma$, $\kappa>0$, and $\mu\in C(\partial\Omega\setminus\{0\})$ be nonnegative and non-zero.
    \begin{enumerate}[label={\rm(\roman*)}]
      \item We call $u\in C^2(\Omega)\cap C(\overline{\Omega}\setminus\{0\})$ a solution to problem~\eqref{L} if and only if $u$ satisfies
      \begin{equation}\label{eq:solrestrict}
        \limsup_{|x|\to+0}|x|^{-\alpha}u(x)<\infty,\quad \limsup_{|x|\to\infty}|x|^{-\beta}u(x)<\infty,
      \end{equation}
and \eqref{L} in pointwise sense.
      \item We call $u\in C^2(\Omega)\cap C(\overline{\Omega}\setminus\{0\})$ a supersolution to problem~\eqref{L} if and only if $u$ satisfies \eqref{eq:solrestrict} and
      \[
      -\Delta u\ge u^p\jn\Omega,\quad u>0\jn\Omega,\quad u\ge \kappa\mu\on\partial\Omega.
      \]
      \item We call $u\in C^2(\Omega)\cap C(\overline{\Omega}\setminus\{0\})$ a minimal solution to problem~\eqref{L} if and only if $u$ is a solution to problem~\eqref{L} and satisfies $u\le v$ in $\Omega$ for all solutions $v$ to problem~\eqref{L}.
    \end{enumerate}
  \end{Definition}
   The condition \eqref{eq:solrestrict} is equivalent to $u\in C_{\alpha,\beta}$, where
  \begin{gather}
  C_{\alpha,\beta}:=\left\{f\in C(\overline{\Omega}\setminus\{0\}):\ \|f\|_{C_{\alpha,\beta}}:=\sup_{x\in\Omega}U_{\alpha,\beta}(x)^{-1}|f(x)|<\infty\right\},\label{eq:Cab}\\
  U_{\alpha,\beta}(x):=|x|^\alpha(1+|x|^2)^\frac{\beta-\alpha}{2}.\label{eq:Uab}
  \end{gather}
  Note that $C_{\alpha,\beta}\subset L^q(\Omega)$ if and only if $\alpha>-N/q$ and $\beta<-N/q$ for $q\in[1,\infty)$. We remark that a minimal solution to problem~\eqref{L} is unique. In what follows, we denote by $u^\kappa$ the minimal solution to problem~\eqref{L} if it exists.
  
  Now we are ready to state our results. Our first result concerns the existence of a threshold constant $\kappa^*$ for the existence/nonexistence of solutions to problem \eqref{L}. 
  \begin{Theorem}\label{Thm:Thm1}
    Assume that $p>p^*_\gamma$ and that a nonnegative non-zero function $\mu\in C(\partial\Omega\setminus\{0\})$ satisfies 
    \[
    \limsup_{|x|\to+0}|x|^{-\alpha}\mu(x)<\infty,\quad\limsup_{|x|\to\infty}|x|^{-\beta}\mu(x)<\infty.
    \]
    Then there exists a constant $\kappa^*>0$ with following properties.
    \begin{enumerate}[label={\rm(\roman*)}]
      \item If $0<\kappa<\kappa^*$, then problem \eqref{L} possesses a minimal solution.
      \item If $\kappa>\kappa^*$, then problem \eqref{L} possesses no solutions.
    \end{enumerate}
  \end{Theorem}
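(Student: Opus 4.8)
The plan is to run the classical sub-/super-solution scheme in the weighted space $C_{\alpha,\beta}$, combined with a threshold argument. First I would record the linear ingredients: writing $\mathcal G_\Omega$ for the solution operator of $-\Delta v=f$ in $\Omega$ with $v=0$ on $\partial\Omega\setminus\{0\}$, and $\mathcal P_\Omega$ for that of $-\Delta v=0$ in $\Omega$ with $v=g$ on $\partial\Omega\setminus\{0\}$, I need (a) positivity of both kernels; (b) the gain estimates $\mathcal G_\Omega[U_{\alpha,\beta}^{\,p}]\le C_1U_{\alpha,\beta}$ and $\mathcal P_\Omega[g]\le C_2U_{\alpha,\beta}$ whenever $0\le g\le U_{\alpha,\beta}|_{\partial\Omega}$; and (c) a comparison principle in $C_{\alpha,\beta}$: if $w\in C_{\alpha,\beta}$, $-\Delta w\ge0$ in $\Omega$, and $w\ge0$ on $\partial\Omega\setminus\{0\}$, then $w\ge0$ in $\Omega$. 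Point (b) is exactly where \eqref{eq:abcond} is used: near the vertex $\mathcal G_\Omega$ turns a source $\sim|x|^{\alpha p}$ into $\sim|x|^{\alpha p+2}$, which is $\lesssim|x|^\alpha$ precisely because $\alpha>-2/(p-1)$; near infinity it turns $\sim|x|^{\beta p}$ into $\sim|x|^{\beta p+2}\lesssim|x|^\beta$ precisely because $\beta<-2/(p-1)$; and $\alpha,\beta\in(-(N-2)-\gamma,\gamma)$ keeps $\mathcal P_\Omega$ within the $U_{\alpha,\beta}$-profile. Since every (super)solution $u$ obeys $|u|\le\|u\|_{C_{\alpha,\beta}}U_{\alpha,\beta}$, uniqueness for the linear problem in $C_{\alpha,\beta}$ then yields $u=\mathcal G_\Omega[u^p]+\kappa\mathcal P_\Omega[\mu]$ for any solution $u$, and $u\ge\mathcal G_\Omega[u^p]+\kappa\mathcal P_\Omega[\mu]$ for any supersolution.

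Next I would set $\mathcal K:=\{\kappa>0:\eqref{L}\text{ admits a supersolution}\}$ and $\kappa^*:=\sup\mathcal K$. As $\kappa\mu\le\kappa_1\mu$ for $0<\kappa<\kappa_1$, any supersolution for $\kappa_1$ is a supersolution for $\kappa$, so $\mathcal K$ is an interval of the form $(0,\kappa^*)$ or $(0,\kappa^*]$; thus it remains to prove $0<\kappa^*<\infty$ and to produce a minimal solution for $\kappa<\kappa^*$. To see $\kappa^*>0$, put $W:=\mathcal P_\Omega[\mu]$ and $Z:=\mathcal G_\Omega[U_{\alpha,\beta}^{\,p}]$; by (a),(b) and the hypothesis on $\mu$ we get $0\le W,Z\le C\,U_{\alpha,\beta}$ with $W,Z>0$ in $\Omega$, so $\bar u_\kappa:=\kappa(W+Z)$ lies in $C_{\alpha,\beta}$, equals $\kappa\mu$ on $\partial\Omega\setminus\{0\}$, and, since $\bar u_\kappa\le\kappa(C_1+C_2)U_{\alpha,\beta}$, satisfies $-\Delta\bar u_\kappa=\kappa U_{\alpha,\beta}^{\,p}\ge\bar u_\kappa^{\,p}$ once $\kappa^{p-1}(C_1+C_2)^p\le 1$; hence $\kappa^*>0$.

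To construct a minimal solution for fixed $\kappa\in(0,\kappa^*)$, I would pick $\kappa_1\in(\kappa,\kappa^*)\cap\mathcal K$ with a supersolution $\bar u$ (also a supersolution for $\kappa$), take $u_0:=\kappa W$ (a subsolution with $u_0\le\bar u$ by (c)), and iterate $u_{n+1}:=\mathcal G_\Omega[u_n^{\,p}]+\kappa W$. Using positivity of $\mathcal G_\Omega$, monotonicity of $t\mapsto t^p$, and (c), induction gives $u_0\le u_1\le\cdots\le u_n\le\bar u\le C\,U_{\alpha,\beta}$; the pointwise limit $u^\kappa$ lies in $C_{\alpha,\beta}$, solves the integral equation $u^\kappa=\mathcal G_\Omega[(u^\kappa)^p]+\kappa W$ by monotone convergence, and is a classical solution of \eqref{L} with $u^\kappa\ge u_0>0$ after an elliptic-regularity bootstrap. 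Minimality follows since any solution $v$ obeys $v=\mathcal G_\Omega[v^p]+\kappa W\ge u_0$, hence, inductively, $v\ge u_{n+1}$, so $v\ge u^\kappa$. This is assertion (i). For (ii) I must show $\kappa^*<\infty$. Since $\mu\not\equiv0$ is continuous and nonnegative, choose $\xi\in\partial\Omega\setminus\{0\}$ and $0<\rho<|\xi|$ with $\mu\ge c_0>0$ on $\partial\Omega\cap B(\xi,\rho)$; on the \emph{bounded} domain $\Omega':=\Omega\cap B(\xi,\rho)$ let $h$ be the harmonic function with $h=c_0$ on $\partial\Omega\cap B(\xi,\rho)$ and $h=0$ on $\Omega\cap\partial B(\xi,\rho)$, which is positive in $\Omega'$, and fix $B_0\Subset\Omega'$ with $h\ge\delta_0>0$ on $B_0$. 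If $\bar u$ is a supersolution of \eqref{L}, then $-\Delta\bar u\ge0$ in $\Omega'$ and $\bar u\ge\kappa h$ on $\partial\Omega'$, so the maximum principle on $\Omega'$ gives $\bar u\ge\kappa\delta_0$ on $B_0$, whence
\[
-\Delta\bar u\ge\bar u^{\,p}=\bar u^{\,p-1}\bar u\ge(\kappa\delta_0)^{p-1}\bar u\quad\text{and}\quad \bar u>0\quad\text{in }B_0 .
\]
The classical fact that a positive supersolution of $-\Delta w=\lambda w$ on $B_0$ with $w\ge0$ on $\partial B_0$ forces $\lambda\le\lambda_1(B_0)$ then yields $(\kappa\delta_0)^{p-1}\le\lambda_1(B_0)$, i.e.\ $\kappa\le\delta_0^{-1}\lambda_1(B_0)^{1/(p-1)}$; hence $\kappa^*<\infty$ and \eqref{L} has no solution for $\kappa>\kappa^*$.

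The routine parts are the monotone iteration, the regularity bootstrap, and the eigenvalue obstruction. The real work — which I would carry out beforehand in a preliminary section — is establishing the linear estimates for $\mathcal G_\Omega$ and $\mathcal P_\Omega$ and the comparison principle in $C_{\alpha,\beta}$ on the unbounded cone; the Green-operator gain estimate, and the way it singles out the admissible range \eqref{eq:abcond} of $\alpha$ and $\beta$, is the crux on which the whole scheme rests.
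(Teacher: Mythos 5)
Your proposal is correct, and for the existence part it runs along essentially the same lines as the paper: the linear theory you defer to a ``preliminary section'' (positivity, the gain estimates $\mathcal G_\Omega[U_{\alpha,\beta}^{\,p}]\le CU_{\alpha,\beta}$ and $\mathcal P_\Omega[\mu]\le CU_{\alpha,\beta}$, and the comparison principle in $C_{\alpha,\beta}$) is exactly the content of Proposition~\ref{Prop:Linear} and Lemmas~\ref{Lem:LinearSupsol}--\ref{eq:LinMax}, with the same exponent arithmetic singling out \eqref{eq:abcond}; your explicit supersolution $\kappa(W+Z)$ plays the role of $\delta V_{\alpha,\beta}$ in Proposition~\ref{Prop:Sec2}; and the monotone iteration and minimality argument coincide with Lemma~\ref{Lem:Supsol}. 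Where you genuinely diverge is the proof that $\kappa^*<\infty$. The paper obtains this as a corollary of the \emph{global stability inequality} \eqref{eq:stab} for minimal solutions, which requires the entire eigenvalue apparatus of Section~4 (the first eigenpair $(\lambda^\kappa,\phi^\kappa)$, the decay bootstrap showing $w^\kappa\in C_{\alpha_*,\beta_*}\cap\mathcal D^{1,2}_0(\Omega)$, and the monotonicity $w^\kappa<w^{\kappa'}$); testing \eqref{eq:stab} with $u^\kappa\ge\kappa u^1_0$ then bounds $\kappa$. Your argument instead localizes to a bounded subdomain $\Omega'=\Omega\cap B(\xi,\rho)$ away from the vertex where $\mu\ge c_0>0$, uses the classical maximum principle to push the boundary data into a ball $B_0\Subset\Omega'$, and invokes the elementary obstruction that a positive supersolution of $-\Delta w\ge\lambda w$ on $B_0$ forces $\lambda\le\lambda_1(B_0)$. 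This is sound (the only cosmetic point is that the harmonic comparison function $h$ has discontinuous boundary data at the corner $\partial\Omega\cap\partial B(\xi,\rho)$; replace $c_0$ there by a continuous bump supported in the interior of $\partial\Omega\cap B(\xi,\rho)$ and nothing changes), and it is markedly more elementary and self-contained for Theorem~\ref{Thm:Thm1} alone. The trade-off is that the paper needs the stability inequality anyway for the uniform estimates of Section~5 and for Theorem~\ref{Thm:Thm2}, so deriving $\kappa^*<\infty$ from it costs nothing extra there, whereas your route would leave that machinery to be built separately later.
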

  
  %(1.4)についても説明が必要  
  Our second theorem gives a complete classification of 
the existence/nonexistence of solutions to problem~\eqref{L}, and provides a multiple existence of solutions to problem~\eqref{L}, under a suitable condition for the exponent $p$. We define a cubic polynomial $H(q)$ by
  \begin{equation}\label{eq:Hq}
  H(q):=-4\Lambda q^3+(N-2)(N-10)q^2-8(N-4)q+16.
  \end{equation}
  \begin{Theorem}\label{Thm:Thm2}
    Assume the same conditions as in Theorem~{\rm\ref{Thm:Thm1}}. Assume additionally that
    \begin{equation}\label{eq:pjl0}
    p<p_{JL},\quad H(p-1)<0.
    \end{equation}
    Then there exist constants $\kappa^*>0$ and $\kappa_*\in[0,\kappa^*)$ with following properties.
    \begin{enumerate}[label={\rm(\roman*)}]
      \item If $0<\kappa\le\kappa^*$, then problem~\eqref{L} possesses a minimal solution $u^\kappa$. Furthermore, if $\kappa=\kappa^*$, then $u^\kappa$ is the unique solution to problem~\eqref{L}.
      \item If $\kappa>\kappa^*$, then problem~\eqref{L} possesses no solutions.
      \item If $\kappa_*<\kappa<\kappa^*$, then problem~\eqref{L} possesses at least two solutions. In particular, there exists a solution $\overline{u}$ to problem~\eqref{L} such that $\overline{u}>u^\kappa$ in $\Omega$.
    \end{enumerate}
  \end{Theorem}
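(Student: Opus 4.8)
\section*{Proof proposal}

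The plan is to build on Theorem~\ref{Thm:Thm1}, which already produces a threshold $\kappa^*>0$ satisfying (ii) and yielding a minimal solution $u^\kappa$ for every $\kappa\in(0,\kappa^*)$; I will also use freely that $\kappa\mapsto u^\kappa$ is strictly increasing (the minimal solution at $\kappa'>\kappa$ is a supersolution of \eqref{L} at $\kappa$, and the ordering is strict because $\mu\not\equiv0$ and the strong maximum principle applies). The content not covered by Theorem~\ref{Thm:Thm1} is: (a) existence of a solution at the critical value $\kappa=\kappa^*$; (b) its uniqueness; (c) a second solution just below $\kappa^*$. For the spectral accounting I would work with the generalized principal eigenvalue
\[
\lambda_1(\kappa):=\sup\Bigl\{\lambda\in\R:\ \exists\,\phi>0\text{ in }\Omega\text{ with }-\Delta\phi-p(u^\kappa)^{p-1}\phi\ge\lambda\phi\text{ in }\Omega\Bigr\},
\]
the bottom of the spectrum of $L_\kappa:=-\Delta-p(u^\kappa)^{p-1}$ in the natural weighted space. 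Since $u^\kappa$ arises by monotone iteration from below it is semi-stable, so $\lambda_1(\kappa)\ge0$ for $\kappa\in(0,\kappa^*)$; strict monotonicity of $u^\kappa$ in $\kappa$ together with semi-stability forces $\lambda_1(\kappa)>0$ there (if $\lambda_1(\kappa_0)=0$ for some $\kappa_0<\kappa^*$, then $\lambda_1(\kappa)<0$ for $\kappa\in(\kappa_0,\kappa^*)$, a contradiction), while $\lambda_1(\kappa^*)=0$, because $\lambda_1(\kappa^*)>0$ would make $L_{\kappa^*}$ invertible and the implicit function theorem would extend the branch past $\kappa^*$, contradicting (ii).

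For (a), set $u^{\kappa^*}:=\lim_{n\to\infty}u^{\kappa_n}$ along $\kappa_n\uparrow\kappa^*$, a monotone increasing limit; the issue is to show it is finite and lies in $C_{\alpha,\beta}$. Here I would exploit semi-stability in the integral form $\int_\Omega p(u^\kappa)^{p-1}\varphi^2\,dx\le\int_\Omega|\nabla\varphi|^2\,dx$ for $\varphi\in C_c^\infty(\Omega)$, tested with $\varphi=(u^\kappa)^{1+s}|x|^{-\tau}\eta$ for suitable exponents $s,\tau\ge0$ and a cutoff $\eta$, then integrate by parts against the equation $-\Delta u^\kappa=(u^\kappa)^p$ and invoke the sharp Hardy inequality on the cone $\Omega$, whose optimal constant is $\bigl(\gamma+\tfrac{N-2}{2}\bigr)^2=\bigl(\tfrac{N-2}{2}\bigr)^2+\Lambda$. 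A bootstrap of the resulting inequalities raises the integrability exponent of $u^\kappa$, with a $\kappa$-independent constant, up to any power, provided the range of admissible $(s,\tau)$ is nonempty --- and a computation shows this range is nonempty precisely under condition~\eqref{eq:pjl0}. Feeding these uniform integral bounds, together with the pointwise decay estimates of Lemmas~\ref{Lem:NRG2} and~\ref{Lem:Reg2} (which control $u^\kappa$ near $0$ and near $\infty$ uniformly in $\kappa$), into interior elliptic estimates yields bounds on $u^{\kappa_n}$ in $C^2_{\loc}(\Omega)$ and in $\|\cdot\|_{C_{\alpha,\beta}}$ that are uniform in $n$. Passing to the limit, $u^{\kappa^*}\in C^2(\Omega)\cap C_{\alpha,\beta}$ solves \eqref{L} at $\kappa=\kappa^*$ and, by construction, is the minimal solution; together with Theorem~\ref{Thm:Thm1} this gives (i) apart from uniqueness, and (ii).

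For uniqueness at $\kappa^*$, let $v$ be any solution of \eqref{L} at $\kappa=\kappa^*$ and put $w:=v-u^{\kappa^*}$. Minimality gives $w\ge0$, and if $w\not\equiv0$ then $w>0$ in $\Omega$ by the strong maximum principle, since $-\Delta w=v^p-(u^{\kappa^*})^p\ge0$ and $w=0$ on $\partial\Omega$. Let $\phi>0$ be the principal eigenfunction associated with $\lambda_1(\kappa^*)=0$ (obtained, if needed, as a normalized limit of principal eigenfunctions on the exhausting domains $\Omega\cap B_R$). Multiplying $-\Delta w=v^p-(u^{\kappa^*})^p$ by $\phi$ and integrating by parts over $\Omega$ --- legitimate because the decay of $\phi$ and $w$ afforded by Lemmas~\ref{Lem:NRG2} and~\ref{Lem:Reg2}, plus their vanishing on $\partial\Omega$, kill all boundary contributions at $0$ and $\infty$ --- gives
\[
\int_\Omega p\,(u^{\kappa^*})^{p-1}w\,\phi\,dx=\int_\Omega\bigl(v^p-(u^{\kappa^*})^p\bigr)\phi\,dx.
\]
Strict convexity of $t\mapsto t^p$ on $[0,\infty)$ yields $v^p-(u^{\kappa^*})^p>p(u^{\kappa^*})^{p-1}w$ wherever $w>0$, i.e.\ everywhere in $\Omega$, so the right-hand side strictly exceeds the left --- a contradiction. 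Hence $v=u^{\kappa^*}$, completing (i).

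For the multiplicity (iii) I would apply bifurcation theory at the degenerate solution $(\kappa^*,u^{\kappa^*})$. Realize solutions as fixed points $u=\mathcal S(\kappa,u)$, where $\mathcal S(\kappa,\cdot)$ sends $u$ to the solution of $-\Delta v=u^p$ in $\Omega$, $v=\kappa\mu$ on $\partial\Omega\setminus\{0\}$; the decay estimates of Lemmas~\ref{Lem:NRG2} and~\ref{Lem:Reg2} make $\mathcal S$ compact on $C_{\alpha,\beta}$, so $F(\kappa,u):=u-\mathcal S(\kappa,u)$ has $D_uF(\kappa^*,u^{\kappa^*})=I-\text{(compact)}$ Fredholm of index zero, with one-dimensional kernel $\R\phi$ (as $\lambda_1(\kappa^*)=0$ is simple). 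The transversality condition reduces to $\int_\Omega\Psi\phi\,\rho\,dx\neq0$, where $\Psi>0$ is the harmonic function on $\Omega$ with boundary data $\mu$ and $\rho>0$ is the relevant weight, which holds since $\Psi,\phi,\rho>0$; the sign of the fold is pinned down by $\int_\Omega (u^{\kappa^*})^{p-2}\phi^3\,dx>0$, again from $\phi>0$ and strict convexity. The Crandall--Rabinowitz turning-point theorem then presents the solution set near $(\kappa^*,u^{\kappa^*})$ as a $C^1$ curve $s\mapsto(\kappa(s),u(s))$ with $\kappa(0)=\kappa^*$, $\kappa'(0)=0$, $\kappa''(0)<0$ and $u(s)=u^{\kappa^*}+s\phi+o(s)$; for $s>0$ small one has $\kappa(s)<\kappa^*$ and $u(s)>u^{\kappa^*}\ge u^{\kappa(s)}$. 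Taking $\kappa_*$ to be the infimum of all $\hat\kappa\in[0,\kappa^*)$ such that \eqref{L} admits a solution $\overline u>u^\kappa$ for every $\kappa\in(\hat\kappa,\kappa^*)$, the turning point shows this set is nonempty, whence $\kappa_*\in[0,\kappa^*)$ and (iii) holds by construction. The main obstacle throughout is the uniform control of $u^\kappa$ as $\kappa\uparrow\kappa^*$ on the non-compact cone: the stability inequality by itself cannot preclude the slow-decay scenario, so the delicate point is to couple it with the sharp cone Hardy inequality and the new decay estimates in a way that closes, and this coupling is exactly what forces the arithmetic restriction~\eqref{eq:pjl0}; a secondary difficulty is installing the weighted functional framework and the spectral theory of $L_\kappa$ on $\Omega$ so that the implicit function theorem and the Crandall--Rabinowitz theorem can be applied.
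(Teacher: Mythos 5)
Your overall architecture coincides with the paper's: uniform a priori bounds for the minimal solutions obtained by combining the stability inequality with the sharp cone Hardy inequality (constant $((N-2)/2)^2+\Lambda$), whence existence at $\kappa=\kappa^*$ by a monotone limit (Proposition~\ref{Prop:Sec4}); uniqueness at $\kappa^*$ by pairing the equation for $v-u^{\kappa^*}$ against the principal eigenfunction and invoking strict convexity of $t\mapsto t^p$; and multiplicity by the Crandall--Rabinowitz turning-point theorem applied to $u\mapsto u-\kappa u^1_0-G[u_+^p]$ on $C_{\alpha,\beta}$, with compactness of $h\mapsto G[p(u^{\kappa^*})^{p-1}h]$ supplied by the decay estimates. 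One structural difference worth noting: the paper runs all weighted energy estimates on $w^\kappa=u^\kappa-u^\kappa_{j_*+1}$ rather than on $u^\kappa$ itself, since only this difference lies in $\D^{1,2}_0(\Omega)\cap C_{\alpha_*,\beta_*}$; testing with powers of $u^\kappa$ directly leaves uncontrolled boundary and cutoff contributions on the non-compact cone.

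Two steps in your bifurcation argument do not go through as written. First, you pin down the direction of the fold by claiming $\kappa''(0)<0$ with coefficient $\int_\Omega(u^{\kappa^*})^{p-2}\phi^3\,dx>0$. This requires the map $u\mapsto u_+^p$ to be $C^2$ on $C_{\alpha,\beta}$, which fails for $1<p<2$ --- and the admissible range \eqref{eq:pjl0} does contain exponents below $2$ (recall $p_{JL}\downarrow 1$ as $N\to\infty$); moreover the integral itself need not converge. The paper uses no second-order information: the bifurcation theorem only gives $\tau'(0)=0$, and the strict inequality $\kappa(s)<\kappa^*$ for $s\neq 0$ is instead deduced from $\kappa(s)\le\kappa^*$ together with the already-established uniqueness of the solution at $\kappa=\kappa^*$ and the fact that $u(s_1)\not\equiv u(s_2)$ for $s_1\neq s_2$, the latter detected by the functional $h\mapsto\int_\Omega p(u^{\kappa^*})^{p-1}\phi^{\kappa^*}h\,dx$. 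Second, the pointwise ordering $u(s)>u^{\kappa^*}$ for small $s>0$ does not follow from the expansion $u(s)=u^{\kappa^*}+s\phi^{\kappa^*}+z(s)$ with $\|z(s)\|_{C_{\alpha,\beta}}=o(s)$: since $\phi^{\kappa^*}\in C_{\alpha_*,\beta_*}$ decays strictly faster near $0$ and $\infty$ than a generic remainder measured in $C_{\alpha,\beta}$, positivity of $s\phi^{\kappa^*}+z(s)$ in those regions is not automatic. The paper sidesteps this by producing two distinct solutions $u(s_1)$, $u(s_2)$ at the same $\kappa$ via the intermediate value theorem and obtaining $\overline u>u^\kappa$ a posteriori from minimality and the strong maximum principle. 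Both defects are repairable with ingredients you already have (notably the uniqueness at $\kappa^*$), but as stated they are gaps.
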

  The condition \eqref{eq:pjl0} can be rewritten in the form
  \[
  p\in(p_1,p_{JL}),
  \]
  where $p_1$ is a root of the cubic equation $H(p-1)=0$ with $p^*_\gamma<p_1<p_S$. We point out that it remains open whether problem \eqref{L} with $\kappa=\kappa^*$ possesses a solution $u$ when either $p\ge p_{JL}$ or $H(p-1)\ge 0$.
  %説明を追加

  \begin{Remark}
    A cubic equation $H(p-1)=0$ appears in a study by Dupaigne, Farina, and Petitt \cite{DFP}*{Corollary 2} in the context of stable solutions to the Lane--Emden equation on an infinite cone
  \[
    \left\{\begin{aligned}
      -\Delta u&=|u|^{p-1}u\quad& &\textrm{in}\quad& &\Omega,\\
      u&=0& &\textrm{on}\quad& &\partial\Omega,
    \end{aligned}\right.
  \]
  although this equation plays a different role from that in \cite{DFP} in this paper.
  \end{Remark} 

  %IOSのstrategyを真似たことは書くべき　それに加えて技術的な進歩についても触れる
  Although our methods of proofs are inspired by \cite{IOS01}, we perform more delicate estimates in order to involve effects of the shape of $\Omega$. We first use a supersolution method (see Lemma~\ref{Lem:Supsol}) to prove that the set
    \begin{equation}\label{eq:Kappa}
  \mathcal{K}:=\{\kappa>0:\textrm{problem \eqref{L} possesses a solution}\}
  \end{equation}
  is of the form either $(0,\kappa^*]$ or $(0,\kappa^*)$, where
  \begin{equation}\label{eq:kappa*}
    \kappa^*:=\sup\mathcal{K}.
  \end{equation}
   Furthermore, we construct a supersolution to ensure the existence of a minimal solution to problem~\eqref{L} for small $\kappa>0$, which implies that $\kappa^*>0$ (see the proof of Proposition~\ref{Prop:Sec2}). In order to prove that $\kappa^*<\infty$, we consider a linearized eigenvalue problem
  \[
  -\Delta\phi=\lambda p(u^\kappa)^{p-1}\phi\jn\Omega,\quad \phi\in\D^{1,2}_0(\Omega),
  \]
  to prove the stability of minimal solutions, that is
  \[
  \int_\Omega p(u^\kappa)^{p-1}\psi^2\, dx\le\int_\Omega |\nabla\psi|^2\, dx\FA \psi\in\D^{1,2}_0(\Omega).
  \]
  Here, the function space $\D^{1,2}_0(\Omega)$ is the completion of $C^\infty_c(\Omega)$ with respect to the norm $\|\nabla\cdot\|_{L^2(\Omega)}$.

We use the stability of minimal solutions to give uniform boundedness and decay estimates for them and prove that $\mathcal{K}=(0,\kappa^*]$ under the conditions $1<p<p_{JL}$ and $H(p-1)<0$ (see Section~5). In order to obtain a decay estimate for $u^\kappa$, instead of the Kelvin transform method used in \cite{IK}, we pay attention to a difference of the constant in the Hardy inequality, namely
    \begin{equation}\label{eq:Hardy}
      \left(\left(\frac{N-2}{2}\right)^2+\Lambda\right)\int_\Omega|x|^{-2}\phi^2\, dx\le\int_\Omega|\nabla\phi|^2\, dx\quad\textrm{for any}\quad\phi\in\mathcal{D}^{1,2}_0(\Omega)
    \end{equation}
    (see e.g., \cite{N}*{Proposition 4.1}), while the constant in the left hand side is $((N-2)/2)^2$ in the case $\Omega=\R^N$. In order to involve this difference in estimates of minimal solutions, we introduce new weighted energy estimates for power-type weights (See Lemma~\ref{Lem:NRG2}). 

We finally use a bifurcation theorem (see e.g., \cite{CR1}) to prove the rest of Theorem~\ref{Thm:Thm2}, that is the uniqueness of a solution in the case $\kappa=\kappa^*$ and the multiplicity assertion (iii) (see Section~6).
\begin{Remark}\label{Rmk:cptdecay}
  In the proof of (iii) in Theorem~\ref{Thm:Thm2}, it is essential to show that the operator $h\mapsto G[p(u^\kappa)^{p-1}h]$ is compact on the space $C_{\alpha,\beta}$, where $G$ is the Green operator for $-\Delta$ on $\Omega$ with the Dirichlet condition (see Section~2 for the precise definition and a compactness property of $G$). This is the point where we need decay estimates for $u^\kappa$. Indeed, without decay estimates, it is possible that $\limsup\limits_{|x|\to\infty}|x|^2u^\kappa(x)^{p-1}>0$, which prevents us from obtaining the desired compactness (see Remark~\ref{Rmk:Gcpt}).
\end{Remark}

  The rest of this paper is organized as follows. In Section~2, we study some basic properties of linear Poisson equations on infinite cones as preliminaries. In Section~3, we prove the existence of a minimal solution to problem~\eqref{L} for small $\kappa>0$. In Section~4, we study linearized eigenvalue problems for minimal solutions to problem~\eqref{L}, and complete the proof of Theorem~\ref{Thm:Thm1}. In Section~5, we obtain uniform estimates for minimal solutions to problem~\eqref{L}, and prove the existence of a solution to problem~\eqref{L} with $\kappa=\kappa^*$. In Section~6, we complete the proof of Theorem~\ref{Thm:Thm2}. In Section~7, we generalize our theorems to H\'{e}non-type equations (see \eqref{H} in Section~7).

\section{Preliminaries}
  In what follows, we always denote by $U_{\alpha,\beta}$ and $C_{\alpha,\beta}$ the function and the space defined in \eqref{eq:Uab} and \eqref{eq:Cab}, respectively. In this section, we summarize some properties of linear Poisson equations on infinite cones.
    \begin{Proposition}\label{Prop:Linear}
    Let $-N+2-\gamma<\beta<\alpha<\gamma$, $\tau\in(0,1)$, $f\in C^{0,\tau}_{\rm{loc}}(\Omega)\cap C_{\alpha-2,\beta-2}$, and $\mu\in C(\partial\Omega\setminus\{0\})$ be such that $\sup\limits_{x\in\partial\Omega\setminus\{0\}}U_{\alpha,\beta}(x)^{-1}|\mu(x)|<\infty$. Then the linear elliptic problem
    \begin{equation}\label{eq:Linear}
      -\Delta v=f\quad\textrm{in}\quad \Omega,\quad v=\mu\quad\textrm{on}\quad\partial\Omega\setminus\{0\}
    \end{equation}
    possesses a unique solution $v\in C^2(\Omega)\cap C_{\alpha,\beta}$. Furthermore, the following properties hold.
    \begin{enumerate}[label={\rm(\roman*)}]
      \item there exists a constant $C$, depending on $\alpha$, $\beta$, $N$, and $\Lambda$, such that
      \begin{equation}\label{eq:LinEsti}
      \|v\|_{C_{\alpha,\beta}}\le C\left(\|f\|_{C_{\alpha-2,\beta-2}}+\sup_{\partial\Omega}U_{\alpha,\beta}(x)^{-1}|\mu(x)|\right).
      \end{equation}
      \item If $f\ge 0$ in $\Omega$ and $\mu\ge 0$ on $\partial\Omega\setminus\{0\}$, then $v\ge 0$ in $\Omega$.
    \end{enumerate}
  \end{Proposition}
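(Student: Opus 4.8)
The proof combines a barrier (comparison) argument with an exhaustion of $\Omega$ by bounded truncated cones and a Phragm\'en--Lindel\"of argument.

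\emph{Construction of barriers.} In polar coordinates $x=r\theta$ one has, for any $s\in\R$ and $\chi\in C^2(\overline A)$, the identity $-\Delta(r^s\chi(\theta))=r^{s-2}\bigl(-\Delta'\chi-s(s+N-2)\chi\bigr)$ in $\Omega$. The function $s\mapsto s(s+N-2)$ is an upward parabola which, by \eqref{eq:gamma}, equals $\Lambda$ exactly at $s=\gamma$ and at $s=-(N-2+\gamma)$; hence $s(s+N-2)<\Lambda$ for every $s\in(-N+2-\gamma,\gamma)$, in particular for $s\in\{\alpha,\beta\}$. For such an $s$, using continuity and strict monotonicity of the first Dirichlet eigenvalue under domain perturbation, pick a smooth $A_s$ with $\overline A$ compactly contained in $A_s\subseteq S^{N-1}$ and first eigenvalue $\nu_s\in(s(s+N-2),\Lambda)$; its positive eigenfunction $\chi_s$ then satisfies $\chi_s>0$ on $\overline A$ and $-\Delta(r^s\chi_s(\theta))=(\nu_s-s(s+N-2))r^{s-2}\chi_s(\theta)\ge c_s r^{s-2}$ in $\Omega$ with $c_s>0$. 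Now set $\Phi:=\min\bigl(a\,r^\alpha\chi_\alpha(\theta),\,b\,r^\beta\chi_\beta(\theta)\bigr)$, choosing $a,b>0$ so that the first argument is the smaller one for $|x|$ small and the second for $|x|$ large (possible since $\alpha>\beta$, so $r^{\alpha-\beta}$ is increasing and the two regions are half-lines in $r$). Then $\Phi\in C(\overline\Omega\setminus\{0\})$ is positive on $\overline\Omega\setminus\{0\}$, superharmonic in $\Omega$, and a routine verification gives $C^{-1}U_{\alpha,\beta}\le\Phi\le C\,U_{\alpha,\beta}$ on $\overline\Omega\setminus\{0\}$ and $-\Delta\Phi\ge C^{-1}U_{\alpha-2,\beta-2}$ in $\Omega$, for a suitable constant $C=C(N,\Lambda,\alpha,\beta)$.

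\emph{Existence, the estimate, and positivity.} For each integer $k\ge1$ put $\Omega_k:=\Omega\cap\{k^{-1}<|x|<k\}$ and solve, by standard theory on this bounded Lipschitz domain, $-\Delta v_k=f$ in $\Omega_k$, $v_k=\mu$ on $\partial\Omega\cap\{k^{-1}<|x|<k\}$, $v_k=0$ on $\overline\Omega\cap(\{|x|=k^{-1}\}\cup\{|x|=k\})$. With $M:=\|f\|_{C_{\alpha-2,\beta-2}}$, $M':=\sup_{\partial\Omega\setminus\{0\}}U_{\alpha,\beta}^{-1}|\mu|$ and $\lambda:=C\max(M,M')$, the function $\lambda\Phi$ satisfies $-\Delta(\lambda\Phi)\ge\max(M,M')U_{\alpha-2,\beta-2}\ge|f|$ in $\Omega_k$ and dominates $\pm v_k$ on $\partial\Omega_k$ (on the lateral part since $\lambda\Phi\ge C^{-1}\lambda U_{\alpha,\beta}\ge M'U_{\alpha,\beta}\ge|\mu|=|v_k|$, on the spherical caps since $\lambda\Phi>0=v_k$); so the maximum principle gives $|v_k|\le\lambda\Phi\le C^2\max(M,M')\,U_{\alpha,\beta}$, uniformly in $k$, and if $f\ge0$, $\mu\ge0$ then comparison of $v_k$ with $0$ gives $v_k\ge0$. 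Interior Schauder estimates (which use $f\in C^{0,\tau}_{\loc}(\Omega)$) together with boundary estimates along the smooth set $\partial\Omega\setminus\{0\}$ are uniform on sets bounded away from $0$ and $\infty$, so a subsequence of $(v_k)$ converges in $C^2_{\loc}(\Omega)$ and uniformly on compact subsets of $\overline\Omega\setminus\{0\}$ to some $v\in C^2(\Omega)\cap C(\overline\Omega\setminus\{0\})$ solving \eqref{eq:Linear}, with $|v|\le C^2\max(M,M')\,U_{\alpha,\beta}$ and $v\ge0$ when $f,\mu\ge0$. This proves existence, the bound \eqref{eq:LinEsti}, and (ii).

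\emph{Uniqueness.} If $v_1,v_2\in C^2(\Omega)\cap C_{\alpha,\beta}$ both solve \eqref{eq:Linear}, then $w:=v_1-v_2$ is harmonic in $\Omega$, vanishes on $\partial\Omega\setminus\{0\}$, and $|w|\le C_0U_{\alpha,\beta}$ with $C_0:=\|v_1\|_{C_{\alpha,\beta}}+\|v_2\|_{C_{\alpha,\beta}}$. Choosing $\alpha'\in(\alpha,\gamma)$, $\beta'\in(-N+2-\gamma,\beta)$ and applying the construction above with exponents $\alpha',\beta'$, form the positive superharmonic function $b:=r^{\alpha'}\chi_{\alpha'}(\theta)+r^{\beta'}\chi_{\beta'}(\theta)$. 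For $0<\rho<1<R$ there is $\delta(\rho,R)=\mathrm{const}\cdot(\rho^{\alpha-\beta'}+R^{\beta-\alpha'})$ with $\delta(\rho,R)b\ge|w|$ on $(\{|x|=\rho\}\cup\{|x|=R\})\cap\overline\Omega$ — here one uses $\alpha-\beta'>0$ and $\alpha'-\beta>0$, which hold because $\beta'<\beta<\alpha<\alpha'$ — while $\delta(\rho,R)b\ge0=w$ on the lateral boundary. The maximum principle on $\Omega\cap\{\rho<|x|<R\}$ gives $|w|\le\delta(\rho,R)b$ there, and letting $\rho\to0$, $R\to\infty$ with $x$ fixed forces $w(x)=0$. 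Thus the solution is unique, and \eqref{eq:LinEsti} and (ii) therefore hold for it.

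The delicate step is the barrier construction: the monomials $|x|^\alpha$, $|x|^\beta$ cease to be superharmonic once the exponent is positive, so they must be twisted by eigenfunctions of slightly enlarged spherical subdomains, and the possibility of doing so is precisely the inequality $s(s+N-2)<\Lambda$ — that is, the hypothesis $-N+2-\gamma<\beta<\alpha<\gamma$; the same hypothesis, with the extra room afforded by $\alpha'$ and $\beta'$, drives the Phragm\'en--Lindel\"of argument, which in turn transfers the a priori estimate and the positivity to the unique solution.
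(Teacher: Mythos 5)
Your proof is correct, but it follows a genuinely different route from the paper's. The paper builds a single \emph{smooth} global barrier $V_{\alpha,\beta}(x)=U_{\alpha,\beta}(x)\widetilde\phi(x/|x|)$ in Lemma~\ref{Lem:LinearSupsol}, where $\widetilde\phi$ solves the auxiliary inhomogeneous problem $-\Delta'\widetilde\phi=M\widetilde\phi+1$ in $A$, $\widetilde\phi=1$ on $\partial A$, with $M=\max\{\alpha(\alpha+N-2),\beta(\beta+N-2)\}<\Lambda$; existence is then obtained by Perron's method directly on the unbounded cone, and the estimate and positivity follow from the Phragm\'en--Lindel\"of maximum principle of Lemma~\ref{eq:LinMax}, proved by dividing by $V_{\alpha',\beta'}$. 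You instead build the barrier as $\min\bigl(a\,r^\alpha\chi_\alpha,b\,r^\beta\chi_\beta\bigr)$ with $\chi_s$ the first eigenfunction of a slightly enlarged spherical cap, and get existence by exhaustion with truncated cones plus compactness. Both constructions rest on exactly the same inequality $s(s+N-2)<\Lambda$ for $s\in\{\alpha,\beta\}$, i.e.\ on the hypothesis $-N+2-\gamma<\beta<\alpha<\gamma$, and your uniqueness argument is the same Phragm\'en--Lindel\"of mechanism as Lemma~\ref{eq:LinMax} in additive rather than multiplicative form. The paper's version buys a globally $C^2$ barrier, so every comparison is classical; your minimum is only Lipschitz across the interface, so the inequality $-\Delta\Phi\ge C^{-1}U_{\alpha-2,\beta-2}$ must be read distributionally (or in the Perron/viscosity sense), which suffices for comparison but should be stated. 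Your version, in exchange, avoids solving an auxiliary problem on $A$ and makes the role of the exponent window fully transparent. Two small points to tidy: the boundary data you assign to $v_k$ is discontinuous at the corner circles $\partial\Omega\cap\{|x|=k^{\pm 1}\}$, so $v_k$ should be taken as a Perron solution (continuous at boundary points where the data is continuous, which is all you use for the comparison and the locally uniform limit); and the existence of the enlargement $A_s\supset\overline A$ with first eigenvalue still exceeding $s(s+N-2)$ invokes continuity of the first Dirichlet eigenvalue under domain perturbation, which deserves a word of justification or a reference.
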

  We construct a solution $v\in C^2(\Omega)\cap C_{\alpha,\beta}$ to problem \eqref{eq:Linear} by the Perron method. To this end, we first prove the following supersolution lemma and the maximum principle.
 \begin{Lemma}\label{Lem:LinearSupsol}
   Assume the same conditions as in Proposition~\rm{\ref{Prop:Linear}}. Then there exists a function $V_{\alpha,\beta}\in C^2(\Omega)\cap C_{\alpha,\beta}$ such that
   \[
   -\Delta V_{\alpha,\beta}\ge U_{\alpha-2,\beta-2}\quad\textrm{in}\quad\Omega,\quad V_{\alpha,\beta}\ge U_{\alpha,\beta}\quad\textrm{in}\quad\Omega.
   \]
 \end{Lemma}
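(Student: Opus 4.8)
The plan is to seek $V_{\alpha,\beta}$ in the separated form $V_{\alpha,\beta}(x)=c\,U_{\alpha,\beta}(x)\,g(x/|x|)$, where $c>0$ is a large constant fixed at the end and $g\in C^{2}(\overline{A})$, $g>0$ on $\overline{A}$, is to be constructed. Writing $x=r\theta$ with $r=|x|>0$, $\theta\in A$, we have $U_{\alpha,\beta}(x)=R(r)$ with $R(r):=r^{\alpha}(1+r^{2})^{(\beta-\alpha)/2}$, and the polar form of the Laplacian gives
\[
-\Delta V_{\alpha,\beta}=c\,g(\theta)\Bigl(-R''(r)-\tfrac{N-1}{r}R'(r)\Bigr)+c\,\frac{R(r)}{r^{2}}\bigl(-\Delta'g(\theta)\bigr).
\]
Hence, if $g$ satisfies $-\Delta'g\ge\lambda g$ in $A$ for a constant $\lambda$, then (using $R>0$) $-\Delta V_{\alpha,\beta}\ge c\,g(\theta)\,L_{\lambda}[R](r)$ with $L_{\lambda}[R]:=-R''-\tfrac{N-1}{r}R'+\tfrac{\lambda}{r^{2}}R$. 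Since $U_{\alpha-2,\beta-2}(x)=|x|^{-2}U_{\alpha,\beta}(x)=r^{-2}R(r)$ and $g\ge\min_{\overline{A}}g>0$, both inequalities claimed in the lemma will follow once we show $L_{\lambda}[R](r)\ge c_{3}\,r^{-2}R(r)$ for all $r>0$ and some constant $c_{3}>0$; it then suffices to take $c\ge(\min_{\overline{A}}g)^{-1}\max\{1,c_{3}^{-1}\}$.

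The core is this radial inequality. Set $h:=R'/R=\dfrac{\alpha+\beta r^{2}}{r(1+r^{2})}$ and $t:=rh=\dfrac{\alpha+\beta r^{2}}{1+r^{2}}$; then $R''/R=h'+h^{2}$, so
\[
\frac{r^{2}L_{\lambda}[R](r)}{R(r)}=-r^{2}h'-t^{2}-(N-1)t+\lambda=\bigl(-r^{2}h'-t\bigr)+\bigl(-t^{2}-(N-2)t+\lambda\bigr).
\]
A direct computation yields the identity $-r^{2}h'-t=\dfrac{2(\alpha-\beta)r^{2}}{(1+r^{2})^{2}}$, which is nonnegative since $\alpha>\beta$. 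Moreover $t=t(r)$ is a strictly decreasing bijection of $(0,\infty)$ onto $(\beta,\alpha)$, and $\Phi(s):=-s^{2}-(N-2)s+\lambda$ is concave with $\Phi(\alpha)=\lambda-\alpha(\alpha+N-2)$ and $\Phi(\beta)=\lambda-\beta(\beta+N-2)$; by concavity $\Phi(s)\ge c_{3}:=\min\{\Phi(\alpha),\Phi(\beta)\}$ for $s\in[\beta,\alpha]$. Therefore $r^{2}L_{\lambda}[R]/R\ge c_{3}$ for all $r>0$, and $c_{3}>0$ precisely when $\lambda>M:=\max\{\alpha(\alpha+N-2),\beta(\beta+N-2)\}$.

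It remains to construct $g\in C^{2}(\overline{A})$, $g>0$ on $\overline{A}$, with $-\Delta'g\ge\lambda g$ in $A$ for some $\lambda>M$. Here the hypothesis $-N+2-\gamma<\beta<\alpha<\gamma$ enters: the upward parabola $q(s)=s(s+N-2)$ takes the value $\Lambda$ at $s=\gamma$ and at $s=-(N-2)-\gamma$ by \eqref{eq:gamma}, hence $q(s)<\Lambda$ for every $s\in(-(N-2)-\gamma,\gamma)$; in particular $q(\alpha),q(\beta)<\Lambda$, i.e. $M<\Lambda$. By domain monotonicity and continuity of the first Dirichlet eigenvalue of $-\Delta'$ (for instance, with $A':=\{\theta\in S^{N-1}:\dist_{S^{N-1}}(\theta,\overline{A})<\delta\}$, which for small $\delta>0$ is a smooth subdomain of $S^{N-1}$ with $\Lambda(A')\nearrow\Lambda$ as $\delta\downarrow0$), we fix such an $A'$ with $\overline{A}\subset A'$ and $\Lambda(A')\in(M,\Lambda)$. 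Taking $\lambda:=\Lambda(A')$ and $g$ the corresponding first eigenfunction, normalized positive, we get $-\Delta'g=\lambda g$ on $A'\supset A$, and $\min_{\overline{A}}g>0$ since $\overline{A}$ is a compact subset of $A'$ and $g>0$ on $A'$. Elliptic regularity on the smooth domain $A'$ gives $g\in C^{\infty}(\overline{A})$, so $V_{\alpha,\beta}\in C^{\infty}(\Omega)\cap C(\overline{\Omega}\setminus\{0\})$ with $\|V_{\alpha,\beta}\|_{C_{\alpha,\beta}}=c\,\|g\|_{L^{\infty}(\overline{A})}<\infty$, whence $V_{\alpha,\beta}\in C^{2}(\Omega)\cap C_{\alpha,\beta}$.

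I expect the main obstacle to be establishing $L_{\lambda}[R]\ge c_{3}r^{-2}R$ uniformly for $r\in(0,\infty)$: the correct asymptotics as $r\to0$ and $r\to\infty$ (where $R\sim r^{\alpha}$ and $R\sim r^{\beta}$, producing the positive constants $\Phi(\alpha),\Phi(\beta)$) do not by themselves control the intermediate range. What makes the argument close is the nonnegativity of the remainder $-r^{2}h'-t=2(\alpha-\beta)r^{2}/(1+r^{2})^{2}$, reducing everything to the elementary concavity bound for $\Phi$; granting the inequality $M<\Lambda$, the construction of a suitable $g$ on a slightly enlarged cone is routine.
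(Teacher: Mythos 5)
Your proof is correct and follows essentially the same strategy as the paper: a separated ansatz $V_{\alpha,\beta}=U_{\alpha,\beta}(x)\times(\text{angular factor})$, a radial computation showing that the deficit of $-\Delta U_{\alpha,\beta}$ against $U_{\alpha-2,\beta-2}$ is controlled by $M=\max\{\alpha(\alpha+N-2),\beta(\beta+N-2)\}$, and the spectral gap $M<\Lambda$ guaranteed by \eqref{eq:abcond} and \eqref{eq:gamma}. The only difference is the choice of angular factor: the paper solves $-\Delta'\widetilde{\phi}=M\widetilde{\phi}+1$ in $A$ with $\widetilde{\phi}=1$ on $\partial A$ and invokes the comparison principle for $-\Delta'-M$, whereas you take the first Dirichlet eigenfunction of a slightly enlarged spherical domain $A'$ with eigenvalue in $(M,\Lambda)$ --- both are legitimate ways to exploit the same gap, yours trading the solvability/comparison argument on $A$ for domain monotonicity and continuity of the first eigenvalue.
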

 \begin{proof}
   We first observe that
   \begin{align*}
     &-\Delta U_{\alpha,\beta}(x)\\
     &=-\alpha(\alpha+N-2)|x|^{\alpha-2}(1+|x|^2)^\frac{\beta-\alpha}{2}-2\alpha(\beta-\alpha)|x|^{\alpha}(1+|x|^2)^\frac{\beta-\alpha-2}{2}\\
     &\qquad -|x|^{\alpha}\left((\beta-\alpha)N(1+|x|^2)^\frac{\beta-\alpha-2}{2}+(\beta-\alpha)(\beta-\alpha-2)|x|^2(1+|x|^2)^\frac{\beta-\alpha-4}{2}\right)\\
     &\ge -\alpha(\alpha+N-2)|x|^{\alpha-2}(1+|x|^2)^\frac{\beta-\alpha}{2}\\
     &\qquad -(\beta-\alpha)(2\alpha+N+(\beta-\alpha-2))|x|^\alpha(1+|x|^2)^\frac{\beta-\alpha-2}{2}\\
     &=-\alpha(\alpha+N-2)|x|^{\alpha-2}(1+|x|^2)^\frac{\beta-\alpha}{2}-(\beta-\alpha)(\beta+\alpha+N-2)|x|^\alpha(1+|x|^2)^\frac{\beta-\alpha-2}{2}
   \end{align*}
   for $x\in\Omega$. We used the inequality $(\beta-\alpha)(\beta-\alpha-2)>0$, which follows from $\beta-\alpha<0$, in the third line. This together with
   \[
   (\beta-\alpha)(\beta+\alpha+N-2)=\beta(\beta+N-2)-\alpha(\alpha+N-2)
   \]
   implies that,
   \[
     -\Delta U_{\alpha,\beta}(x)\ge -\alpha(\alpha+N-2)|x|^{\alpha-2}(1+|x|^2)^\frac{\beta-\alpha}{2}
   \]
   if $\alpha(\alpha+N-2)\ge\beta(\beta+N-2)$, and
   \begin{align*}
    &-\Delta U_{\alpha,\beta}(x)\\
    &\ge-\alpha(\alpha+N-2)|x|^{\alpha-2}(1+|x|^2)^\frac{\beta-\alpha}{2}-(\beta-\alpha)(\beta+\alpha+N-2)|x|^{\alpha-2}(1+|x|^2)^\frac{\beta-\alpha}{2}\\
    &=-\beta(\beta+N-2)|x|^{\alpha-2}(1+|x|^2)^\frac{\beta-\alpha}{2}
   \end{align*}
   otherwise. Hence, it follows that
   \[
   -\Delta U_{\alpha,\beta}(x)\ge-MU_{\alpha-2,\beta-2}(x)\quad\textrm{for}\quad x\in\Omega,\quad M:=\max\left\{\alpha(\alpha+N-2),\beta(\beta+N-2)\right\}.
   \]
   We note that $\gamma(N-2+\gamma)=\Lambda$ and $-N+2-\gamma<\beta<\alpha<\gamma$ imply that $M<\Lambda$.
   
   Since $M<\Lambda$, the elliptic problem
   \[
   -\Delta'\widetilde{\phi}=M\widetilde{\phi}+1\jn A,\quad \widetilde{\phi}=1\on \partial A,
   \]
 possesses a unique solution $\widetilde{\phi}\in C^2(A)\cap C(\overline{A})$.  Furthermore, since the comparison principle is available for the operator $-\Delta'-M$ on $A$, we obtain $\widetilde{\phi}\ge 1$ in $A$. We set
   \[
   V_{\alpha,\beta}(x):=U_{\alpha,\beta}(x)\widetilde{\phi}(\theta),\quad \theta:=\frac{x}{|x|}\in A.
   \]
   It clearly holds that $V_{\alpha,\beta}\in C^2(\Omega)\cap C_{\alpha,\beta}$ and $V_{\alpha,\beta}\ge U_{\alpha,\beta}$. We compute
   \[
     -\Delta V_{\alpha,\beta}(x)=-\Delta U_{\alpha,\beta}(x)\widetilde{\phi}(\theta)+|x|^{-2}U_{\alpha,\beta}(x)(M\widetilde{\phi}(\theta)+1)\ge U_{\alpha-2,\beta-2}(x)
   \]
   for $x\in\Omega$, and complete the proof of Lemma~\ref{Lem:LinearSupsol}.
 \end{proof}

\begin{Lemma}\label{eq:LinMax}
  If $v\in C_{\alpha,\beta}$ is a superharmonic function in Perron sense (see e.g., \cite{GT}*{Sections 2.8 and 6.3}) with $v\ge 0$ on $\partial\Omega\setminus\{0\}$, then $v\ge 0$ in $\Omega$.
\end{Lemma}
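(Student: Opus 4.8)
The plan is to argue by contradiction, exploiting the supersolution $V_{\alpha,\beta}$ from Lemma~\ref{Lem:LinearSupsol} together with the decay of $v$ encoded in the weight $U_{\alpha,\beta}$. Suppose that $v(x_0)<0$ for some $x_0\in\Omega$. Since $v\in C_{\alpha,\beta}$ and $V_{\alpha,\beta}\ge U_{\alpha,\beta}$ with $V_{\alpha,\beta}>0$ in $\Omega$, the ratio $v/V_{\alpha,\beta}$ is a continuous function on $\overline{\Omega}\setminus\{0\}$ which tends to a nonnegative value (in fact, the ratio stays bounded, and we only need to control its sign near the origin, near infinity, and along $\partial\Omega\setminus\{0\}$). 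First I would fix $\varepsilon>0$ and consider $v_\varepsilon:=v+\varepsilon V_{\alpha,\beta}$. Because $-\Delta V_{\alpha,\beta}\ge U_{\alpha-2,\beta-2}>0$, the function $V_{\alpha,\beta}$ is superharmonic, hence so is $v_\varepsilon$ in the Perron sense; and $v_\varepsilon\ge 0$ on $\partial\Omega\setminus\{0\}$.

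The key step is to rule out a negative interior minimum of $v_\varepsilon$ by a suitable exhaustion of $\Omega$. For $0<\rho<R$, set $\Omega_{\rho,R}:=\Omega\cap\{\rho<|x|<R\}$, whose boundary consists of the lateral part $(\partial\Omega\setminus\{0\})\cap\{\rho\le|x|\le R\}$, where $v_\varepsilon\ge 0$, and the two spherical caps $\Omega\cap\{|x|=\rho\}$ and $\Omega\cap\{|x|=R\}$. On the inner cap, $|v(x)|\le\|v\|_{C_{\alpha,\beta}}U_{\alpha,\beta}(x)\le\|v\|_{C_{\alpha,\beta}}\rho^\alpha$ for $\rho\le 1$ (using $\alpha>0$ is not needed; one only needs $U_{\alpha,\beta}(x)\le C\rho^\alpha$ there), while $V_{\alpha,\beta}(x)\ge U_{\alpha,\beta}(x)\ge c\,\rho^\alpha$ by the lower bound $\widetilde\phi\ge 1$; hence $v_\varepsilon\ge(\varepsilon c-\|v\|_{C_{\alpha,\beta}})\rho^\alpha>0$ once $\rho$ is small, since... wait, this needs $\alpha$-powers to match, which they do because both $v$ and $V_{\alpha,\beta}$ scale like $|x|^\alpha$ near $0$; the point is that the \emph{same} weight $U_{\alpha,\beta}$ controls $v$ from above and $V_{\alpha,\beta}$ from below. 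Similarly, on the outer cap $|x|=R$ with $R$ large, both sides scale like $|x|^\beta$, and again $v_\varepsilon>0$ there for $R$ large. Thus for suitable $\rho$ small and $R$ large, $v_\varepsilon\ge 0$ on all of $\partial\Omega_{\rho,R}$; by the minimum principle for superharmonic functions on the bounded domain $\Omega_{\rho,R}$ (Perron sense, cf.\ \cite{GT}*{Section~2.8}), $v_\varepsilon\ge 0$ in $\Omega_{\rho,R}$. Exhausting $\Omega$ by letting $\rho\to+0$ and $R\to\infty$ gives $v_\varepsilon\ge 0$ in $\Omega$, and then letting $\varepsilon\to+0$ yields $v\ge 0$ in $\Omega$.

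The main obstacle I anticipate is the bookkeeping at the two ends: one must verify that the threshold radii $\rho=\rho(\varepsilon)$ and $R=R(\varepsilon)$ can indeed be chosen so that $v_\varepsilon\ge 0$ on the caps, uniformly enough that the exhaustion closes. This is where the precise matching of the weight $U_{\alpha,\beta}$ (which bounds $v$ from above by definition of $C_{\alpha,\beta}$) with the lower bound $V_{\alpha,\beta}\ge U_{\alpha,\beta}$ is essential — without the factor $\widetilde\phi\ge 1$ this comparison would fail near $\partial A$. A secondary technical point is to make sure the minimum principle is being applied in a form valid for Perron-superharmonic functions that are only continuous (not necessarily $C^2$) up to the boundary of $\Omega_{\rho,R}$; this is standard (a Perron-superharmonic function lies above the harmonic function with its boundary values), so I would simply cite it rather than reprove it.
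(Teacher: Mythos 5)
Your overall strategy (perturb $v$ by $\varepsilon$ times the positive supersolution of Lemma~\ref{Lem:LinearSupsol}, apply the minimum principle on the truncated cones $\Omega\cap\{\rho<|x|<R\}$, then let $\rho\to+0$, $R\to\infty$, and finally $\varepsilon\to+0$) is the same in spirit as the paper's, which works with the quotient $v/V_{\alpha',\beta'}$ rather than the sum $v+\varepsilon V_{\alpha,\beta}$. However, there is a genuine gap at exactly the point you dismiss as ``bookkeeping at the two ends'': the cap estimates do not close. On the inner cap $|x|=\rho$ you only have $v\ge-C\|v\|_{C_{\alpha,\beta}}\,\rho^{\alpha}$ and $\varepsilon V_{\alpha,\beta}\ge\varepsilon c\,\rho^{\alpha}$, so
\[
v_\varepsilon\;\ge\;\bigl(\varepsilon c-C\|v\|_{C_{\alpha,\beta}}\bigr)\rho^{\alpha}.
\]
The prefactor is a constant independent of $\rho$; for small $\varepsilon$ it is negative, and multiplying by $\rho^{\alpha}>0$ keeps it negative for \emph{every} $\rho$. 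Shrinking $\rho$ gains nothing, because your barrier has exactly the same rate $|x|^{\alpha}$ at the origin as the a priori bound on $|v|$ coming from $v\in C_{\alpha,\beta}$ --- the comparison is scale-invariant there, and likewise at infinity with the rate $|x|^{\beta}$. Hence you cannot arrange $v_\varepsilon\ge0$ on the caps for small fixed $\varepsilon$, and the final limit $\varepsilon\to+0$ is unreachable. Note also that some such quantitative input is unavoidable: $-|x|^{\gamma}\psi(x/|x|)$ (with $\psi$ the first Dirichlet eigenfunction on $A$) is harmonic, negative in $\Omega$, and vanishes on $\partial\Omega\setminus\{0\}$; it is excluded only because $\beta<\gamma$ forbids its growth at infinity. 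A correct proof must therefore use the strict separation of $\alpha,\beta$ from the extremal exponents, which your argument never does.

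The repair is precisely the paper's device: fix auxiliary exponents $\alpha'<\alpha$ and $\beta'>\beta$ (still admissible, so that Lemma~\ref{Lem:LinearSupsol} yields $V_{\alpha',\beta'}$) and use $V_{\alpha',\beta'}$ as the barrier. Near the origin $V_{\alpha',\beta'}\gtrsim\rho^{\alpha'}$ while $|v|\lesssim\rho^{\alpha}=\rho^{\alpha-\alpha'}\cdot\rho^{\alpha'}$ with $\rho^{\alpha-\alpha'}\to0$, so for each fixed $\varepsilon>0$ the barrier dominates on the inner cap once $\rho$ is small; symmetrically, $R^{\beta-\beta'}\to0$ handles the outer cap. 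This is exactly what the paper's estimates $w\ge-C\|v\|_{C_{\alpha,\beta}}r_1^{\alpha-\alpha'}$ and $w\ge-C\|v\|_{C_{\alpha,\beta}}r_2^{\beta-\beta'}$ for the quotient $w$ accomplish. With that single modification (and your cited minimum principle for Perron-superharmonic functions on the bounded truncated cones, which is fine), your argument goes through.
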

  \begin{proof}
     Let $\alpha'<\alpha$ and $\beta'>\beta$ be such that $\alpha'>\beta'$. Let $V_{\alpha',\beta'}$ be as in Lemma~\ref{Lem:LinearSupsol}, and set
    \[
    w:=\frac{v}{V_{\alpha,\beta}}.
    \]
    Then $w$ is a supersolution to the equation
   \begin{equation}\label{eq:Perrweq}
   -\Delta w-2\frac{\nabla V_{\alpha',\beta'}}{V_{\alpha',\beta'}}\cdot\nabla w+\frac{-\Delta V_{\alpha',\beta'}}{V_{\alpha',\beta'}}w=0\quad \textrm{in}\quad \Omega
   \end{equation}
   in Perron sense. Indeed, let $B\Subset\Omega$ be a ball and $\overline{w}$ be a solution to
   \[
  -\Delta\overline{w}-2\frac{\nabla V_{\alpha',\beta'}}{V_{\alpha',\beta'}}\cdot\nabla \overline{w}+\frac{-\Delta V_{\alpha',\beta'}}{V_{\alpha',\beta'}}\overline{w}=0\quad \textrm{in}\quad B,\quad \overline{w}=w\on\partial B.
   \]
  Then a direct calculation yields that $-\Delta(V_{\alpha',\beta'}\overline{w})=0$ in $B$ and that $V_{\alpha',\beta'}\overline{w}=v$ on $\partial B$, which together with the definition of Perron supersolutions imply that $V_{\alpha',\beta'}w=v\ge V_{\alpha',\beta'}\overline{w}$ in $B$. Thus $w$ is a Perron supersolution to the equation \eqref{eq:Perrweq}.
   
   Since $-\Delta V_{\alpha',\beta'}/V_{\alpha',\beta'}>0$ in $\Omega$, it follows from the maximum principle that
   \[
   \inf_{\substack{x\in\Omega,\\ r_1<|x|<r_2}}w(x)\ge \min\left\{\inf_{\Omega\cap \partial B(0,r)}w,\inf_{\Omega\cap \partial B(0,R)}w,0\right\}
   \]
   for all $0<r_1<r_2$. Furthermore, it follows from $v\in C_{\alpha,\beta}$ that
   \begin{gather*}
   w(x)\ge -C\frac{\|v\|_{C_{\alpha,\beta}}r_1^\alpha}{r_1^{\alpha'}}=-C\|v\|_{C_{\alpha,\beta}}r_1^{\alpha-\alpha'}\quad \textrm{for}\quad x\in \Omega\cap \partial B(0,r_1),\ r_1\in(0,1),\\
   w(x)\ge -C\frac{\|v\|_{C_{\alpha,\beta}}r_2^\beta}{r_2^{\beta'}}=-C\|v\|_{C_{\alpha,\beta}}r_2^{\beta-\beta'}\quad \textrm{for}\quad x\in \Omega\cap \partial B(0,r_2),\ r_2>1.
   \end{gather*}
   Thus
   \[
   \inf_{\substack{x\in\Omega,\\ r_1<|x|<r_2}}\frac{v(x)}{V_{{\alpha',\beta'}}(x)}\ge \min\left\{-C\|v\|_{C_{\alpha,\beta}}r_1^{\alpha-\alpha'},-C\|v\|_{C_{\alpha,\beta}}r_2^{\beta-\beta'}\right\}
   \]
   for $0<r_1<1<r_2$. Letting $r_1\to+0$ and $r_2\to\infty$, we obtain $v\ge 0$ in $\Omega$ and complete the proof of Lemma~\ref{eq:LinMax}.
  \end{proof}
 
 \begin{proof}[Proof of Proposition~\rm{\ref{Prop:Linear}}]
    Assertion (ii) and the uniqueness of a solution to problem \eqref{eq:Linear} are immediate consequences of Lemma~\eqref{eq:LinMax}.
   
   We construct a solution to problem~\eqref{eq:Linear}. Let $K:=\|f\|_{C_{\alpha-2,\beta-2}}+\sup\limits_{x\in\Omega}U_{\alpha,\beta}(x)^{-1}|\mu(x)|$, and $V_{\alpha,\beta}\in C_{\alpha,\beta}$ be as in Lemma~\ref{Lem:LinearSupsol}. Then
   \begin{gather}
   -\Delta(KV_{\alpha,\beta})\ge KU_{\alpha-2,\beta-2}\ge f\jn \Omega,\quad  KV_{\alpha,\beta}\ge KU_{\alpha,\beta}\ge\mu\on \Omega\setminus\{0\},\label{eq:PerrSup}\\
   -\Delta(-KV_{\alpha,\beta})\le -KU_{\alpha-2,\beta-2}\le f\jn \Omega,\quad  -KV_{\alpha,\beta}\le -KU_{\alpha,\beta}\le\mu\on \Omega\setminus\{0\}.\label{eq:PerrSub}
   \end{gather}
   Let $\mathcal{S}$ be a set consisting of functions $w\in C_{\alpha,\beta}$ which are subsolutions to the equation $-\Delta u=f$ in $\Omega$ in Perron sense (see e.g., \cite{GT}*{Sections 2.8 and 6.3}) and satisfy $w\le\mu$ on $\partial\Omega\setminus\{0\}$. By \eqref{eq:PerrSub}, $-KV_{\alpha,\beta}$ belongs to $\mathcal{S}$. Furthermore, by \eqref{eq:PerrSup}, $KV_{\alpha,\beta}-w$ is a Perron superharmonic function on $\Omega$ and satisfies $KV_{\alpha,\beta}-w\ge 0$ for all $w\in\mathcal{S}$. This together with Lemma~\ref{eq:LinMax} implies that $w\le KV_{\alpha,\beta}$ in $\Omega$. Hence the value
   \[
     v(x):=\sup_{w\in\mathcal{S}}w(x)
   \]
  exists for each $x\in\overline{\Omega}\setminus\{0\}$. By the same argument as in \cite{GT}*{Sections 2.8 and 6.3}, $v$ belongs to $C^2(\Omega)\cap C(\overline{\Omega}\setminus\{0\})$, and $v$ is a solution to problem \eqref{eq:Linear}. Furthermore, since
   \[
   -CKU_{\alpha,\beta}\le -KV_{\alpha,\beta}\le v\le KV_{\alpha,\beta}\le CKU_{\alpha,\beta}\quad \textrm{in}\quad \Omega,
   \]
   $v$ belongs to $C_{\alpha,\beta}$, and \eqref{eq:LinEsti} holds. Thus (i) follows, and the proof of Proposition~\ref{eq:Linear} is complete.
 \end{proof}
 Let $\alpha,\beta,\tau$ be as in Proposition~\ref{Prop:Linear} and define a linear operator $G:C^{0,\tau}_{\loc}(\Omega)\cap C_{\alpha-2,\beta-2}\to C^2(\Omega)\cap C_{\alpha,\beta}$ by letting $Gf$ be a unique solution $v\in C^2(\Omega)\cap C_{\alpha,\beta}$ to problem
\[
-\Delta v=f\jn\Omega,\quad v=0\on\partial\Omega\setminus\{0\}.
\]
By \eqref{eq:LinEsti} and the fact that $C^{0,\tau}_{\loc}(\Omega)\cap C_{\alpha-2,\beta-2}$ is a dense subspace of $C_{\alpha-2,\beta-2}$, $G$ is uniquely continued to a bounded linear operator from $C_{\alpha-2,\beta-2}$ to $C_{\alpha,\beta}$, still denoted by $G$.

\begin{Proposition}\label{Prop:G}
  Let $\alpha,\beta$ be as in Proposition~{\rm\ref{Prop:Linear}}.
\begin{enumerate}[label={\rm(\roman*)}]
  \item For any $f\in C_{\alpha,\beta}$, $Gf$ belongs to $C^{1,\sigma}_{\loc}(\Omega)$ for any $\sigma\in(0,1)$.
  \item Let $\alpha'<\alpha$, $\beta'>\beta$. Then $G$ is a compact operator from $C_{\alpha-2,\beta-2}$ to $C_{\alpha',\beta'}$.
  \item Assume that $\alpha>-(N-2)/2$ and $\beta<-(N-2)/2$. Then for any $f\in C_{\alpha-2,\beta-2}$, $Gf$ belongs to $\D^{1,2}_0(\Omega)$ and solves $-\Delta Gf=f$ in $\Omega$ in the weak sense.
\end{enumerate}
\end{Proposition}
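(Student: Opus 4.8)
The plan is to treat the three assertions separately, each reducing to standard elliptic estimates once one exploits the explicit form of the weights and the elementary identity $U_{\alpha-2,\beta-2}(x)=|x|^{-2}U_{\alpha,\beta}(x)$. For (i), let $f$ lie in the domain of $G$; in particular $f\in C(\overline{\Omega}\setminus\{0\})$, so $f\in L^q_{\loc}(\Omega)$ for every $q<\infty$. First I would check that $Gf$ solves $-\Delta(Gf)=f$ in $\Omega$ in the distributional sense: $Gf$ is the $C_{\alpha,\beta}$-limit of the classical solutions $Gf_n$ with $f_n\in C^{0,\tau}_{\loc}(\Omega)\cap C_{\alpha-2,\beta-2}$, and applying interior Schauder estimates to $Gf_n-Gf_m$ upgrades this to $C^1_{\loc}(\Omega)$-convergence, which lets one pass to the limit in the weak formulation. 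Interior $W^{2,q}$ estimates then give $Gf\in W^{2,q}_{\loc}(\Omega)$ for every $q<\infty$, and Morrey's embedding yields $Gf\in C^{1,\sigma}_{\loc}(\Omega)$ for every $\sigma\in(0,1)$.

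For (ii), take a sequence $(f_n)$ bounded in $C_{\alpha-2,\beta-2}$. On every compact $K\subset\overline{\Omega}\setminus\{0\}$ the weights $U_{\alpha-2,\beta-2}$ and $U_{\alpha,\beta}$ are bounded above and below, so $\|f_n\|_{L^\infty(K)}$ and (recalling that $G$ is bounded from $C_{\alpha-2,\beta-2}$ to $C_{\alpha,\beta}$) $\|Gf_n\|_{L^\infty(K)}$ are uniformly bounded; since $\partial\Omega\setminus\{0\}$ is smooth and $Gf_n=0$ there, boundary $W^{2,q}$, hence $C^{1,\sigma}$, estimates bound $\|Gf_n\|_{C^{1,\sigma}(K)}$ uniformly in $n$. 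Exhausting $\overline{\Omega}\setminus\{0\}$ by such sets and combining Arzel\`a--Ascoli with a diagonal argument, I extract a subsequence $Gf_{n_k}$ converging in $C^1_{\loc}(\overline{\Omega}\setminus\{0\})$ to some $v$ with $|v|\le CU_{\alpha,\beta}$, so $v\in C_{\alpha,\beta}\subset C_{\alpha',\beta'}$. To see $Gf_{n_k}\to v$ in $C_{\alpha',\beta'}$, split $\Omega$ into $\{|x|<\delta\}$, $\{\delta\le|x|\le R\}$ and $\{|x|>R\}$: on the two extreme regions $U_{\alpha',\beta'}(x)^{-1}|Gf_{n_k}(x)-v(x)|\le 2CU_{\alpha',\beta'}(x)^{-1}U_{\alpha,\beta}(x)=O(\delta^{\alpha-\alpha'})+O(R^{\beta-\beta'})$, which is small because $\alpha-\alpha'>0$ and $\beta-\beta'<0$, while on the compact middle region $U_{\alpha',\beta'}$ is bounded below and $Gf_{n_k}\to v$ uniformly. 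An $\varepsilon/3$ argument then gives convergence in $C_{\alpha',\beta'}$, hence compactness of $G$.

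For (iii), observe that $\alpha>-(N-2)/2$ and $\beta<-(N-2)/2$ are exactly the conditions making $\int_\Omega|x|^{-2}U_{\alpha,\beta}^2\,dx<\infty$, and that, since $U_{\alpha-2,\beta-2}=|x|^{-2}U_{\alpha,\beta}$, the pairing $\int_\Omega fg\,dx$ for $f\in C_{\alpha-2,\beta-2}$ and $g\in\D^{1,2}_0(\Omega)$ is bounded by $\|f\|_{C_{\alpha-2,\beta-2}}\bigl(\int_\Omega|x|^{-2}U_{\alpha,\beta}^2\bigr)^{1/2}\bigl(\int_\Omega|x|^{-2}g^2\bigr)^{1/2}$, the last factor being finite by the Hardy inequality \eqref{eq:Hardy} (call this the pairing bound). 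First take $f\in C^{0,\tau}_{\loc}(\Omega)\cap C_{\alpha-2,\beta-2}$, so $v:=Gf$ is classical and, by boundary Schauder estimates, $C^1$ up to $\partial\Omega\setminus\{0\}$ with $v=0$ there; rescaling the interior/boundary gradient estimate at scale $|x|$ (the cone is self-similar, so the rescaled domains have uniformly controlled geometry at all scales) gives $|\nabla v(x)|\le C|x|^{-1}U_{\alpha,\beta}(x)$, hence $\int_\Omega|\nabla v|^2\,dx<\infty$. Multiplying $v$ by cutoffs $\psi_j$ equal to $1$ on $\{j^{-1}<|x|<j\}$, supported in $\{(2j)^{-1}<|x|<2j\}$, with $|\nabla\psi_j|\le C|x|^{-1}$, one gets $\psi_j v\in\D^{1,2}_0(\Omega)$ (Lipschitz, compactly supported in $\overline{\Omega}\setminus\{0\}$, vanishing on $\partial\Omega\setminus\{0\}$) and $\psi_j v\to v$ in $\D^{1,2}_0(\Omega)$, since $\int_\Omega|v\nabla\psi_j|^2\,dx\le C\int_{\{|x|<j^{-1}\}\cup\{|x|>j\}}|x|^{-2}U_{\alpha,\beta}^2\,dx\to0$; thus $v\in\D^{1,2}_0(\Omega)$, and testing $-\Delta v=f$ against $C^\infty_c(\Omega)$ and extending to $\D^{1,2}_0(\Omega)$ by density via the pairing bound shows $v$ is a weak solution. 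For general $f\in C_{\alpha-2,\beta-2}$, approximate by $f_n$ in the dense subspace; then $v_n:=Gf_n\to Gf$ in $C_{\alpha,\beta}$, and testing the weak equation for $v_n-v_m$ against $v_n-v_m$ gives $\|\nabla(v_n-v_m)\|_{L^2(\Omega)}^2=\int_\Omega(f_n-f_m)(v_n-v_m)\,dx\to0$ by the pairing bound; so $(v_n)$ is Cauchy in $\D^{1,2}_0(\Omega)$, its limit coincides a.e. with the locally uniform limit $Gf$, hence $Gf\in\D^{1,2}_0(\Omega)$, and passing to the limit in $\int_\Omega\nabla v_n\cdot\nabla\varphi\,dx=\int_\Omega f_n\varphi\,dx$ finishes the proof.

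I expect the main obstacle to lie in (iii), specifically in proving $Gf\in\D^{1,2}_0(\Omega)$: this forces the pointwise gradient bound $|\nabla(Gf)(x)|\lesssim|x|^{-1}U_{\alpha,\beta}(x)$, which rests on a rescaling argument that must be made uniform near the vertex and near infinity (where the self-similarity of the cone is essential), together with recognizing that the endpoint conditions $\alpha>-(N-2)/2$ and $\beta<-(N-2)/2$ are precisely what render $\int_\Omega|x|^{-2}U_{\alpha,\beta}^2\,dx$ finite, so the whole argument is held together by the identity $U_{\alpha-2,\beta-2}=|x|^{-2}U_{\alpha,\beta}$ and the Hardy inequality \eqref{eq:Hardy}. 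In (ii) the only delicate point is the three-region comparison, which is exactly where the strict inequalities $\alpha'<\alpha$ and $\beta'>\beta$ enter and which explains why boundedness of $G$ into $C_{\alpha,\beta}$ alone does not give compactness.
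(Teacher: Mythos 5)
Your proofs of (i) and (ii) follow essentially the same route as the paper: interior estimates plus a density argument for (i), and for (ii) the Arzel\`a--Ascoli extraction on annuli followed by the three-region comparison in which the strict inequalities $\alpha'<\alpha$, $\beta'>\beta$ kill the contributions near $0$ and near infinity. (One cosmetic remark on (i): interior \emph{Schauder} estimates for $Gf_n-Gf_m$ would require H\"older control of $f_n-f_m$, which you do not have; but the interior $C^{1,\sigma}$ estimate with merely bounded right-hand side, or the $W^{2,q}$ estimate plus Morrey embedding that you invoke in the next sentence, is exactly what is needed, and this is also what the paper cites.) For (iii) you take a genuinely different route. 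The paper starts from the observation, identical to yours, that $\alpha>-(N-2)/2$ and $\beta<-(N-2)/2$ make $\int_\Omega|x|^2U_{\alpha-2,\beta-2}^2\,dx$ finite so that $\varphi\mapsto\int_\Omega f\varphi\,dx$ is bounded on $\D^{1,2}_0(\Omega)$ via the Hardy inequality; it then produces a weak solution $\overline v\in\D^{1,2}_0(\Omega)$ by Lax--Milgram and identifies $\overline v$ with $Gf$ through elliptic regularity, a barrier argument showing $\overline v\in C_{\alpha,\beta}$, and the uniqueness statement of Proposition~\ref{Prop:Linear}. You instead work directly with the Perron solution $Gf$ and prove it has finite Dirichlet energy by a scale-invariant boundary gradient estimate $|\nabla Gf|\lesssim|x|^{-1}U_{\alpha,\beta}$, then exhaust by cutoffs $\psi_j$ with $|\nabla\psi_j|\lesssim|x|^{-1}$ to place $Gf$ in $\D^{1,2}_0(\Omega)$; the density step for general $f$ is recovered by testing the equation for differences against themselves, which reproduces the paper's a priori bound $\|\nabla Gf\|_{L^2}\le C\|f\|_{C_{\alpha-2,\beta-2}}$. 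Both arguments are correct. The paper's version buys simplicity (no pointwise gradient estimates, no discussion of whether truncations of a $C^1$ function vanishing on $\partial\Omega$ lie in $\D^{1,2}_0$) at the cost of an identification step resting on the uniqueness theory already built in Section~2; yours avoids that identification entirely but must carry the two technical burdens you correctly flag --- the uniformity of the rescaled boundary estimate over all scales (which the self-similarity of the cone and the smoothness of $\partial A$ do provide) and the approximation of $\psi_jGf$ by $C^\infty_c(\Omega)$ functions, both standard but worth writing out.
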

\begin{proof}
  (i) is an immediate consequence of elliptic interior estimates and a density argument (see e.g., \cite{GT}*{Theorem 3.9}).
  
  We next prove (ii). It suffices to prove that for any sequence $\{f_n\}_{n=1}^\infty\subset C^{0,\tau}_{\loc}(\Omega)\cap C_{\alpha-2,\beta-2}$ which is bounded in the norm of $C_{\alpha-2,\beta-2}$, the sequence $\{Gf_n\}_{n=1}^\infty$ possesses a subsequence which converges in the norm of $C_{\alpha',\beta'}$.

  Since $v_n:=Gf_n$ satisfies $-\Delta v_n=f_n$ in $\Omega$ and $v_n=0$ on $\partial\Omega\setminus\{0\}$, $\{v_n\}_{n=1}^\infty$ is bounded in the norm of $C_{\alpha,\beta}$, and equicontinuous on $\overline{B(0,r_2)}\setminus B(0,r_1)$ for all $0<r_1<r_2$ (see e.g., \cite{GT}*{Theorem 4.5 and Section 6.3, Remark 3}). Hence the Ascoli-Arzel\'{a} theorem implies that $\{v_n\}_{n=1}^\infty$ possesses a subsequence $\{v_{n_k}\}_{k=1}^\infty$ which converges uniformly to some $v\in C(\overline{\Omega}\setminus\{0\})$ on $\overline{B(0,r_2)}\setminus B(0,r_1)$ for all $0<r_1<r_2$. 

  Since $\sup\limits_{k}\|v_{n_k}\|_{C_{\alpha,\beta}}<\infty$, $v$ belongs to $C_{\alpha,\beta}$. Furthermore, for all $0<r_1<1<r_2$,
  \begin{align*}
    &\limsup_{k\to\infty}\|v_{n_k}-v\|_{C_{\alpha',\beta'}}\\
    &\le \limsup_{k\to\infty}\sup_{|x|<r_1}U_{\alpha',\beta'}(x)^{-1}\left(|v_{n_k}(x)|+|v(x)|\right)+\limsup_{k\to\infty}\sup_{x_1\le|x|\le r_2}U_{\alpha',\beta'}(x)^{-1}|v_{n_k}(x)-v(x)|\\
    &\qquad+\limsup_{k\to\infty}\sup_{|x|>r_2}U_{\alpha',\beta'}(x)^{-1}\left(|v_{n_k}(x)|+|v(x)|\right)\\
    &\le \sup_k\sup_{|x|<r_1}\frac{U_{\alpha,\beta}(x)}{U_{\alpha',\beta'}(x)}\left(\|v_{n_k}\|_{C_{\alpha,\beta}}+\|v\|_{C_{\alpha,\beta}}\right)+\sup_k\sup_{|x|>r_2}\frac{U_{\alpha,\beta}(x)}{U_{\alpha',\beta'}(x)}\left(\|v_{n_k}\|_{C_{\alpha,\beta}}+\|v\|_{C_{\alpha,\beta}}\right)\\
    &\le C\left(r_1^{\alpha-\alpha'}+r_2^{\beta-\beta'}\right)\left(\sup_{k}\|v_{n_k}\|_{C_{\alpha,\beta}}+\|v\|_{C_{\alpha,\beta}}\right).
  \end{align*}
    Letting $r_1\to+0$ and $r_2\to\infty$, we deduce that $\limsup\limits_{k\to\infty}\|v_{n_k}-v\|_{C_{\alpha,\beta}}=0$.

  We next prove (iii). We first prove the case $f\in C^{0,\tau}_{\loc}\cap C_{\alpha-2,\beta-2}$. It follows from the assumptions $\alpha>-(N-2)/2$ and $\beta<-(N-2)/2$ that
  \[
  2+2(\alpha-2)>-N,\quad 2+(\beta-2)<-N.
  \]
  This together with the Hardy inequality \eqref{eq:Hardy} implies that
  \[
  \left|\int_\Omega f\varphi\,dx\right|\le \|f\|_{C_{\alpha,\beta}}\left(\int_\Omega|x|^2U_{\alpha-2,\beta-2}^2\,dx\right)^\frac{1}{2}\left(\int_\Omega |x|^{-2}\varphi^2\,dx\right)^\frac{1}{2}\le C\|f\|_{C_{\alpha,\beta}}\|\nabla\varphi\|_{L^2(\Omega)}
  \]
  for any $\varphi\in \D^{1,2}_0(\Omega)$. By the Lax--Milgram theorem, the problem
  \[
  -\Delta\overline{v}=f\jn\Omega,\quad \overline{v}\in\D^{1,2}_0(\Omega)
  \]
  possesses a unique solution $\overline{v}$, and 
  \begin{equation}\label{eq:vbarest}
    \|\nabla\overline{v}\|_{L^2(\Omega)}\le C\|f\|_{C_{\alpha,\beta}}.
  \end{equation}
  Furthermore, it follows from elliptic regularity theorems (see e.g., \cite{GT}*{Theorems 4.5, 8.22, and 8.27}) that $\overline{v}\in C^2(\Omega)\cap C(\overline{\Omega}\setminus\{0\})$. It also follows from a similar barrier argument to the proof of Proposition~\ref{Prop:Linear} (ii) that $\overline{v}\in C_{\alpha,\beta}$. Hence we deduce $Gf=\overline{v}$ by the uniqueness of a solution $v\in C^2(\Omega)\cap C_{\alpha,\beta}$ to problem \eqref{eq:Linear}, and obtain (iii). The general case follows by a density argument using \eqref{eq:vbarest}. The proof of Proposition~\ref{Prop:G} is complete.
  \end{proof}
  \begin{Remark}\label{Rmk:Gcpt}
    Assertion (ii) implies that the operator $h\mapsto G[Kh]$ is compact on $C_{\alpha,\beta}$ if $-N+2-\gamma<\beta<\alpha<\gamma$ and $K\in C_{-2+\delta,-2-\delta}$ for some $\delta>0$. On the other hand, $G$ is not compact from $C_{\alpha-2,\beta-2}$ to $C_{\alpha,\beta}$. In particular, the operator $h\mapsto G[Kh]$ may not be compact on $C_{\alpha,\beta}$ if either $\limsup\limits_{|x|\to+0}|x|^2K(x)>0$ or $\limsup\limits_{|x|\to\infty}|x|^2K(x)>0$.
  \end{Remark}
  \section{Existence of solutions for small $\kappa$}
  In what follows, we always assume the same conditions as in Theorem~\ref{Thm:Thm1}. Define also the set $\mathcal{K}$ and the number $\kappa^*$ as in \eqref{eq:Kappa} and \eqref{eq:kappa*}, respectively. In this section, we prove the following proposition.
  \begin{Proposition}\label{Prop:Sec2}
    Assume the same conditions as in Theorem~{\rm\ref{Thm:Thm1}}. Then $\mathcal{K}$ equals either $(0,\kappa^*)$ or $(0,\kappa^*]$. Furthermore, $\kappa^*>0$.
  \end{Proposition}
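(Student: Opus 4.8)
The plan is to establish the two assertions of Proposition~\ref{Prop:Sec2} separately: first that $\mathcal{K}$ is an interval of the stated form (a ``monotonicity in $\kappa$'' statement plus closure of $\mathcal{K}$ under passing to smaller $\kappa$), and second that $\kappa^* = \sup\mathcal{K} > 0$, which requires producing an actual solution for all small $\kappa$. Both parts rely on the supersolution machinery: a solution to \eqref{L} exists whenever a supersolution exists, by the standard monotone iteration scheme using the Green operator $G$ from Proposition~\ref{Prop:G}, starting from $0$ and increasing; since $G$ maps $C_{\alpha-2,\beta-2}$ into $C_{\alpha,\beta}$ continuously and preserves positivity (Proposition~\ref{Prop:Linear}(ii)), the iterates $u_{n+1} := G[u_n^p] + (\text{harmonic lift of } \kappa\mu)$ stay trapped between $0$ and any given supersolution, are monotone increasing, and converge in $C_{\alpha,\beta}$ to a solution. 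This is presumably the content of the invoked ``supersolution method'' (Lemma~\ref{Lem:Supsol}, referenced in the introduction but appearing later); I would cite it rather than reprove it.

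For the interval structure: suppose $\kappa_0 \in \mathcal{K}$ and $0 < \kappa < \kappa_0$. Let $u^{\kappa_0}$ be a solution to $(\mathrm{L}_{\kappa_0})$; then $u^{\kappa_0}$ satisfies $-\Delta u^{\kappa_0} = (u^{\kappa_0})^p$ in $\Omega$ and $u^{\kappa_0} = \kappa_0\mu \ge \kappa\mu$ on $\partial\Omega\setminus\{0\}$, so $u^{\kappa_0}$ is a supersolution to $(\mathrm{L}_\kappa)$. (One checks it lies in $C_{\alpha,\beta}$, i.e.\ satisfies \eqref{eq:solrestrict}, because it is a genuine solution in the sense of Definition~\ref{Def:sols}.) Hence $(\mathrm{L}_\kappa)$ has a solution, so $\kappa \in \mathcal{K}$. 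This shows $\mathcal{K}$ is an interval containing a right neighborhood of $0$ intersected with $(0,\infty)$ — more precisely, $(0,\kappa^*) \subseteq \mathcal{K} \subseteq (0,\kappa^*]$ with $\kappa^* = \sup\mathcal{K}$ — which is exactly the claim that $\mathcal{K}$ equals $(0,\kappa^*)$ or $(0,\kappa^*]$. Here I am using that $\mathcal{K} \ne \emptyset$, which is the second assertion.

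For $\kappa^* > 0$: I need a supersolution to $(\mathrm{L}_\kappa)$ for some (hence all smaller) $\kappa > 0$. The natural candidate is $\overline{u} := t\, G[U_{\alpha-2,\beta-2}] + \kappa\, v_\mu$, or more simply a multiple of the barrier: take $w := G[U_{\alpha-2,\beta-2}]$ together with the solution $v_\mu \in C_{\alpha,\beta}$ of $-\Delta v_\mu = 0$, $v_\mu = \mu$ on $\partial\Omega\setminus\{0\}$ furnished by Proposition~\ref{Prop:Linear}, and try $\overline{u} = \lambda(w + v_\mu)$ for a small parameter $\lambda > 0$, or alternatively $\overline u = c\,U_{\alpha,\beta}$ after replacing $U_{\alpha,\beta}$ by the genuine supersolution $V_{\alpha,\beta}$ of Lemma~\ref{Lem:LinearSupsol}. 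The supersolution inequality $-\Delta \overline u \ge \overline u^{\,p}$ reduces, using $-\Delta V_{\alpha,\beta} \ge U_{\alpha-2,\beta-2}$ and $V_{\alpha,\beta} \le C U_{\alpha,\beta}$, to requiring
\[
c\,U_{\alpha-2,\beta-2}(x) \ge c^p C^p U_{\alpha,\beta}(x)^p \quad \text{for } x \in \Omega,
\]
i.e.\ to $c^{p-1} C^p U_{\alpha,\beta}(x)^p \le U_{\alpha-2,\beta-2}(x)$ uniformly in $x$. This uniform bound is where the choice \eqref{eq:abcond} of $\alpha,\beta$ enters: because $-\frac{2}{p-1}$ lies strictly between $\beta$ and $\alpha$, one has $p\alpha < \alpha - 2$ and $p\beta > \beta - 2$, so near $x=0$ the weight $U_{\alpha,\beta}^p$ behaves like $|x|^{p\alpha}$ which is dominated by $|x|^{\alpha-2}$, and near infinity $U_{\alpha,\beta}^p \sim |x|^{p\beta}$ is dominated by $|x|^{\beta-2}$; hence the ratio $U_{\alpha,\beta}^p / U_{\alpha-2,\beta-2}$ is globally bounded, and choosing $c$ small makes the inequality hold. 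On the boundary one needs $\overline u = c\,V_{\alpha,\beta} \ge \kappa\mu$, which holds for $\kappa \le c / \big(\sup_{\partial\Omega\setminus\{0\}} U_{\alpha,\beta}^{-1}|\mu|\big)$ since $V_{\alpha,\beta}\ge U_{\alpha,\beta}$. Thus for every sufficiently small $\kappa > 0$, $\overline u$ is a supersolution in the sense of Definition~\ref{Def:sols}, the monotone iteration produces a solution, so $\kappa \in \mathcal{K}$ and $\kappa^* > 0$.

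The main obstacle is not conceptual but bookkeeping: one must verify that the monotone iteration genuinely converges in the weighted space $C_{\alpha,\beta}$ (not merely pointwise) and that the limit satisfies the growth restriction \eqref{eq:solrestrict} — this is handled by the trapping $0 \le u_n \le \overline u \le c\,V_{\alpha,\beta} \le C c\, U_{\alpha,\beta}$ together with the continuity estimate \eqref{eq:LinEsti} and interior Schauder estimates (Proposition~\ref{Prop:G}(i)) to upgrade to $C^2$ — and that the global uniform bound $U_{\alpha,\beta}^p \le C\, U_{\alpha-2,\beta-2}$ is correctly extracted from \eqref{eq:abcond}. If instead the referenced Lemma~\ref{Lem:Supsol} already packages ``existence of a supersolution in $C_{\alpha,\beta}$ implies existence of a minimal solution'', then the entire proof collapses to: (a) the comparison argument above showing $\mathcal K$ is downward closed, and (b) the explicit barrier $c\,V_{\alpha,\beta}$ showing small $\kappa$ work.
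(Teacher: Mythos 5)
Your proposal is correct and follows essentially the same route as the paper: downward closure of $\mathcal{K}$ is obtained by using the solution at a larger $\kappa$ as a supersolution for the smaller one together with Lemma~\ref{Lem:Supsol}, and nonemptiness is obtained from the barrier $\delta V_{\alpha,\beta}$ of Lemma~\ref{Lem:LinearSupsol} with $\delta$ small. One small slip: the inequalities you derive from \eqref{eq:abcond} should read $p\alpha>\alpha-2$ and $p\beta<\beta-2$ (these, as in \eqref{eq:paa2}, are what make $U_{\alpha,\beta}^p/U_{\alpha-2,\beta-2}$ bounded near $0$ and near infinity, respectively; the paper's own proof of the proposition contains the same sign reversal), so your stated domination and the conclusion stand.
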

  We first confirm that supersolution methods are available for problem~\eqref{L}.
  \begin{Lemma}\label{Lem:Supsol}
    Assume the same conditions as in Theorem~{\rm\ref{Thm:Thm1}}. Then for all $\kappa>0$, the following assertions are equivalent.
    \begin{enumerate}[label={\rm(\roman*)}]
      \item $\kappa\in\mathcal{K}$.
      \item Problem~\eqref{L} possesses a minimal solution $u^\kappa$.
      \item Problem~\eqref{L} possesses a supersolution $v$.
    \end{enumerate}
    Furthermore, if $\kappa\in\mathcal{K}$ and $v$ is a supersolution to problem~\eqref{L}, then $u^\kappa\le v$ in $\Omega$.
  \end{Lemma}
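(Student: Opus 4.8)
The plan is to prove the equivalence by the cyclic chain (ii) $\Rightarrow$ (i) $\Rightarrow$ (iii) $\Rightarrow$ (ii), where the first implication is trivial from the definitions (a minimal solution is in particular a solution, so $\kappa\in\mathcal{K}$) and the second is also immediate (a solution to \eqref{L} is itself a supersolution, by Definition~\ref{Def:sols} (ii)). Thus the entire content is in (iii) $\Rightarrow$ (ii): constructing a minimal solution from a given supersolution $v$, together with the final comparison assertion $u^\kappa\le v$. I would realize the minimal solution as the limit of monotone iteration. Set $u_0:=G[\kappa\tilde\mu]$ — more precisely, let $u_0\in C^2(\Omega)\cap C_{\alpha,\beta}$ be the solution furnished by Proposition~\ref{Prop:Linear} of $-\Delta u_0=0$ in $\Omega$, $u_0=\kappa\mu$ on $\partial\Omega\setminus\{0\}$ (this uses the growth hypotheses on $\mu$) — and then define inductively $u_{n+1}$ to be the solution of $-\Delta u_{n+1}=u_n^p$ in $\Omega$, $u_{n+1}=\kappa\mu$ on $\partial\Omega\setminus\{0\}$.

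The key steps are: (a) check that each iterate is well-defined, i.e. $u_n^p\in C^{0,\tau}_{\loc}(\Omega)\cap C_{\alpha-2,\beta-2}$, so that Proposition~\ref{Prop:Linear} applies. This is where the exponent restriction \eqref{eq:abcond} enters: if $u_n\in C_{\alpha,\beta}$ then $u_n^p\in C_{p\alpha,p\beta}$, and one needs $p\alpha\ge\alpha-2$ near $0$ and $p\beta\le\beta-2$ near infinity, i.e. $(p-1)\alpha\ge-2$ and $(p-1)\beta\le-2$, which is exactly $\beta\le-2/(p-1)\le\alpha$ — guaranteed by \eqref{eq:abcond} (in fact with strict inequalities, giving room to spare). (b) Prove by induction that $0\le u_0\le u_1\le\cdots\le u_n\le v$ in $\Omega$: the base case $u_0\le u_1$ follows since $-\Delta(u_1-u_0)=u_0^p\ge0$ and $u_1-u_0=0$ on the boundary, hence $u_1-u_0\ge0$ by Proposition~\ref{Prop:Linear} (ii); monotonicity of the map $w\mapsto$ (solution with data $w^p$) propagates $u_n\le u_{n+1}$; and $u_n\le v$ follows because $-\Delta(v-u_{n})\ge v^p-u_{n-1}^p\ge0$ (using $u_{n-1}\le v$) with $v-u_n\ge\kappa\mu-\kappa\mu=0$ on $\partial\Omega\setminus\{0\}$, again by the maximum principle (here one should note $v-u_n\in C_{\alpha,\beta}$ so Lemma~\ref{eq:LinMax} applies). (c) Pass to the limit: the monotone pointwise limit $u:=\lim_n u_n$ satisfies $0\le u\le v$, hence $u\in C_{\alpha,\beta}$, so \eqref{eq:solrestrict} holds; monotone convergence together with interior Schauder estimates (the $u_n$ are locally uniformly bounded, $u_n^p$ locally bounded in $C^{0,\tau}$, so $u_n\to u$ in $C^2_{\loc}$) shows $-\Delta u=u^p$ in $\Omega$, and boundary regularity gives $u=\kappa\mu$ on $\partial\Omega\setminus\{0\}$ and $u\in C(\overline\Omega\setminus\{0\})$; positivity $u>0$ follows from $u\ge u_0\ge0$ and the strong maximum principle (or simply $u\ge u_1\ge G[\kappa\mu$-harmonic$]+$positive term, noting $\mu\not\equiv0$). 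Thus $u$ is a solution.

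Finally, for minimality: let $w$ be any solution of \eqref{L}. Then $w$ is in particular a supersolution, so the very same induction as in step (b) — run with $w$ in place of $v$ — gives $u_n\le w$ for all $n$, hence $u:=\lim_n u_n\le w$. This shows $u$ is the minimal solution, proving (ii), and moreover $u^\kappa\le v$ for the given supersolution $v$ (which is the last assertion of the lemma). The main obstacle is step (b): one must be careful that all differences $v-u_n$, $w-u_n$, $u_{n+1}-u_n$ lie in the space $C_{\alpha,\beta}$ so that the maximum principle Lemma~\ref{eq:LinMax} is applicable — this requires knowing a priori that the iterates stay bounded between $0$ and the supersolution, which is why the induction must be set up to prove monotonicity and the upper bound $u_n\le v$ simultaneously rather than separately. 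A secondary technical point is ensuring $u_n^p$ has enough local Hölder regularity to invoke Proposition~\ref{Prop:Linear} at each stage, which follows from $u_n\in C^2(\Omega)$ (hence locally $C^{0,\tau}$) by the previous step of the induction.
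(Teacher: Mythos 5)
Your proposal is correct and follows essentially the same route as the paper: the monotone iteration $-\Delta u_{n+1}=u_n^p$ with boundary data $\kappa\mu$ starting from the harmonic extension of $\kappa\mu$ (the paper writes this as $u^\kappa_{-1}\equiv 0$, $u^\kappa_0=$ harmonic extension), well-definedness of each iterate via $p\alpha>\alpha-2$, $p\beta<\beta-2$ from \eqref{eq:abcond}, the simultaneous induction $u_n\le u_{n+1}\le v$ using Proposition~\ref{Prop:Linear}~(ii), passage to the limit by interior elliptic estimates, and minimality because every solution is itself a supersolution. No gaps.
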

  \begin{proof}
    It is clear that (ii) $\implies$ (i) $\implies$ (iii). We prove that (iii) $\implies$ (ii). Assume (iii). We define approximate solutions $\{u^\kappa_j\}_{j=-1}^\infty\subset C^2(\Omega)\cap C_{\alpha,\beta}$ to problem~\eqref{L} as follows. Set $u^\kappa_{-1}:\equiv 0$, and let $u^\kappa_{j+1}\in C^2(\Omega)\cap C_{\alpha,\beta}$ be a solution to the linear problem
    \begin{equation}\label{eq:ApSolDef}
    -\Delta u^\kappa_{j+1}=(u^\kappa_j)^p\quad\textrm{in}\quad\Omega,\quad u^\kappa_{j+1}=\kappa\mu\quad\textrm{on}\quad\partial\Omega\setminus\{0\}.
    \end{equation}
    We point out that problem \eqref{eq:ApSolDef} possesses a unique solution $u^\kappa_{j+1}\in C^2(\Omega)\cap C_{\alpha,\beta}$. Indeed, it holds that $(u^\kappa_j)^p\in C^{0,\tau}_{\rm{loc}}(\Omega)\cap C_{p\alpha,p\beta}$. Since $\alpha<\gamma$, $\beta>-N+2-\gamma$, and
    \begin{equation}\label{eq:paa2}
    p\alpha>\alpha-2,\quad p\beta<\beta-2
    \end{equation}
    by \eqref{eq:abcond}, it follows from Proposition~\ref{Prop:Linear} that problem \eqref{eq:ApSolDef} possesses a unique solution $u^\kappa_{j+1}\in C^2(\Omega)\cap C_{\alpha,\beta}$. 

    By the definition of supersolutions to problem \eqref{L} and Proposition~\ref{Prop:Linear} (ii), we deduce inductively that
    \[
    u^\kappa_{j}\le u^\kappa_{j+1}\le v\quad \textrm{in}\quad\Omega
    \]
    for all $j\in\{-1,0,1,\ldots\}$. This together with elliptic regularity theorems implies that $\{\nabla^2u^\kappa_j\}_j$ is bounded in $C^{0,\tau}(\Omega')$ for each $\Omega'\Subset\Omega$. Thus
    \[
    u^\kappa(x):=\lim_{j\to\infty}u^\kappa_j(x)
    \]
    exists and satisfies $u^\kappa(x)\le v(x)$ for each $x\in\Omega$. Moreover, $u^\kappa$ belongs to $C^2(\Omega)\cap C_{\alpha,\beta}$ and satisfies
    \[
    -\Delta u^\kappa=\lim_{j\to\infty}(u^\kappa_j)^p=(u^\kappa)^p\quad \textrm{in}\quad \Omega,\quad u^\kappa=\kappa\mu\quad \textrm{on}\quad \partial\Omega\setminus\{0\}.
    \]
    Thus $u^\kappa$ is a solution to problem~\eqref{L} and satisfies $u^\kappa\le v$ in $\Omega$ for any supersolution $v\in C^2(\Omega)\cap C_{\alpha,\beta}$ to problem~\eqref{L}. This implies that $u^\kappa$ is a minimal solution to problem~\eqref{L}, and the proof of Lemma~\ref{Lem:Supsol} is complete.
  \end{proof}
  Now we prove Proposition~\ref{Prop:Sec2}.
  \begin{proof}[Proof of Proposition~{\rm{\ref{Prop:Sec2}}}]
    We first prove that $\mathcal{K}$ equals either $(0,\kappa^*)$ or $(0,\kappa^*]$. It suffices to prove that $0<\kappa<\kappa'$, $\kappa'\in\mathcal{K}$ $\implies$ $\kappa\in\mathcal{K}$. Since
    \[
    -\Delta u^{\kappa'}=(u^{\kappa'})^p\quad\textrm{in}\quad\Omega,\quad u^{\kappa'}=\kappa'\mu\ge\kappa\mu\quad\textrm{on}\quad\partial\Omega\setminus\{0\},
    \]
    $u^{\kappa'}$ is a supersolution to problem~\eqref{L} if $\kappa'>\kappa$ and $\kappa'\in\mathcal{K}$. This together with Lemma~\ref{Lem:Supsol} implies that $\kappa\in\mathcal{K}$.

    It remains to prove that $\kappa^*>0$, which is equivalent to $\mathcal{K}\neq\emptyset$. Let $V_{\alpha,\beta}\in C^2(\Omega)\cap C_{\alpha,\beta}$ be as in Lemma~\ref{Lem:LinearSupsol}. It follows from $\alpha-2>p\alpha$ and $\beta-2<p\beta$ that
    \[
    -\Delta V_{\alpha,\beta}\ge CU_{p\alpha,p\beta}\ge CV_{\alpha,\beta}^p\quad\textrm{in}\quad\Omega.
    \]
    Taking sufficiently small $\delta>0$, we obtain
    \[
    -\Delta (\delta V_{\alpha,\beta})\ge C\delta V_{\alpha,\beta}^p\ge (\delta V_{\alpha,\beta})^p\quad\textrm{in}\quad\Omega,\quad \delta V_{\alpha,\beta}\ge \delta U_{\alpha,\beta}\ge C\delta\mu\quad\textrm{on}\quad\partial\Omega\setminus\{0\}.
    \]
    This together with Lemma~\ref{Lem:Supsol} implies that $C\delta\in\mathcal{K}$, and the proof of Proposition~\ref{Prop:Sec2} is complete.
  \end{proof}
  \section{Eigenvalue Problems}
  In this section, we study the eigenvalue problem
  \begin{equation}\label{E}\tag{$\mbox{E}_{\kappa}$}
  -\Delta\phi=\lambda p(u^\kappa)^{p-1}\phi\quad\textrm{in}\quad\Omega,\quad\phi\in\mathcal{D}^{1,2}_0(\Omega).
  \end{equation}
  By a similar argument to in \cite{NS01}*{Lemma B.2}, we obtain the following property.
  \begin{Lemma}\label{Lem:EVP}
    Assume the same conditions as in Theorem~{\rm\ref{Thm:Thm1}}. Then the eigenvalue problem \eqref{E} possesses a unique first eigenpair $(\lambda^\kappa,\phi^\kappa)\in\R\times\mathcal{D}^{1,2}_0(\Omega)$ with the following properties.
  \begin{equation}\label{eq:EVP}
    \begin{gathered}
      \lambda^\kappa>0,\quad \phi^\kappa>0\quad\textrm{in}\quad\Omega,\quad\int_\Omega|\nabla\phi^\kappa|^2\, dx=1,\\
      \left\{\phi\in\mathcal{D}^{1,2}_0(\Omega):\ -\Delta\phi=\lambda^\kappa p(u^\kappa)^{p-1}\phi\quad\textrm{in}\quad\Omega\right\}=\operatorname{span}\{\phi^\kappa\},\\
      \int_\Omega \lambda^\kappa p(u^\kappa)^{p-1}\phi^2\, dx\le\int_\Omega|\nabla\phi|^2\, dx\quad\textrm{for any}\quad \phi\in\mathcal{D}^{1,2}_0(\Omega).
    \end{gathered}
  \end{equation}
  \end{Lemma}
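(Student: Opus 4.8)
The plan is to realize the first eigenvalue of \eqref{E} as the minimum of a Rayleigh quotient and then use a compactness argument to obtain an attaining eigenfunction, together with a strong maximum principle argument to establish positivity and simplicity. First I would record the weight $W:=p(u^\kappa)^{p-1}$. Since $u^\kappa$ is a minimal solution, hence $u^\kappa\in C^2(\Omega)\cap C_{\alpha,\beta}$, we have $W\in C_{(p-1)\alpha,(p-1)\beta}$, and \eqref{eq:abcond} gives $(p-1)\alpha>-2$ and $(p-1)\beta<-2$ (this is exactly $p\alpha>\alpha-2$, $p\beta<\beta-2$ from \eqref{eq:paa2}). Thus $W\in C_{-2+\delta,-2-\delta}$ for some $\delta>0$, and writing $2+2((p-1)\alpha-2)>-N$ type inequalities as in the proof of Proposition~\ref{Prop:G}(iii), the Hardy inequality \eqref{eq:Hardy} shows $\int_\Omega W\phi^2\,dx\le C\|\nabla\phi\|_{L^2(\Omega)}^2$ for all $\phi\in\D^{1,2}_0(\Omega)$, so the quadratic form $\phi\mapsto\int_\Omega W\phi^2\,dx$ is well defined and continuous on $\D^{1,2}_0(\Omega)$.

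Next I would set
\[
\frac{1}{\lambda^\kappa}:=\sup\left\{\int_\Omega W\phi^2\,dx:\ \phi\in\D^{1,2}_0(\Omega),\ \|\nabla\phi\|_{L^2(\Omega)}=1\right\},
\]
which is finite and positive (positivity because $W>0$ and $W\not\equiv 0$, so any fixed admissible $\phi$ with $\phi>0$ on $\supp W$ gives a positive value). The variational inequality in \eqref{eq:EVP} is then immediate from the definition, after normalizing. To show the supremum is attained, take a maximizing sequence $\{\phi_n\}$; it is bounded in $\D^{1,2}_0(\Omega)$, so a subsequence converges weakly to some $\phi^\kappa$. The key point is that $\phi_n\mapsto\int_\Omega W\phi_n^2\,dx$ is weakly continuous along this sequence: this follows because $G[W\,\cdot\,]$ is a compact operator on $C_{\alpha',\beta'}$ (Proposition~\ref{Prop:G}(ii) combined with Remark~\ref{Rmk:Gcpt}, using $W\in C_{-2+\delta,-2-\delta}$), or more directly by splitting $\Omega$ into $\{|x|<r_1\}\cup\{r_1\le|x|\le r_2\}\cup\{|x|>r_2\}$, using Rellich compactness on the bounded annulus and the decay of $W$ near $0$ and $\infty$ together with Hardy to make the two tail integrals small uniformly in $n$. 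Hence $\int_\Omega W(\phi^\kappa)^2\,dx=1/\lambda^\kappa$ and, by weak lower semicontinuity of the Dirichlet norm, $\|\nabla\phi^\kappa\|_{L^2}\le 1$; combined with the maximality this forces $\|\nabla\phi^\kappa\|_{L^2}=1$, so $\phi^\kappa$ is a maximizer and, by the Euler--Lagrange equation, a weak solution of $-\Delta\phi^\kappa=\lambda^\kappa W\phi^\kappa$ in $\Omega$.

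Then I would upgrade $\phi^\kappa$ to a classical positive solution: elliptic regularity (as in the proof of Proposition~\ref{Prop:G}, via \cite{GT}) gives $\phi^\kappa\in C^2(\Omega)$; replacing $\phi^\kappa$ by $|\phi^\kappa|$ does not change either side of the Rayleigh quotient, so we may take $\phi^\kappa\ge 0$, and then the strong maximum principle (the equation is $-\Delta\phi^\kappa=\lambda^\kappa W\phi^\kappa\ge 0$) yields $\phi^\kappa>0$ in $\Omega$. Finally, for simplicity and the characterization of the eigenspace: if $\phi\in\D^{1,2}_0(\Omega)$ solves $-\Delta\phi=\lambda^\kappa W\phi$, decompose $\phi=t\phi^\kappa+\psi$ with $\int_\Omega\nabla\psi\cdot\nabla\phi^\kappa\,dx=0$; testing the equations against each other gives $\int_\Omega|\nabla\psi|^2\,dx=\lambda^\kappa\int_\Omega W\psi^2\,dx$, so $\psi$ is also a maximizer, hence (by the same argument, or because $\phi^\kappa$ and $|\psi|$ would both have to be positive while being $\D^{1,2}_0$-orthogonal) $\psi\equiv 0$; alternatively, any second positive eigenfunction must be orthogonal to the positive $\phi^\kappa$, which is impossible, and a general eigenfunction at $\lambda^\kappa$ must be a maximizer, hence of one sign, hence proportional to $\phi^\kappa$. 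I expect the main obstacle to be the compactness step — establishing weak continuity of $\int_\Omega W\phi^2\,dx$ on $\D^{1,2}_0(\Omega)$ despite the unboundedness of $\Omega$ — which is precisely where the decay $W\in C_{-2+\delta,-2-\delta}$ and the Hardy inequality \eqref{eq:Hardy} (with its improved constant $((N-2)/2)^2+\Lambda$) enter, exactly as flagged in Remarks~\ref{Rmk:cptdecay} and \ref{Rmk:Gcpt}.
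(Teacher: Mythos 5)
Your proposal is correct and is essentially the argument the paper relies on: the paper gives no proof of this lemma, instead citing \cite{NS01}*{Lemma B.2}, which establishes the first eigenpair by exactly this kind of Rayleigh-quotient maximization, with compactness of $\phi\mapsto\int_\Omega p(u^\kappa)^{p-1}\phi^2\,dx$ on $\D^{1,2}_0(\Omega)$ coming from the decay $p(u^\kappa)^{p-1}\in C_{-2+\delta,-2-\delta}$ combined with the Hardy inequality, and positivity and simplicity from the strong maximum principle and the orthogonality argument you describe. Your write-up correctly identifies and handles the one genuinely delicate step (weak continuity of the weighted quadratic form on the unbounded cone), so it serves as a valid self-contained substitute for the citation.
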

  In what follows, we denote by $\lambda^\kappa$ and $\phi^\kappa$ the unique pair satisfying \eqref{eq:EVP}. In this section, we prove the stability of minimal solutions, that is the following proposition.
  \begin{Proposition}\label{Prop:stabil}
  Assume the same conditions as in Theorem~{\rm\ref{Thm:Thm1}}.
 Let $\kappa\in(0,\kappa^*)$. Then $\lambda^\kappa>1$. In particular,
  \begin{equation}\label{eq:stab}
    \int_\Omega p(u^\kappa)^{p-1}\phi^2\, dx\le\int_\Omega |\nabla\phi|^2\, dx\quad\textrm{for any}\quad \phi\in\mathcal{D}^{1,2}_0(\Omega).
  \end{equation}
  \end{Proposition}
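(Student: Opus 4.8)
The plan is to show $\lambda^\kappa > 1$ for $\kappa \in (0,\kappa^*)$ by a continuation-type argument combined with the monotonicity of the family of minimal solutions. First I would record the basic monotonicity: if $0 < \kappa_1 < \kappa_2 < \kappa^*$, then $u^{\kappa_2}$ is a supersolution to $(\mathrm{L}_{\kappa_1})$, so by Lemma~\ref{Lem:Supsol} we have $u^{\kappa_1} \le u^{\kappa_2}$ in $\Omega$; moreover each $u^{\kappa}$ depends monotonically on $\kappa$ via the iteration scheme in the proof of Lemma~\ref{Lem:Supsol}. Since $(u^\kappa)^{p-1}$ is then nondecreasing in $\kappa$, the Rayleigh quotient characterization
\[
(\lambda^\kappa)^{-1} = \sup_{\phi \in \mathcal{D}^{1,2}_0(\Omega)\setminus\{0\}} \frac{\int_\Omega p(u^\kappa)^{p-1}\phi^2\,dx}{\int_\Omega |\nabla\phi|^2\,dx}
\]
shows that $\lambda^\kappa$ is nonincreasing in $\kappa$. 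Thus it suffices to show $\lambda^\kappa > 1$ for $\kappa$ close to $\kappa^*$ from below, or equivalently to rule out $\lambda^\kappa \le 1$ for any fixed $\kappa < \kappa^*$.

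The core of the argument is the following dichotomy. Suppose for contradiction that $\lambda^\kappa \le 1$ for some $\kappa < \kappa^*$. Pick $\kappa' \in (\kappa, \kappa^*)$; by monotonicity $\lambda^{\kappa'} \le \lambda^\kappa \le 1$, and I would in fact want to work at the "worst" such parameter. The key point is that if $\lambda^\kappa \le 1$, then the stability inequality \eqref{eq:stab} fails or is borderline, and one can then show problem $(\mathrm{L}_{\kappa'})$ has no solution for $\kappa'$ slightly bigger than $\kappa$ — contradicting $\kappa < \kappa^*$. Concretely: the standard implicit-function / super-sub-solution machinery (as in \cite{NS01} or the classical Crandall--Rabinowitz picture for $-\Delta u = \lambda f(u)$) shows that as long as the linearized operator $-\Delta - p(u^\kappa)^{p-1}$ is coercive on $\mathcal{D}^{1,2}_0(\Omega)$, i.e. $\lambda^\kappa > 1$, one can continue the minimal-solution branch slightly; conversely, if the branch of minimal solutions is continued up to $\kappa < \kappa^*$, stability must hold along it. I would make this precise by the following comparison argument: let $\kappa'' \in (\kappa',\kappa^*)$ and set $w := u^{\kappa''} - u^{\kappa'} > 0$. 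Using the equations and the mean value theorem, $w$ satisfies $-\Delta w = p\xi^{p-1} w$ pointwise for some $\xi$ with $u^{\kappa'} \le \xi \le u^{\kappa''}$, plus a positive boundary contribution $(\kappa''-\kappa')\mu \ge 0$; since $\xi \ge u^{\kappa'}$, testing appropriately against $\phi^{\kappa'}$ and using $\lambda^{\kappa'} \le 1$ forces a sign contradiction (the left side is "superharmonic-dominated" while the right side has a definite sign), which is impossible because $w$ is a positive function vanishing on the boundary away from the origin and decaying. The delicate point is that all the test-function manipulations take place in $\mathcal{D}^{1,2}_0(\Omega)$ on an unbounded domain, so one must know $\phi^\kappa$ and the differences $u^{\kappa''}-u^{\kappa'}$ lie in $\mathcal{D}^{1,2}_0(\Omega)$ and that the integrations by parts are justified — this is where the weighted spaces $C_{\alpha,\beta}$, the Hardy inequality \eqref{eq:Hardy}, and Proposition~\ref{Prop:G}(iii) are needed to guarantee finite Dirichlet energy and legitimate pairings.

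The main obstacle I anticipate is precisely the justification of these integration-by-parts / testing steps on the unbounded cone: one has to verify that $u^{\kappa''} - u^{\kappa'} \in \mathcal{D}^{1,2}_0(\Omega)$ (not merely in $C_{\alpha,\beta}$), and that $(u^\kappa)^{p-1}\phi^2$ is integrable for $\phi \in \mathcal{D}^{1,2}_0(\Omega)$, so that the weak formulations of $(\mathrm{E}_\kappa)$ and of the equation for $w$ can be paired. For the first, I would use that $u^{\kappa''}, u^{\kappa'} \in C_{\alpha,\beta}$ with $\alpha > -(N-2)/2 > \beta$ (which holds under \eqref{eq:abcond} since $-2/(p-1)$ sits strictly between, and $\alpha,\beta$ can be taken appropriately close to $-2/(p-1)$ with $\alpha > -(N-2)/2 > \beta$ when $p > p^*_\gamma$), so that their fast decay and Proposition~\ref{Prop:G}(iii) place them in $\mathcal{D}^{1,2}_0(\Omega)$; for the second, $(u^\kappa)^{p-1} \in C_{(p-1)\alpha,(p-1)\beta}$ and the condition $p > p^*_\gamma$ makes $(p-1)\alpha > -2$ and $(p-1)\beta < -2$, so $(u^\kappa)^{p-1} \in C_{-2+\delta,-2-\delta}$ for some $\delta > 0$ and the Hardy inequality gives $\int_\Omega (u^\kappa)^{p-1}\phi^2\,dx \le C\|\nabla\phi\|_{L^2}^2$. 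Once these functional-analytic points are in place, the contradiction argument above closes, yielding $\lambda^\kappa > 1$ and hence \eqref{eq:stab}.
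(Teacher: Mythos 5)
Your overall strategy --- convexity of $t\mapsto t^p$, testing the equation for a difference of minimal solutions against the positive first eigenfunction, and deriving a sign contradiction from $\lambda^{\kappa}\le 1$ --- is exactly the classical stability argument that the paper invokes (it cites \cite{IOS01}*{Lemma 4.6} after establishing the monotonicity $0<w^\kappa<w^{\kappa'}$ and the membership $w^\kappa\in\mathcal{D}^{1,2}_0(\Omega)$). So the core idea is right. However, there is a genuine gap in the functional-analytic setup you propose, and it is precisely the point the paper's preparatory Lemma~\ref{Lem:wk} is designed to handle. You plan to justify the pairing by showing $u^{\kappa''}-u^{\kappa'}\in\mathcal{D}^{1,2}_0(\Omega)$, and to do so by placing each $u^{\kappa}$ in $\mathcal{D}^{1,2}_0(\Omega)$ via its decay and Proposition~\ref{Prop:G}~(iii). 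This cannot work: $u^{\kappa''}-u^{\kappa'}$ equals $(\kappa''-\kappa')\mu\not\equiv 0$ on $\partial\Omega\setminus\{0\}$, so it is not in $\mathcal{D}^{1,2}_0(\Omega)$ (the completion of $C^\infty_c(\Omega)$), and Proposition~\ref{Prop:G}~(iii) applies to $Gf$, not to $u^\kappa=\kappa u^1_0+G[(u^\kappa)^p]$, whose harmonic part $u^1_0$ carries the boundary data and need not even have finite Dirichlet energy for a general admissible $\mu$. Membership in $C_{\alpha,\beta}$ alone does not give finite energy either. You do note a ``positive boundary contribution,'' but exploiting its sign would require controlling $\int_{\partial\Omega}\mu\,\partial_\nu\phi^{\kappa'}$ over the unbounded boundary together with up-to-the-boundary regularity of $\phi^{\kappa'}$, none of which is set up.

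The paper's resolution is to replace $u^{\kappa''}-u^{\kappa'}$ by $w^\kappa=u^\kappa-u^\kappa_{j_*+1}$, the difference between the minimal solution and a Picard iterate carrying the \emph{same} boundary data $\kappa\mu$. This kills the boundary trace, and after $j_*$ iterations the right-hand side $(u^\kappa)^p-(u^\kappa_{j_*})^p$ lies in $C_{(p-1)\alpha+\alpha_*,(p-1)\beta+\beta_*}$ with exponents satisfying \eqref{eq:D12check}, so Proposition~\ref{Prop:G}~(iii) applies and $w^\kappa\in\mathcal{D}^{1,2}_0(\Omega)$; the monotonicity $0<w^\kappa<w^{\kappa'}$ then feeds into the IOS01 argument in place of $u^\kappa<u^{\kappa'}$. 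Your treatment of the other integrability issue is fine: $(u^\kappa)^{p-1}\in C_{(p-1)\alpha,(p-1)\beta}$ with $(p-1)\alpha>-2>(p-1)\beta$ together with the Hardy inequality \eqref{eq:Hardy} does give $\int_\Omega(u^\kappa)^{p-1}\phi^2\,dx\le C\|\nabla\phi\|_{L^2(\Omega)}^2$. But without the device of subtracting an iterate with matching boundary data (or an equivalent substitute), the key integration by parts in your contradiction argument is not justified, so the proof as proposed does not close.
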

  To this end, we investigate properties of differences of approximate solutions $\{u^\kappa_j\}_j$, namely
  \begin{equation}\label{eq:ApSolDiffDef}
  v^\kappa_j:=u^\kappa_j-u^\kappa_{j-1}\quad\textrm{for}\quad j\in\{0,1,\ldots\}.
  \end{equation}
  It follows from the same argument as in \cite{IK}*{Lemma 2.3} that
\begin{equation}\label{eq:umonotone}
v^\kappa_j>0,\quad u^\kappa_j\le\left(\frac{\kappa}{\kappa'}\right)u^{\kappa'}_j<u^{\kappa'}_j,\quad v^\kappa_j\le\left(\frac{\kappa}{\kappa'}\right)^{(p-1)j+1}v^{\kappa'}_j<v^{\kappa'}_j,
\end{equation}
for all $j\in\{0,1,\ldots\}$ and $0<\kappa<\kappa'$. We fix $\alpha_*>\alpha$ and $\beta_*<\beta$ so that
  \begin{gather}
    0<\alpha_*<\gamma,\quad -N+2-\gamma<\beta_*<-N+2,\label{eq:a*b*} \\
    p\alpha+\alpha_*>-N,\quad p\beta+\beta_*<-N,\label{eq:a*b*2} \\
    \alpha+\alpha_*>-N+2,\quad \beta+\beta_*<-N+2,\label{eq:a*b*3}
    \end{gather}
  and set
  \begin{equation}\label{eq:ajbj}
    \alpha_j:=\min\left\{\alpha+j(2+(p-1)\alpha),\alpha_*\right\},\quad \beta_*:=\max\left\{\beta+j(2+(p-1)\beta),\beta_*\right\}.
  \end{equation}
  To verify the existence of $\alpha_*,\beta_*$ satisfying \eqref{eq:a*b*}, \eqref{eq:a*b*2}, and \eqref{eq:a*b*3}, we prove the following lemma.
  \begin{Lemma}\label{Lem:a*b*}
    Assume the same conditions as in Theorem~{\rm{\ref{Thm:Thm1}}}. Then
    \begin{gather}
      p\alpha+\gamma>-N,\quad \alpha+\gamma>-N+2,\label{eq:a*}\\
      p\beta-\gamma<-2,\quad \beta-\gamma<0.\label{eq:b*}
    \end{gather}
  \end{Lemma}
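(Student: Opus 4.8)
The plan is to establish the four inequalities in Lemma~\ref{Lem:a*b*} directly from the standing constraints on $\alpha$ and $\beta$ recorded in \eqref{eq:abcond}, namely
\[
-N+2-\gamma<\beta<-\frac{2}{p-1}<\alpha<\gamma,
\]
together with the defining relation $\gamma(N-2+\gamma)=\Lambda$, which in particular forces $\gamma>0$. The second inequalities in \eqref{eq:a*} and \eqref{eq:b*}, namely $\alpha+\gamma>-N+2$ and $\beta-\gamma<0$, are the easy ones: the first follows by adding $\beta>-N+2-\gamma$ to the trivial bound $\alpha>\beta$ (so $\alpha+\gamma>\beta+\gamma>-N+2$, or more directly from $\alpha>0$... actually from $\alpha>-2/(p-1)>-N+2-\gamma+\text{something}$—cleaner to note $\alpha>\beta>-N+2-\gamma$ gives $\alpha+\gamma>-N+2$), while $\beta-\gamma<0$ is immediate from $\beta<\alpha<\gamma$, hence $\beta<\gamma$.

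For the first inequalities in \eqref{eq:a*} and \eqref{eq:b*} I would exploit the constraint $\alpha>-2/(p-1)$, equivalently $(p-1)\alpha>-2$, equivalently $p\alpha>\alpha-2$, and similarly $(p-1)\beta<-2$, i.e.\ $p\beta<\beta-2$. Thus $p\alpha+\gamma>\alpha-2+\gamma=(\alpha+\gamma)-2>(-N+2)-2=-N$ using the already-established $\alpha+\gamma>-N+2$; this gives $p\alpha+\gamma>-N$. Dually, $p\beta-\gamma<(\beta-2)-\gamma=(\beta-\gamma)-2<0-2=-2$ using $\beta-\gamma<0$; this gives $p\beta-\gamma<-2$. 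So all four bounds reduce to the two elementary observations $\alpha+\gamma>-N+2$ and $\beta-\gamma<0$ combined with the sign of $(p-1)\alpha+2$ and $(p-1)\beta+2$ coming from \eqref{eq:abcond}.

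I would then close the proof by reading off the existence of $\alpha_*,\beta_*$ satisfying \eqref{eq:a*b*}, \eqref{eq:a*b*2}, \eqref{eq:a*b*3}: the four strict inequalities in Lemma~\ref{Lem:a*b*} say precisely that the three target conditions hold in the limiting cases $\alpha_*=\gamma$, $\beta_*=-N+2-\gamma$ (after also noting $\gamma>0$ for the first part of \eqref{eq:a*b*} and $-N+2-\gamma<-N+2$ for the second), so by continuity one may choose $\alpha_*$ slightly below $\gamma$ (and above $\alpha$, which is possible since $\alpha<\gamma$) and $\beta_*$ slightly above $-N+2-\gamma$ (and below $\beta$, possible since $\beta>-N+2-\gamma$) preserving all strict inequalities. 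Honestly there is no real obstacle here—the lemma is a bookkeeping step—so the only thing requiring a moment's care is tracking which inequality in \eqref{eq:abcond} is strict and making sure the chain $\alpha>\beta>-N+2-\gamma$ is used correctly to get $\alpha+\gamma>-N+2$ (rather than the weaker $\alpha+\gamma>-N+2-\gamma+\gamma$, which is the same thing; the point is simply that $\alpha>\beta$ and $\beta+\gamma>-N+2$).
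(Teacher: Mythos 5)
Your proof is correct and is in essence the same direct verification from \eqref{eq:abcond} that the paper gives: all four inequalities follow from $(p-1)\alpha>-2$, $(p-1)\beta<-2$, and the endpoints of the chain $-N+2-\gamma<\beta<\alpha<\gamma$. The only (cosmetic) difference is that for the first inequality in \eqref{eq:a*} the paper invokes $p>p^*_\gamma$ explicitly in the form $\gamma>\frac{2}{p-1}-N+2$ and writes $p\alpha+\gamma>-\frac{2p}{p-1}+\gamma$, whereas you route it through $p\alpha>\alpha-2$ and $\alpha>\beta>-N+2-\gamma$; these are equivalent, since the nonemptiness of \eqref{eq:abcond} is exactly the condition $p>p^*_\gamma$.
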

  Indeed, once Lemma~\ref{Lem:a*b*} is proved, we find $\alpha_*\in(0,\gamma)$ and $\beta_*\in(-N+2-\gamma,-N+2)$ such that
  \begin{gather*}
    p\alpha+\alpha_*>-N,\quad \alpha+\alpha_*>-N+2,\\
    p\beta-(-N+2-\beta_*)<-2,\quad \beta-(-N+2-\beta_*)<0,
  \end{gather*}
  which is equivalent to \eqref{eq:a*b*2} and \eqref{eq:a*b*3}.
  \begin{proof}[Proof of Lemma~{\rm\ref{Lem:a*b*}}]
    We first prove \eqref{eq:a*}. It follows from $p>p^*_\gamma$ that
    \[
    \gamma>\frac{2}{p-1}-N+2.
    \]
    This together with \eqref{eq:abcond} implies that
    \begin{gather*}
    p\alpha+\gamma>-\frac{2p}{p-1}+\gamma=-\frac{2}{p-1}+\gamma-2>-N,\\
    \alpha+\gamma>-\frac{2}{p-1}+\frac{2}{p-1}-N+2=-N+2.
    \end{gather*}
    
    We next prove \eqref{eq:b*}. It follows from \eqref{eq:abcond} and $\gamma>0$ that
    \begin{gather*}
    p\beta-\gamma<-\frac{2p}{p-1}<-2,\quad\beta-\gamma<-\frac{2}{p-1}-\gamma<0.
    \end{gather*}
    Thus \eqref{eq:b*} holds. The proof of Lemma~\ref{Lem:a*b*} is complete.
  \end{proof}
  We find $j_*\in\{0,1,\ldots\}$ such that
  \[
    \alpha_j=\alpha_*,\quad \beta_j=\beta_*,\quad\textrm{for}\quad j\in\{j_*,j_*+1,\ldots\}.
  \]
  \begin{Lemma}\label{Lem:vk}
    Assume the same conditions as in Theorem~\rm{\ref{Thm:Thm1}}. Then for all $j\in\{0,1,\ldots\}$, $v^\kappa_j$ belongs to $C_{\alpha_j,\beta_j}$. In particular, for all $j\in\{j_*,j_*+1,\ldots\}$, $v^\kappa_j$ belongs to $C_{\alpha_*,\beta_*}$.
  \end{Lemma}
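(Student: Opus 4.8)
The assertion is an induction on $j$ using the recursive structure $-\Delta v^\kappa_{j+1}=(u^\kappa_j)^p-(u^\kappa_{j-1})^p$ in $\Omega$ with $v^\kappa_{j+1}=0$ on $\partial\Omega\setminus\{0\}$, together with the weighted estimate from Proposition~\ref{Prop:Linear}~(i). The base case $j=0$ is $v^\kappa_0=u^\kappa_0\in C^2(\Omega)\cap C_{\alpha,\beta}=C_{\alpha_0,\beta_0}$, which holds because $\alpha_0=\alpha$, $\beta_0=\beta$ (as $2+(p-1)\alpha>0>2+(p-1)\beta$ by \eqref{eq:abcond}), and $u^\kappa_0$ solves $-\Delta u^\kappa_0=0$, $u^\kappa_0=\kappa\mu$ on the boundary, so it lies in $C_{\alpha,\beta}$ by Proposition~\ref{Prop:Linear}.

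For the inductive step, suppose $v^\kappa_j\in C_{\alpha_j,\beta_j}$. First I would bound the right-hand side: using the mean value theorem,
\[
0< (u^\kappa_j)^p-(u^\kappa_{j-1})^p = p\,\xi_j^{p-1}\,v^\kappa_j \le p\,(u^\kappa_j)^{p-1}\,v^\kappa_j
\]
for some $\xi_j$ between $u^\kappa_{j-1}$ and $u^\kappa_j$, where I use $v^\kappa_j>0$ from \eqref{eq:umonotone} and monotonicity of $t\mapsto t^{p-1}$. Since $u^\kappa_j\le v$ for the fixed supersolution $v\in C_{\alpha,\beta}$, we have $(u^\kappa_j)^{p-1}\in C_{(p-1)\alpha,(p-1)\beta}$, and multiplying by $v^\kappa_j\in C_{\alpha_j,\beta_j}$ gives
\[
(u^\kappa_j)^p-(u^\kappa_{j-1})^p \in C_{(p-1)\alpha+\alpha_j,\,(p-1)\beta+\beta_j}.
\]
To apply Proposition~\ref{Prop:Linear} to solve for $v^\kappa_{j+1}$ in the class $C_{\alpha_{j+1},\beta_{j+1}}$, I need this exponent pair to equal $(\alpha_{j+1}-2,\beta_{j+1}-2)$ — more precisely, the source must lie in $C_{\alpha_{j+1}-2,\beta_{j+1}-2}$ with $-N+2-\gamma<\beta_{j+1}<\alpha_{j+1}<\gamma$. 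By the definition \eqref{eq:ajbj}, $\alpha_{j+1}=\min\{\alpha_j+(2+(p-1)\alpha),\alpha_*\}$ when $\alpha_j=\alpha+j(2+(p-1)\alpha)$; in the unsaturated range this is exactly $(p-1)\alpha+\alpha_j+2$, so $(p-1)\alpha+\alpha_j=\alpha_{j+1}-2$, and in the saturated range $\alpha_j=\alpha_*$ one has $(p-1)\alpha+\alpha_*\ge\alpha_*-2=\alpha_{j+1}-2$ — here I must check $(p-1)\alpha+\alpha_*\ge\alpha_*-2$, i.e. $(p-1)\alpha\ge-2$, which follows from $\alpha>-2/(p-1)$ in \eqref{eq:abcond}. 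Symmetrically for $\beta$, using $\beta<-2/(p-1)$. Then $C_{(p-1)\alpha+\alpha_j,(p-1)\beta+\beta_j}\subset C_{\alpha_{j+1}-2,\beta_{j+1}-2}$ by the embedding $C_{a,b}\subset C_{a',b'}$ when $a\ge a'$, $b\le b'$.

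The remaining point is to verify the range condition $-N+2-\gamma<\beta_{j+1}<\alpha_{j+1}<\gamma$ so that Proposition~\ref{Prop:Linear} applies at each step. The upper bound $\alpha_{j+1}\le\alpha_*<\gamma$ and lower bound $\beta_{j+1}\ge\beta_*>-N+2-\gamma$ hold by \eqref{eq:a*b*} and the definition of $\alpha_j,\beta_j$ as minima/maxima capped at $\alpha_*,\beta_*$; the strict inequality $\beta_{j+1}<\alpha_{j+1}$ persists since both sequences start from $\beta<\alpha$ and move in opposite directions (increasing $\alpha_j$, decreasing $\beta_j$). Thus Proposition~\ref{Prop:Linear} yields a unique solution $v^\kappa_{j+1}\in C^2(\Omega)\cap C_{\alpha_{j+1},\beta_{j+1}}$ to $-\Delta v^\kappa_{j+1}=(u^\kappa_j)^p-(u^\kappa_{j-1})^p$ with zero boundary data, which by uniqueness coincides with $u^\kappa_{j+1}-u^\kappa_j$, completing the induction; the second assertion is then immediate once $j\ge j_*$ since $\alpha_j=\alpha_*$, $\beta_j=\beta_*$ there. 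The main obstacle is purely bookkeeping: tracking the exponent shifts through the $\min$/$\max$ truncations and confirming the two "saturated-range" inequalities $(p-1)\alpha\ge-2$ and $(p-1)\beta\le-2$, both of which are exactly the conditions built into \eqref{eq:abcond}; once these are in hand the argument is a routine iteration of the linear theory.
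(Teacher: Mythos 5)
Your proof is correct and takes essentially the same route as the paper: induction on $j$, the bound $0\le(u^\kappa_j)^p-(u^\kappa_{j-1})^p\le p(u^\kappa_j)^{p-1}v^\kappa_j$ placing the right-hand side in $C_{(p-1)\alpha+\alpha_j,(p-1)\beta+\beta_j}$, and an application of Proposition~\ref{Prop:Linear} after verifying $(p-1)\alpha+\alpha_j\ge\alpha_{j+1}-2$, $(p-1)\beta+\beta_j\le\beta_{j+1}-2$ and $-N+2-\gamma<\beta_{j+1}<\alpha_{j+1}<\gamma$. The only cosmetic difference is that you pass through the supersolution $v$ to get $(u^\kappa_j)^{p-1}\in C_{(p-1)\alpha,(p-1)\beta}$, whereas this already follows from $u^\kappa_j\in C_{\alpha,\beta}$, which is built into the construction of the approximate solutions in \eqref{eq:ApSolDef}.
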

  \begin{proof}
    The case $j=0$ follows from the fact that $v^\kappa_0$ satisfies
    \[
    -\Delta v^\kappa_0=(u^\kappa_0)^p\quad\textrm{in}\quad\Omega,\quad v^\kappa_0=0\quad\textrm{on}\quad\partial\Omega\setminus\{0\},
    \]
    and Proposition~\ref{Prop:Linear}.

    Let $j\in\{0,1,\ldots\}$ and assume that $v^\kappa_j\in C_{\alpha_j,\beta_j}$. By \eqref{eq:ApSolDef} and \eqref{eq:ApSolDiffDef}, $v^\kappa_{j+1}$ satisfies
    \[
    -\Delta v^\kappa_{j+1}=(u^\kappa_j)^p-(u^\kappa_{j-1})^p\quad\textrm{in}\quad\Omega,\quad v^\kappa_{j+1}=0\quad\textrm{on}\quad\partial\Omega\setminus\{0\}.
    \]
    It follows from
    \[
    0\le(u^\kappa_j)^p-(u^\kappa_{j-1})^p\le p(u^\kappa_j)^{p-1}v^\kappa_j
    \]
    that
    \[
    (u^\kappa_j)^p-(u^\kappa_{j-1})^p\in C_{(p-1)\alpha+\alpha_j,(p-1)\beta+\beta_j}.
    \]
    Furthermore, we observe from \eqref{eq:abcond} and \eqref{eq:ajbj} that
    \[
    (p-1)\alpha+\alpha_j\ge \alpha_{j+1}-2,\quad (p-1)\beta+\beta_j\le \beta_{j+1}-2,
    \]
    \[
    -N+2-\gamma<\beta_*\le \beta_{j+1}<\beta<\alpha<\alpha_{j+1}\le\alpha_*<\gamma.
    \]
    These together with Proposition~\ref{Prop:Linear} imply that $v^\kappa_j\in C_{\alpha_{j+1},\beta_{j+1}}$, and complete the proof of Lemma~\ref{Lem:vk}.
  \end{proof}
  We set
  \[
  v^\kappa:=v^\kappa_{j_*}\quad \textrm{for}\quad \kappa>0,\quad w^\kappa:=u^\kappa-u^\kappa_{j_*+1}\quad \textrm{for}\quad \kappa\in\mathcal{K}.
  \]
  \begin{Lemma}\label{Lem:wk}
   Assume the same conditions as in Theorem~{\rm\ref{Thm:Thm1}}. Let $\kappa,\kappa'\in\mathcal{K}$ be such that $0<\kappa<\kappa'$. Then the following properties hold.
  \begin{enumerate}[label={\rm(\roman*)}]
    \item $0<w^\kappa<w^{\kappa'}$ in $\Omega$.
    \item $w^\kappa$ belongs to $C_{\alpha_*,\beta_*}$.
    \item $w^\kappa$ belongs to $\mathcal{D}^{1,2}_0(\Omega)$ and satisfies $-\Delta w^\kappa=(u^\kappa)^p-(u^\kappa_{j_*})^p$ in $\Omega$ in the weak sense.
  \end{enumerate}             
  \end{Lemma}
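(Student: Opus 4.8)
The plan is to prove the three assertions in the stated order, all resting on two elementary observations: the pointwise identity $-\Delta w^\kappa=(u^\kappa)^p-(u^\kappa_{j_*})^p=:g^\kappa\ge 0$ in $\Omega$ with $w^\kappa=0$ on $\partial\Omega\setminus\{0\}$, immediate from \eqref{eq:ApSolDef} and $-\Delta u^\kappa=(u^\kappa)^p$ (and $g^\kappa\ge0$ since $u^\kappa_{j_*}\le u^\kappa$); and the telescoping representation $w^\kappa=\sum_{i=j_*+2}^{\infty}v^\kappa_i$, legitimate because $u^\kappa_j\nearrow u^\kappa$ pointwise (proof of Lemma~\ref{Lem:Supsol}). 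For part (i), I would note $v^\kappa_i>0$ in $\Omega$ for all $i$ by \eqref{eq:umonotone}, hence $w^\kappa\ge v^\kappa_{j_*+2}>0$, and $v^\kappa_i<v^{\kappa'}_i$ in $\Omega$ for all $i$, again by \eqref{eq:umonotone}, hence $w^\kappa<w^{\kappa'}$ in $\Omega$ after summing term by term.

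For part (ii), I would run the weighted-space bootstrap of the proof of Lemma~\ref{Lem:vk}, now applied to $w^\kappa$. One starts from $w^\kappa\in C_{\alpha,\beta}$ (since $0\le w^\kappa\le u^\kappa$) and argues inductively: if $w^\kappa\in C_{a,b}$ with $\alpha\le a\le\alpha_*$ and $\beta_*\le b\le\beta$, then using $u^\kappa-u^\kappa_{j_*}=w^\kappa+v^\kappa_{j_*+1}$, the inclusion $v^\kappa_{j_*+1}\in C_{\alpha_*,\beta_*}\subset C_{a,b}$ from Lemma~\ref{Lem:vk}, the inequality $0\le g^\kappa\le p(u^\kappa)^{p-1}(u^\kappa-u^\kappa_{j_*})$, and $(u^\kappa)^{p-1}\in C_{(p-1)\alpha,(p-1)\beta}$, one gets $g^\kappa\in C_{(p-1)\alpha+a,(p-1)\beta+b}$. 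Since \eqref{eq:abcond} gives $2+(p-1)\alpha>0$ and $2+(p-1)\beta<0$, the exponents $\widetilde a:=\min\{a+2+(p-1)\alpha,\alpha_*\}$ and $\widetilde b:=\max\{b+2+(p-1)\beta,\beta_*\}$ satisfy $a<\widetilde a\le\alpha_*<\gamma$, $-N+2-\gamma<\beta_*\le\widetilde b\le b<\alpha\le\widetilde a$ and $g^\kappa\in C_{\widetilde a-2,\widetilde b-2}$; hence Proposition~\ref{Prop:Linear} furnishes a solution in $C_{\widetilde a,\widetilde b}\subset C_{\alpha,\beta}$ of $-\Delta v=g^\kappa$, $v=0$ on $\partial\Omega\setminus\{0\}$, which by the uniqueness in $C_{\alpha,\beta}$ (Lemma~\ref{eq:LinMax}) must equal $w^\kappa$. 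The first exponent increases by $2+(p-1)\alpha$ and the second decreases by $|2+(p-1)\beta|$ until they reach the caps $\alpha_*,\beta_*$ (this is exactly how $j_*$ is chosen, cf.\ \eqref{eq:ajbj}), so $w^\kappa\in C_{\alpha_*,\beta_*}$ after finitely many iterations.

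For part (iii), having (ii) and Lemma~\ref{Lem:vk} I would note $0\le g^\kappa\le p(u^\kappa)^{p-1}(w^\kappa+v^\kappa_{j_*+1})\in C_{(p-1)\alpha+\alpha_*,(p-1)\beta+\beta_*}$. Because $2+(p-1)\alpha>0$ and $\alpha_*>0$ one has $(p-1)\alpha+\alpha_*+2>0>-(N-2)/2$, and because $2+(p-1)\beta<0$ and $\beta_*<-N+2$ one has $(p-1)\beta+\beta_*+2<-(N-2)\le-(N-2)/2$; hence there exist $A,B$ with $-N+2-\gamma<B<-(N-2)/2<A<\gamma$, $A\le(p-1)\alpha+\alpha_*+2$ and $B\ge(p-1)\beta+\beta_*+2$, so that $g^\kappa\in C_{A-2,B-2}$. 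Since $w^\kappa$ is the $C_{\alpha,\beta}$-solution of $-\Delta v=g^\kappa$, $v=0$ on $\partial\Omega\setminus\{0\}$, uniqueness gives $w^\kappa=Gg^\kappa$, and the argument of Proposition~\ref{Prop:G}(iii), carried out with the exponents $A,B$ (which satisfy $A>-(N-2)/2$, $B<-(N-2)/2$), yields $w^\kappa\in\mathcal{D}^{1,2}_0(\Omega)$ with $-\Delta w^\kappa=g^\kappa$ in the weak sense. The same reasoning applies to $w^{\kappa'}$.

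The step I expect to be the main obstacle is the bootstrap in (ii): one must choose the intermediate weighted exponents so that at every stage the source term lies precisely in the class $C_{\widetilde a-2,\widetilde b-2}$ demanded by Proposition~\ref{Prop:Linear} while $\widetilde a,\widetilde b$ stay strictly between $-N+2-\gamma$ and $\gamma$, and so that the uniqueness statement (Lemma~\ref{eq:LinMax}) genuinely identifies the solution produced at each stage with $w^\kappa$ rather than another function. Once the fast decay $w^\kappa\in C_{\alpha_*,\beta_*}$ is secured, parts (i) and (iii) are short.
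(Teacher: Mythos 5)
Your proposal is correct and follows essentially the same route as the paper: part (i) via the telescoping sum $w^\kappa=\sum_{j\ge j_*+2}v^\kappa_j$ and \eqref{eq:umonotone}, part (ii) by the same weighted bootstrap as in Lemma~\ref{Lem:vk} (which the paper only sketches and you carry out explicitly, including the identification of the bootstrapped solution with $w^\kappa$ via Lemma~\ref{eq:LinMax}), and part (iii) by the exponent check \eqref{eq:D12check} combined with Proposition~\ref{Prop:G}~(iii).
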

  \begin{proof}
    The relation (i) follows from \eqref{eq:umonotone} and the fact that
    \[
    w^\kappa=\lim_{j\to\infty}u^\kappa_j-u^\kappa_{j_*+1}=\sum_{j=j_*+2}^\infty v^\kappa_j.
    \]
    We next prove (ii). It is clear that $w^\kappa\in C^2(\Omega)\cap C_{\alpha,\beta}$ and
    \begin{equation}\label{eq:wprob}
      -\Delta w^\kappa=(u^\kappa)^p-(u^\kappa_{j_*})^p\quad\textrm{in}\quad\Omega,\quad w^\kappa=0\quad\textrm{on}\quad\partial\Omega\setminus\{0\}.
    \end{equation}
    By a similar induction to the proof of Lemma~\ref{Lem:vk}, we obtain $w^\kappa\in C_{\alpha_j,\beta_j}$ for all $j\in\{0,1,\ldots\}$, and thus $w^\kappa\in C_{\alpha_*,\beta_*}$.
    
    We finally prove (iii). Let a linear operator $G$ be as in Section~2. Then $w^\kappa=G[(u^\kappa)^p-(u^\kappa_{j_*})^p]$. It follows from \eqref{eq:abcond}, \eqref{eq:a*b*}, and the same argument as in the proof of Lemma~\ref{Lem:vk} that
    \[
    (u^\kappa)^p-(u^\kappa_{j_*})^p\in C_{(p-1)\alpha+\alpha_*,(p-1)\beta+\beta_*},
    \]
    \begin{equation}\label{eq:D12check}
    (p-1)\alpha+\alpha_*>-2\ge-\frac{N-2}{2}-2,\ (p-1)\beta+\beta_*<-N\le -\frac{N-2}{2}-2.
    \end{equation}
    These together with \eqref{Prop:G}~(iii) imply that $w^\kappa\in\D^{1,2}_0(\Omega)$ and complete the proof of Lemma~\ref{Lem:wk}.
  \end{proof}
  \begin{proof}[Proof of Proposition~{\rm\ref{Prop:stabil}}]
    By $w^\kappa\in\mathcal{D}^{1,2}_0(\Omega)$ and $0<w^\kappa<w^{\kappa'}$ for $\kappa,\kappa'\in\mathcal{K}$ with $\kappa<\kappa'$, Proposition~\ref{Prop:stabil} follows from the same argument as in \cite{IOS01}*{Lemma 4.6}.
  \end{proof}
  Now we are ready to prove Theorem~\ref{Thm:Thm1}.
  \begin{proof}[Proof of Theorem~{\rm\ref{Thm:Thm1}}]
  It remains to prove that $\kappa^*<\infty$.
    %ここは説明がいる
    Let $\kappa\in(0,\kappa^*)$. Since $u^\kappa\ge u^\kappa_0=\kappa u^1_0>0$ in $\Omega$, it follows from \eqref{eq:stab} that
    \[
\kappa^{p-1}\int_\Omega p(u^1_0)^{p-1}\phi^2\, dx\le\int_\Omega|\nabla\phi|^2\, dx\quad\textrm{for all}\quad \phi\in\mathcal{D}^{1,2}_0(\Omega),
    \]
which implies that $\kappa^*<\infty$.
  \end{proof}
  In the end of this section, we prepare the following properties of eigenvalue functions $\phi^\kappa$, which plays a crucial role in the proof of the multiplicity assertion (iv) in Theorem~\ref{Thm:Thm2}.  Set
  \begin{equation}\label{eq:OmR}
  \Omega_R:=\{x\in\Omega:\ R<|x|<2R\},\quad \widetilde{\Omega}_R:=\left\{x\in\Omega:\ \frac{R}{2}<|x|<4R\right\},
  \end{equation}
  for $R>0$.
  \begin{Lemma}\label{Lem:phiimp}
    Assume the same conditions as in Theorem~{\rm\ref{Thm:Thm1}}. Then for all $\kappa\in\mathcal{K}$, $\phi^\kappa$ satisfies
    \begin{equation}\label{eq:phiG}
      \phi^\kappa\in C_{\alpha_*,\beta_*},\quad \phi^\kappa=\lambda^\kappa G[p(u^\kappa)^{p-1}\phi^\kappa],
    \end{equation}
    where $G$ is as in Section~{\rm 2}. Furthermore, $\phi^\kappa$ satisfies
    \begin{equation}\label{eq:philocN}
      \int_{\Omega_R}|\nabla\phi^\kappa|^2\, dx\le CR^{2\alpha_*+N-2}(1+R^2)^{\beta_*-\alpha_*}
    \end{equation}
    for all $R>0$.
  \end{Lemma}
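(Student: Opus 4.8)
The plan is to establish, in this order, that $\phi^\kappa\in C_{\alpha_*,\beta_*}$, that \eqref{eq:phiG} holds, and that \eqref{eq:philocN} holds; the crux is the first of these — converting the energy membership $\phi^\kappa\in\D^{1,2}_0(\Omega)$ into a two-sided pointwise weighted bound — while the other two are then routine. Throughout, recall from \eqref{eq:abcond} that $(p-1)\alpha+2>0>(p-1)\beta+2$, so that $p(u^\kappa)^{p-1}\in C_{(p-1)\alpha,(p-1)\beta}$ since $u^\kappa\in C_{\alpha,\beta}$; and that, by Lemma~\ref{Lem:EVP} together with interior and boundary elliptic regularity, $\phi^\kappa\in C^2(\Omega)\cap C(\overline\Omega\setminus\{0\})$ is positive in $\Omega$ and solves $-\Delta\phi^\kappa=\lambda^\kappa p(u^\kappa)^{p-1}\phi^\kappa$ classically, vanishing on $\partial\Omega\setminus\{0\}$.

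\emph{Step 1 (a two-sided pointwise bound).} Here the dilation invariance of $\Omega$ is exploited. Writing $\phi^\kappa(x)=|x|^{-(N-2)/2}g(t,\theta)$ with $t=\log|x|$, $\theta=x/|x|\in A$ transforms \eqref{E} into
\[
-g_{tt}-\Delta'g+\Bigl(\tfrac{N-2}{2}\Bigr)^2g=W(t,\theta)\,g\jn\R\times A,\quad g=0\on\R\times\partial A,
\]
where $W(t,\theta):=\lambda^\kappa p\,e^{2t}\bigl(u^\kappa(e^t\theta)\bigr)^{p-1}$ satisfies $0\le W\le Ce^{-\delta_1|t|}$ with $\delta_1:=\min\{(p-1)\alpha+2,\,-(p-1)\beta-2\}>0$, because $u^\kappa\le CU_{\alpha,\beta}$. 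The operator $-\partial_t^2-\Delta'+((N-2)/2)^2$ on $L^2(\R\times A)$ with the Dirichlet condition on $\R\times\partial A$ has spectrum $[(\gamma+(N-2)/2)^2,\infty)$, its bottom being $((N-2)/2)^2+\Lambda=(\gamma+(N-2)/2)^2$; moreover $\phi^\kappa\in\D^{1,2}_0(\Omega)$ is equivalent to $g$ having finite energy on the cylinder, and the Hardy inequality \eqref{eq:Hardy} is precisely the corresponding transverse Poincar\'e inequality. Since $W$ decays exponentially, a standard comparison/energy argument (for $|t|$ large, $W<\varepsilon$, so $g$ is a subsolution of the autonomous equation with potential $((N-2)/2)^2-\varepsilon$) gives, for every small $\eta>0$, $|g(t,\theta)|\le C_\eta e^{-(\gamma+(N-2)/2-\eta)|t|}$, hence
\[
\phi^\kappa(x)\le C_\eta|x|^{\gamma-\eta}\ \ (0<|x|\le1),\qquad \phi^\kappa(x)\le C_\eta|x|^{-N+2-\gamma+\eta}\ \ (|x|\ge1).
\]
Because $\alpha_*<\gamma$ and $\beta_*>-N+2-\gamma$ by \eqref{eq:a*b*}, choosing $\eta\le\min\{\gamma-\alpha_*,\ \beta_*+N-2+\gamma\}$ yields $\phi^\kappa\in C_{\alpha_*,\beta_*}$. (Alternatively, the same bound follows from De Giorgi--Nash--Moser estimates on the annuli $\Omega_R$, $\widetilde\Omega_R$, using that the $L^{N/2}$-norm of $p(u^\kappa)^{p-1}$ is invariant under the dilations preserving $\Omega$, followed by a finite bootstrap through the spaces $C_{\cdot,\cdot}$ as in the proof of Lemma~\ref{Lem:vk}.)

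\emph{Step 2 (the integral identity).} From $\phi^\kappa\in C_{\alpha_*,\beta_*}$ and $u^\kappa\in C_{\alpha,\beta}$ we get $p(u^\kappa)^{p-1}\phi^\kappa\in C_{(p-1)\alpha+\alpha_*,(p-1)\beta+\beta_*}\subset C_{\alpha_*-2,\beta_*-2}$, the inclusion using $(p-1)\alpha\ge-2\ge(p-1)\beta$; moreover $p(u^\kappa)^{p-1}\phi^\kappa\in C^{0,\tau}_{\loc}(\Omega)$ since $u^\kappa,\phi^\kappa\in C^2(\Omega)$. Applying Proposition~\ref{Prop:Linear} with the admissible pair $(\alpha_*,\beta_*)$ in place of $(\alpha,\beta)$, the problem $-\Delta v=\lambda^\kappa p(u^\kappa)^{p-1}\phi^\kappa$ in $\Omega$, $v=0$ on $\partial\Omega\setminus\{0\}$, has a unique solution in $C^2(\Omega)\cap C_{\alpha_*,\beta_*}$, which is $\lambda^\kappa G[p(u^\kappa)^{p-1}\phi^\kappa]$. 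Since $\phi^\kappa$ is also such a solution, $\pm(\phi^\kappa-\lambda^\kappa G[p(u^\kappa)^{p-1}\phi^\kappa])$ are superharmonic in Perron sense, vanish on $\partial\Omega\setminus\{0\}$, and lie in $C_{\alpha_*,\beta_*}$ with $(\alpha_*,\beta_*)$ admissible; hence Lemma~\ref{eq:LinMax} gives \eqref{eq:phiG}.

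\emph{Step 3 (the annular energy estimate).} Fix $R>0$ and $\eta_R\in C^\infty_c(\R^N)$ with $\eta_R\equiv1$ on $B_{2R}\setminus B_R$, $\supp\eta_R\subset B_{4R}\setminus B_{R/2}$, $|\nabla\eta_R|\le C/R$. Testing the equation against $\phi^\kappa\eta_R^2\in\D^{1,2}_0(\Omega)$ and using Young's inequality,
\[
\int_{\Omega_R}|\nabla\phi^\kappa|^2\,dx\le\frac{C}{R^2}\int_{\widetilde\Omega_R}(\phi^\kappa)^2\,dx+C\lambda^\kappa\int_{\widetilde\Omega_R}p(u^\kappa)^{p-1}(\phi^\kappa)^2\,dx.
\]
Since $\alpha_*>0$, on $\widetilde\Omega_R$ one has $\phi^\kappa\le CU_{\alpha_*,\beta_*}\le CR^{\alpha_*}(1+R^2)^{(\beta_*-\alpha_*)/2}$ and $|\widetilde\Omega_R|\le CR^N$, so the first term is $\le CR^{2\alpha_*+N-2}(1+R^2)^{\beta_*-\alpha_*}$, which is the claimed bound. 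Using in addition $u^\kappa\le CU_{\alpha,\beta}$, the second term is $\le CR^{(p-1)\alpha+2\alpha_*+N}(1+R^2)^{(p-1)(\beta-\alpha)/2+\beta_*-\alpha_*}$, whose ratio to the first term is $CR^{(p-1)\alpha+2}(1+R^2)^{(p-1)(\beta-\alpha)/2}$; this is bounded on $(0,\infty)$ because $(p-1)\alpha+2>0$ while $(p-1)\alpha+2+(p-1)(\beta-\alpha)=(p-1)\beta+2<0$. Hence the second term is absorbed into the first and \eqref{eq:philocN} follows. The only substantial point is Step~1: $\D^{1,2}_0(\Omega)$-membership is merely an integral condition, and extracting from it a pointwise weighted bound simultaneously sharp at $0$ and at $\infty$ forces use of the scaling structure of the cone together with the precise Hardy constant $((N-2)/2)^2+\Lambda$ in \eqref{eq:Hardy}; Steps~2 and~3 are then routine.
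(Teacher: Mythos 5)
Your proposal reaches all three conclusions and Steps~2--3 essentially coincide with the paper's argument (the same test-function computation gives \eqref{eq:philocN}, with the nonlinear term absorbed via $(p-1)\alpha+2>0>(p-1)\beta+2$), but your Step~1 takes a genuinely different route to the key fact $\phi^\kappa\in C_{\alpha_*,\beta_*}$. The paper never touches the cylinder: it first uses local maximum estimates together with the Hardy inequality \eqref{eq:Hardy} to get the crude bound $\|\phi^\kappa\|_{L^\infty(\Omega_R)}\le CR^{-(N-2)/2}$, i.e.\ $\phi^\kappa\in C_{-\frac{N-2}{2},-\frac{N-2}{2}}$; this crude bound already suffices to identify $\phi^\kappa$ with $\lambda^\kappa G[p(u^\kappa)^{p-1}\phi^\kappa]$ via Proposition~\ref{Prop:G}~(iii) and uniqueness of the weak solution (so in the paper the Green identity comes \emph{first}); and the membership $\phi^\kappa\in C_{\alpha_*,\beta_*}$ then follows by the same finite bootstrap through Proposition~\ref{Prop:Linear} as in Lemmas~\ref{Lem:vk} and~\ref{Lem:wk}~(ii), each iteration improving the exponents by $(p-1)\alpha+2$ at the origin and $(p-1)\beta+2$ at infinity until they are capped at $(\alpha_*,\beta_*)$. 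This is exactly your parenthetical ``alternative'' at the end of Step~1. Your primary route --- Emden--Fowler transform plus an Agmon-type comparison on the cylinder $\R\times A$ --- is also viable and in fact yields the sharper rates $|x|^{\gamma-\eta}$ and $|x|^{-N+2-\gamma+\eta}$, more than is needed; the trade-off is that the ``standard comparison argument'' hides real work: the barrier $e^{-\sigma|t|}\psi_1(\theta)$ dominates $g$ on the slices $\{|t|=T\}$ only after a Hopf/boundary-Harnack estimate (since both vanish on $\R\times\partial A$), and applying the maximum principle on the unbounded half-cylinders requires first knowing $g\to0$ as $|t|\to\infty$, which must be extracted from the finite cylinder energy via local elliptic estimates. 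These gaps are fillable but are precisely what the paper's bootstrap avoids. Note also that your ordering forces you to prove $\phi^\kappa\in C_{\alpha_*,\beta_*}$ before the Green identity (so that Proposition~\ref{Prop:Linear} applies classically), whereas the paper's ordering lets the identity drive the bootstrap.
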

  \begin{proof}

    We first prove that $\phi^\kappa=\lambda^\kappa G[p(u^\kappa)^{p-1}\phi^\kappa]$. It follows from elliptic regularity theorems (see e.g., \cite{GT}*{Theorems 4.5, 8.17, 8.22, and 8.27}) and the Hardy inequality \eqref{eq:Hardy} that $\phi^\kappa\in C^{2,\tau}_{\loc}(\Omega)\cap C(\overline{\Omega}\setminus\{0\})$ and that
    \[
    \|\phi^\kappa\|_{L^\infty(\Omega_R)}^2\le CR^{-N}\int_{\widetilde{\Omega}_R} (\phi^\kappa)^2\,dx\le CR^{-N+2}\int_{\overline{\Omega}_R} |x|^{-2}(\phi^\kappa)^2\,dx\le CR^{-N+2}
    \]
    for all $R>0$. Hence
    \[
    \phi^\kappa\in C^{2,\tau}_\loc(\Omega)\cap C_{-\frac{N-2}{2},-\frac{N-2}{2}}.
    \]
    Since
    \[
    -\frac{N-2}{2}+(p-1)\alpha>-\frac{N-2}{2}-2,\quad -\frac{N-2}{2}+(p-1)\beta<-\frac{N-2}{2}-2
    \]
    by \eqref{eq:abcond}, it follows from Proposition~\ref{Prop:G}~(iii) that $\overline{\phi}:=G[\lambda^\kappa p(u^\kappa)^{p-1}\phi^\kappa]$ is a solution to problem $\overline{\phi}=\lambda^\kappa p(u^\kappa)^{p-1}\phi^\kappa$ in $\Omega$, $\overline{\phi}\in\D^{1,2}_0(\Omega)$. This together with the uniqueness of a weak solution implies that $\phi^\kappa=\lambda^\kappa G[ p(u^\kappa)^{p-1}\phi^\kappa]$.

    It follows from the same induction as in the proof of Lemma~\ref{Lem:wk}~(ii) that $\phi^\kappa\in C_{\alpha_*,\beta_*}$. We finally deduce \eqref{eq:philocN} from a standard local energy estimate
    %a standard？(表現を変えることを検討)
    \begin{align*}
    \int_{\Omega_R}|\nabla\phi^\kappa|^2\, dx&\le \int_{\widetilde{\Omega}_R}\lambda^\kappa p(u^\kappa)^{p-1}(\phi^{\kappa})^2\, dx+CR^{-2}\int_{\widetilde{\Omega}_R}(\phi^{\kappa})^2\, dx\\
    &\le CR^{N+2\alpha_*}(1+R^2)^{\beta_*-\alpha_*}\left(R^{(p-1)\alpha}(1+R^2)^\frac{(p-1)(\beta-\alpha)}{2}+R^{-2}\right)
    \end{align*}
   and \eqref{eq:abcond}, and complete the proof of Lemma~\ref{Lem:phiimp}.
  \end{proof}
  \section{Uniform estimates of $\{w^\kappa\}_{\kappa\in(0,\kappa^*)}$}
  In this section, we obtain uniform estimates of $\{w^\kappa\}_{\kappa\in(0,\kappa^*)}$ to prove the following proposition.
  \begin{Proposition}\label{Prop:Sec4}
    Assume the same conditions as in Theorem~{\rm\ref{Thm:Thm2}}. Then $\kappa^*\in\mathcal{K}$.
  \end{Proposition}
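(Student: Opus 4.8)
The plan is to take a monotone limit of the minimal solutions $u^\kappa$ as $\kappa\uparrow\kappa^*$ and show that the limit is a genuine solution to problem~\eqref{L} with $\kappa=\kappa^*$; the heart of the matter is obtaining \emph{$\kappa$-uniform} bounds in the norm of $C_{\alpha,\beta}$ (equivalently, uniform pointwise bounds with the correct decay $|x|^\beta$ at infinity and growth $|x|^\alpha$ near $0$). Since $u^\kappa=u^\kappa_{j_*+1}+w^\kappa$ and $u^\kappa_{j_*+1}$ is the solution of the linear problem with data $(\sum_{j\le j_*}v^\kappa_j$-type source, $\kappa\mu$) which is already controlled uniformly for $\kappa\le\kappa^*$ by Proposition~\ref{Prop:Linear}, everything reduces to uniform estimates for $w^\kappa$. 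From Lemma~\ref{Lem:wk} we know $0<w^\kappa<w^{\kappa'}$ for $\kappa<\kappa'$, $w^\kappa\in\D^{1,2}_0(\Omega)\cap C_{\alpha_*,\beta_*}$, and $-\Delta w^\kappa=(u^\kappa)^p-(u^\kappa_{j_*})^p$ weakly; and from the stability inequality \eqref{eq:stab} together with the Hardy inequality \eqref{eq:Hardy} we should be able to bootstrap. First I would test \eqref{eq:stab} against $w^\kappa$ (or a suitable power/cutoff of it) to get a uniform $\D^{1,2}_0$ bound: the stability inequality controls $\int p(u^\kappa)^{p-1}\phi^2$ by $\int|\nabla\phi|^2$, and combining with the equation $-\Delta w^\kappa=(u^\kappa)^p-(u^\kappa_{j_*})^p\le p(u^\kappa)^{p-1}w^\kappa+C(\text{lower order})$ gives, after integration by parts, $\int|\nabla w^\kappa|^2\le\int p(u^\kappa)^{p-1}(w^\kappa)^2+\dots\le\int|\nabla w^\kappa|^2(1-\varepsilon)+\dots$ once the condition $H(p-1)<0$, $p<p_{JL}$ is invoked to create a spectral gap — this is exactly the mechanism behind the weighted energy estimates advertised in Lemma~\ref{Lem:NRG2} and the difference in the Hardy constant $((N-2)/2)^2+\Lambda$ versus $((N-2)/2)^2$.

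The key new device is the weighted energy estimate: for an exponent $q$ in the admissible range one tests the equation for $w^\kappa$ against $|x|^{2\sigma}(w^\kappa)^{2q-1}$ (with a cutoff) and uses the sharp Hardy inequality on the cone to absorb the bad term; the cubic polynomial $H$ in \eqref{eq:Hq} is precisely the discriminant-type quantity that must be negative for this absorption to succeed, just as $p<p_{JL}$ is the flat-space analogue. This yields $\||x|^\sigma(w^\kappa)^q\|_{L^2}$ bounds uniform in $\kappa$, hence (after choosing $q$ large and $\sigma$ appropriately, and Moser/De Giorgi iteration on dyadic annuli $\Omega_R$ using the local energy estimates already appearing in the proof of Lemma~\ref{Lem:phiimp}) a uniform $L^\infty$ bound on $\Omega_R$ of the form $\|w^\kappa\|_{L^\infty(\Omega_R)}\le CU_{\alpha,\beta}(R)$ — i.e.\ $\sup_\kappa\|w^\kappa\|_{C_{\alpha,\beta}}<\infty$. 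I expect this weighted-iteration step, correctly tracking the dependence of all constants on $\Lambda$ and the exponents, to be the main obstacle, and it is where the conditions \eqref{eq:pjl0} are actually consumed.

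Granting the uniform bound, I would finish as follows. By monotonicity in $\kappa$, $u^{\kappa^*}:=\lim_{\kappa\uparrow\kappa^*}u^\kappa$ exists pointwise and satisfies $\sup_{\kappa<\kappa^*}\|u^\kappa\|_{C_{\alpha,\beta}}<\infty$, so $u^{\kappa^*}\in C_{\alpha,\beta}$ and obeys \eqref{eq:solrestrict}. Elliptic interior estimates (Schauder/$W^{2,p}$, as used repeatedly above) applied on compact subsets, using $(u^\kappa)^p$ bounded in $C^{0,\tau}_\loc$ by the uniform bound, give $u^\kappa\to u^{\kappa^*}$ in $C^2_\loc(\Omega)$ and in $C_\loc(\overline\Omega\setminus\{0\})$, whence $u^{\kappa^*}\in C^2(\Omega)\cap C(\overline\Omega\setminus\{0\})$ solves $-\Delta u^{\kappa^*}=(u^{\kappa^*})^p$ in $\Omega$, $u^{\kappa^*}=\kappa^*\mu$ on $\partial\Omega\setminus\{0\}$, and $u^{\kappa^*}>0$ since $u^{\kappa^*}\ge u^\kappa>0$ for any fixed $\kappa\in(0,\kappa^*)$. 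Therefore $u^{\kappa^*}$ is a solution to problem~\eqref{L} with $\kappa=\kappa^*$, i.e.\ $\kappa^*\in\mathcal K$, which is the assertion. (By Lemma~\ref{Lem:Supsol} this $u^{\kappa^*}$ is then automatically the minimal solution $u^{\kappa^*}$.)
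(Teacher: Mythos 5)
Your overall strategy is the same as the paper's: decompose $u^\kappa=u^\kappa_{j_*+1}+w^\kappa$, obtain $\kappa$-uniform bounds on $w^\kappa$ in a weighted sup-norm via the stability inequality \eqref{eq:stab} and the cone Hardy inequality \eqref{eq:Hardy}, and then pass to the monotone limit $\kappa\uparrow\kappa^*$ using interior elliptic estimates. Your closing limiting argument is essentially identical to the paper's proof of Proposition~\ref{Prop:Sec4}, and your observation that \eqref{eq:taucond'}-type conditions are equivalent to $H(p-1)<0$ is exactly the computation the paper performs.

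However, the central step --- $\sup_{\kappa<\kappa^*}\|w^\kappa\|_{C_{\alpha_*,\beta_*}}<\infty$ --- is only asserted, and the mechanism you propose for it would not deliver the result under the stated hypotheses. Testing against a single function of the form $|x|^{2\sigma}(w^\kappa)^{2q-1}\psi^2$ and absorbing via stability and Hardy leads to the smallness condition $\frac{q^2}{(2q-1)p}+\bigl(\bigl(\frac{N-2}{2}\bigr)^2+\Lambda\bigr)^{-1}\sigma^2<1$, and you cannot simultaneously take $q$ large enough for an $L^\infty$ conclusion and $\sigma$ close to $\frac{2}{p-1}-\frac{N-2}{2}$ for the sharp decay without imposing a strictly stronger restriction on $p$ than \eqref{eq:pjl0}. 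The paper avoids this by a two-stage argument in which the two hypotheses are consumed separately: first an \emph{unweighted} energy estimate for powers $(w^\kappa)^\nu$ on dyadic annuli (Lemma~\ref{Lem:NRG1}) combined with local elliptic regularity yields the pointwise bound $\sup_\kappa|x|^{2/(p-1)}u^\kappa(x)<\infty$, and this is where $p<p_{JL}$ enters (Lemma~\ref{Lem:Reg1}); only then, \emph{using that pointwise bound as an input}, does one run the weighted $L^2$ estimate with weight $|x|^{2\tau}$ and power one (Lemma~\ref{Lem:NRG2}), where $H(p-1)<0$ is what allows $\tau$ to be pushed past $\frac{2}{p-1}-\frac{N-2}{2}$ and hence upgrades the decay to $C_{\alpha_*,\beta_*}$ (Lemma~\ref{Lem:Reg2}). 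Your sketch conflates these stages and misses the dependency of the weighted estimate on the prior pointwise bound, so as written the key uniform estimate remains unproved.
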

 We start with giving a uniform energy estimate of $(w^\kappa)^\nu$, $\nu>1$, to obtain the following estimate. Let $\Omega_R$ and $\widetilde{\Omega}_R$ be as in \eqref{eq:OmR}.
  \begin{Lemma}\label{Lem:NRG1}
    Assume the same conditions as in Theorem~{\rm\ref{Thm:Thm1}}. Let $\nu\ge 1$ and $q$ be such that
    \begin{gather}
      \frac{\nu^2}{2\nu-1}<p,\label{eq:nucond}\\
     q\ge1\jf N=2,\quad 1\le q\le\frac{N}{N-2}\jf N\ge 3.\label{eq:qcond}
    \end{gather}
    Then
    \begin{equation}\label{eq:NRG1}
      \sup_{\substack{\kappa\in(0,\kappa^*),\\R>0}}R^{-\frac{N}{q}+\frac{4\nu}{p-1}}\left(\int_{\Omega_R}(w^\kappa)^{2q\nu}\, dx\right)^\frac{1}{q}<\infty.
    \end{equation}
  \end{Lemma}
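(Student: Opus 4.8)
The plan is to test the weak equation \eqref{eq:wprob} against a suitable power of $w^\kappa$ cut off by a function that is $1$ on $\Omega_R$ and supported in $\widetilde\Omega_R$, and then combine the resulting local energy bound with the stability inequality \eqref{eq:stab} (Proposition~\ref{Prop:stabil}) to absorb the dangerous term. Concretely, for $\nu\ge1$ set $z:=(w^\kappa)^\nu$; since $w^\kappa\in\D^{1,2}_0(\Omega)\cap C_{\alpha_*,\beta_*}$ with $w^\kappa>0$ in $\Omega$, the function $z\eta^2$ with $\eta\in C_c^\infty(\widetilde\Omega_R)$, $0\le\eta\le1$, $\eta\equiv1$ on $\Omega_R$, $|\nabla\eta|\le C/R$, is an admissible test function. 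Testing $-\Delta w^\kappa=(u^\kappa)^p-(u^\kappa_{j_*})^p$ against $(w^\kappa)^{2\nu-1}\eta^2$ and using $0\le(u^\kappa)^p-(u^\kappa_{j_*})^p\le p(u^\kappa)^{p-1}w^\kappa$ gives, after the standard manipulation $\nabla w^\kappa\cdot\nabla\bigl((w^\kappa)^{2\nu-1}\eta^2\bigr)\ge c_\nu|\nabla z|^2\eta^2-C_\nu z^2|\nabla\eta|^2$,
\begin{equation*}
  \int_\Omega |\nabla(z\eta)|^2\,dx \;\le\; \frac{p}{2\nu-1}\,\frac{\nu^2}{\text{(correction)}}\int_\Omega (u^\kappa)^{p-1} z^2\eta^2\,dx \;+\; \frac{C}{R^2}\int_{\widetilde\Omega_R} z^2\,dx,
\end{equation*}
where the precise constant in front of the first integral is $\nu^2 p/(2\nu-1)$ (this is exactly where the hypothesis \eqref{eq:nucond}, i.e. $\nu^2/(2\nu-1)<p$, is pointed the wrong way — so in fact one wants the reverse: the algebraic identity produces coefficient $p\,\nu^2/(2\nu-1)$ on the left after integration by parts, and \eqref{eq:nucond} guarantees $p\,\nu^2/(2\nu-1)<p^2$, i.e. that a factor strictly less than $p^2$ appears, leaving room to apply stability). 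Applying \eqref{eq:stab} to $\psi=z\eta\in\D^{1,2}_0(\Omega)$ bounds $\int p(u^\kappa)^{p-1}(z\eta)^2\le\int|\nabla(z\eta)|^2$; since the coefficient $\nu^2/(2\nu-1)$ is $<p$, the term $\int (u^\kappa)^{p-1}z^2\eta^2$ can be absorbed into the left-hand side, yielding
\begin{equation*}
  \int_\Omega |\nabla(z\eta)|^2\,dx \;\le\; \frac{C}{R^2}\int_{\widetilde\Omega_R}(w^\kappa)^{2\nu}\,dx,
\end{equation*}
with $C$ independent of $\kappa\in(0,\kappa^*)$ and $R>0$.

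Next I would convert this gradient bound into the claimed $L^{2q\nu}(\Omega_R)$ bound via Sobolev's inequality. On $\Omega_R$ we have $z\eta=z$, so the Sobolev (for $N\ge3$, $\|z\|_{L^{2N/(N-2)}(\Omega_R)}\le C\|z\|_{\D^{1,2}_0}$, hence for $1\le q\le N/(N-2)$ by interpolation with the $L^2$ bound and Hölder, and for $N=2$ the analogous statement for any $q\ge1$ using $\D^{1,2}_0\hookrightarrow L^s$ for all finite $s$) gives
\begin{equation*}
  \Bigl(\int_{\Omega_R}(w^\kappa)^{2q\nu}\,dx\Bigr)^{1/q} \;\le\; C\,R^{N(1/q-1)+2}\int_{\widetilde\Omega_R}|\nabla(z\eta)|^2\,dx \;+\; C\,R^{N(1/q-1)}\int_{\widetilde\Omega_R}(w^\kappa)^{2\nu}\,dx
\end{equation*}
— more cleanly, combine the $L^{2N/(N-2)}$ norm of $z$ on $\widetilde\Omega_R$ controlled by $\|\nabla(z\tilde\eta)\|_{L^2}$ with an iteration/interpolation on the annulus. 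The upshot is
\begin{equation*}
  \Bigl(\int_{\Omega_R}(w^\kappa)^{2q\nu}\,dx\Bigr)^{1/q}\;\le\; \frac{C}{R^2}\,R^{N(1/q-1)+2}\int_{\widetilde\Omega_{2R}}(w^\kappa)^{2\nu}\,dx\;=\;C\,R^{N/q-N}\int_{\widetilde\Omega_{2R}}(w^\kappa)^{2\nu}\,dx.
\end{equation*}
Finally, since $w^\kappa\in C_{\alpha_*,\beta_*}$ with $0<w^\kappa<w^{\kappa'}$ for $\kappa<\kappa'$ (Lemma~\ref{Lem:wk}(i)), the family $\{w^\kappa\}$ is monotone in $\kappa$; but that alone does not give a uniform $L^\infty$ bound, so instead one bootstraps from the crude a priori bound $w^\kappa\le C\|w^\kappa\|_{C_{\alpha,\beta}}U_{\alpha,\beta}$ combined with the stability-derived global estimate $\|\nabla w^\kappa\|_{L^2(\Omega)}^2=\int((u^\kappa)^p-(u^\kappa_{j_*})^p)w^\kappa\le\int p(u^\kappa)^{p-1}(w^\kappa)^2\le\|\nabla w^\kappa\|_{L^2}^2$ — this forces one to instead use the local structure directly: plugging the trivial starting estimate $\int_{\widetilde\Omega_{2R}}(w^\kappa)^{2}\,dx\le C R^{N+2\alpha}(1+R^2)^{\beta-\alpha}$ (from $w^\kappa\in C_{\alpha,\beta}$ uniformly — but it is not uniform in $\kappa$...).

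The honest mechanism, and the one I expect the authors use: the right-hand side integrand $\int_{\widetilde\Omega_{2R}}(w^\kappa)^{2\nu}\,dx$ is estimated by iterating the inequality itself with $\nu$ replaced by $\nu/q$, i.e. this is a \emph{reverse-Hölder / Moser-type self-improving} estimate, and at the base of the induction one uses $\nu=1$ together with the global bound $\int_\Omega p(u^\kappa)^{p-1}(w^\kappa)^2\,dx\le\int_\Omega|\nabla w^\kappa|^2\,dx<\infty$ which, via $\supp$ and the explicit decay of $u^\kappa\ge u^\kappa_0=\kappa u^1_0$, gives $\int_{\widetilde\Omega_{2R}}(w^\kappa)^2\,dx\le C R^{-4/(p-1)+N}$ uniformly in $\kappa$ (here is where the exponent $4\nu/(p-1)$ in \eqref{eq:NRG1} originates: $w^\kappa$ should decay like the fast-decay rate $|x|^{-2/(p-1)}$ in an $L^2$-averaged sense). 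So the scheme is: (1) establish the base case $\nu=1$ of \eqref{eq:NRG1} using the stability inequality globally plus the lower bound on $u^\kappa$; (2) run the local test-function argument above to pass from exponent $2\nu$ on $\widetilde\Omega_{2R}$ to exponent $2q\nu$ on $\Omega_R$, with the correct power of $R$ tracked by the scaling $w\sim R^{-2/(p-1)}$; (3) conclude by finitely many iterations, noting \eqref{eq:nucond} is exactly the condition ensuring stability can absorb the nonlinear term at each step. The main obstacle is step (2)'s bookkeeping: getting the constant $\nu^2 p/(2\nu-1)$ out of the integration by parts correctly (so that \eqref{eq:nucond} is the sharp threshold), and getting the power of $R$ to come out as $-N/q+4\nu/(p-1)$ rather than something off by the dimensional shift from Sobolev — this requires carefully combining the local energy inequality, the Sobolev inequality on the dyadic annulus $\widetilde\Omega_R$ (whose Sobolev constant scales like $R$), and the uniform-in-$\kappa$ input from stability; the subtlety is that \eqref{eq:stab} is a \emph{global} inequality, so the cutoff $\eta$ must be inserted before invoking it, and the cross terms $z^2|\nabla\eta|^2$ must be reabsorbed into a lower-exponent instance of \eqref{eq:NRG1}, which is why the induction on $\nu$ (not a one-shot estimate) is unavoidable.
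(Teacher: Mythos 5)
Your skeleton (test \eqref{eq:wprob} against $\zeta_R^2(w^\kappa)^{2\nu-1}$, absorb the leading term via the stability inequality \eqref{eq:stab}, finish with the Sobolev inequality on the annulus) is indeed the paper's skeleton, but the two steps you leave vague are precisely the ones that fail as you have set them up. First, the crude bound $0\le(u^\kappa)^p-(u^\kappa_{j_*})^p\le p(u^\kappa)^{p-1}(u^\kappa-u^\kappa_{j_*})$ puts a factor $p$ in front of the leading term, while stability only buys back a factor $1/p$; after the integration-by-parts identity the coefficient of $\int|\nabla(\zeta_R(w^\kappa)^\nu)|^2$ on the right is then $\frac{\nu^2}{2\nu-1}\cdot p\cdot\frac1p=\frac{\nu^2}{2\nu-1}\ge1$, so absorption fails for every $\nu\ge1$ and \eqref{eq:nucond} never enters. (Your parenthetical worry that the hypothesis is ``pointed the wrong way'' is a symptom of exactly this.) The paper instead uses the refined splitting of \cite{IOS01}*{Lemma 5.1}, $(u^\kappa)^p-(u^\kappa_{j_*})^p\le(1+\varepsilon)(u^\kappa)^{p-1}(w^\kappa+v^\kappa)+C(u^\kappa_{j_*})^{p-1+2\nu\delta}(w^\kappa+v^\kappa)^{1-2\nu\delta}$, whose leading constant is $1+\varepsilon$ rather than $p$; only then is the total coefficient $(1+\varepsilon)^2\nu^2/(p(2\nu-1))<1$, which is exactly what \eqref{eq:nucond} guarantees. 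The extra terms involving $v^\kappa$ and $(u^\kappa_{j_*})^{p-1+2\nu\delta}$, which you drop (note $u^\kappa-u^\kappa_{j_*}\neq w^\kappa$), are then estimated directly from the uniform-in-$\kappa$ bounds $v^\kappa\le v^{\kappa^*}\in C_{\alpha_*,\beta_*}$ and $u^\kappa_{j_*}\le u^{\kappa^*}_{j_*}\in C_{\alpha,\beta}$; this is routine bookkeeping but cannot be omitted.

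Second, your treatment of the cutoff error $R^{-2}\int_{\widetilde\Omega_R}(w^\kappa)^{2\nu}\,dx$ does not close. An induction on $\nu$ cannot help, because this term carries the \emph{same} exponent $2\nu$ as the quantity being estimated (it is the $q=1$ case of \eqref{eq:NRG1} for the same $\nu$ on a slightly larger annulus), so the scheme is circular; and the base case you sketch is circular for the reason you yourself point out, while the lower bound $u^\kappa\ge\kappa u^1_0$ degenerates as $\kappa\to0$ and gives nothing uniform. The paper's device, which is the genuinely missing idea here, is to take $\zeta_R=\eta_R^m$ with $m>1+\frac{p-1}{2\nu}$, so that $|\nabla\zeta_R|^2+\zeta_R|\Delta\zeta_R|\le CR^{-2}\zeta_R^{2(p-1)/(p-1+2\nu)}$; H\"older then gives $CR^{-2+N(p-1)/(p-1+2\nu)}\bigl(\int_\Omega\zeta_R^2(w^\kappa)^{p-1+2\nu}\,dx\bigr)^{2\nu/(p-1+2\nu)}$, the pointwise inequality $w^\kappa\le u^\kappa$ converts $(w^\kappa)^{p-1+2\nu}$ into $(u^\kappa)^{p-1}(w^\kappa)^{2\nu}$, and stability plus Young's inequality turns the whole term into $\varepsilon'\int_\Omega|\nabla(\zeta_R(w^\kappa)^\nu)|^2\,dx+CR^{N-2-\frac{4\nu}{p-1}}$. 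With that the lemma is a one-shot estimate, no iteration needed. Your identification of \eqref{eq:nucond} as the absorption threshold and of the final Sobolev step is correct, but without these two devices the argument does not go through.
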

  \begin{proof}
    Let $\eta\in C^\infty_c(0,\infty)$ be such that
    \[
    \operatorname{supp}\eta\subset\left(\frac{1}{2},4\right),\quad 0\le\eta(t)\le 1\quad\textrm{for}\quad t>0,\quad \eta(t)=1\quad\textrm{for}\quad t\in(1,2),
    \]
    and define $\eta_R,\zeta_R\in C^\infty_c(\R^N)$ by
    \[
    \eta_R(x):=\eta\left(\frac{|x|}{R}\right),\quad\zeta_R(x):=\eta_R(x)^m,
    \]
    for $R>0$, where $m$ is an integer such that $m>1+\frac{p-1}{2\nu}$. Then $\zeta_R$ satisfies
    \begin{gather}
    \begin{gathered}
      \supp\zeta_R\subset \widetilde{\Omega}_R, \quad 0\le \zeta_R\le 1\quad\textrm{in}\quad\R^N,\quad\zeta_R=1\quad\textrm{on}\quad \Omega_R,
      \end{gathered}\label{eq:zetaR1}\\
      \begin{gathered}
      |\nabla\zeta_R|\le m\eta_R^{m-1}|\nabla\eta_R|\le CR^{-1}\zeta_R^{1-\frac{1}{m}}\le CR^{-1}\zeta_R^{\frac{p-1}{p-1+2\nu}},\\
      \zeta_R|\nabla^2\zeta_R|\le C\left(\eta_R^{2m-1}|\nabla^2\eta_R|+\eta_R^{2m-2}|\nabla\eta_R|^2\right)\le CR^{-2}\zeta_R^{2-\frac{2}{m}}\le CR^{-2}\zeta_R^\frac{2(p-1)}{p-1+2\nu}.
        \end{gathered}\label{eq:zetaR2} 
    \end{gather}
    We first observe that
    \begin{equation}\label{eq:NRG1-1}
      \begin{aligned}
        &\int_\Omega |\nabla(\zeta_R(w^\kappa)^\nu)|^2\, dx\\
        &\begin{multlined}=\int_\Omega\left(\frac{\nu^2}{2\nu-1}\nabla w^\kappa\cdot\nabla\left(\zeta_R^2 (w^\kappa)^{2\nu-1}\right)\right.\\
        \left.\qquad+2\left(\nu-\frac{\nu^2}{2\nu-1}\right)\zeta_R\nabla\zeta_R\cdot(w^\kappa)^{2\nu-1}\nabla w^\kappa+|\nabla\zeta_R|^2(w^\kappa)^{2\nu}\right)\, dx\end{multlined}\\
        &=\int_\Omega\left(\frac{\nu^2}{2\nu-1}\nabla w^\kappa\cdot\nabla(\zeta_R^2(w^\kappa)^{2\nu-1})+\left(|\nabla\zeta_R|^2-\frac{\nu-1}{2\nu-1}\operatorname{div}(\zeta_R\nabla\zeta_R)\right)(w^\kappa)^{2\nu}\right)\, dx\\
        &=\int_\Omega\left(\frac{\nu^2}{2\nu-1}\nabla w^\kappa\cdot\nabla(\zeta_R^2(w^\kappa)^{2\nu-1})+\left(\frac{\nu}{2\nu-1}|\nabla\zeta_R|^2-\frac{\nu-1}{2\nu-1}\zeta_R\Delta\zeta_R\right)(w^\kappa)^{2\nu}\right)\, dx.
      \end{aligned}
    \end{equation}
    Let $\varepsilon,\delta\in(0,1)$. It follows from \eqref{eq:wprob} and \cite{IOS01}*{Lemma 5.1} that
    \begin{equation}\label{eq:NRG1-2}
      \begin{aligned}
      &\int_\Omega\nabla w^\kappa\cdot\nabla(\zeta_R^2(w^\kappa)^{2\nu-1})\, dx\\
      &=\int_\Omega ((u^\kappa)^p-(u^\kappa_{j_*})^p)\zeta_R^2(w^\kappa)^{2\nu-1}\, dx\\
      &=\int_\Omega \left((1+\varepsilon)(u^\kappa)^{p-1}(w^\kappa+v^\kappa)+C(u^\kappa_{j_*})^{p-1+2\nu\delta}(w^\kappa+v^\kappa)^{1-2\nu\delta}\right)\zeta_R^2(w^\kappa)^{2\nu-1}\, dx\\
      &\le \int_\Omega \left((u^\kappa)^{p-1}\zeta_R^2\left((1+\varepsilon)^2(w^\kappa)^{2\nu}+C(v^\kappa)^{2\nu}\right)+C(u^\kappa_{j_*})^{p-1+2\nu\delta}\zeta_R^2\left(w^\kappa+v^\kappa\right)^{2\nu(1-\delta)}\right)\, dx.
      \end{aligned}
    \end{equation}
    By \eqref{eq:abcond}, \eqref{eq:stab}, \eqref{eq:umonotone}, \eqref{eq:zetaR1}, and $v^{\kappa^*}\in C_{\alpha_*,\beta_*}$,
    \begin{align}
      &\int_\Omega (u^\kappa)^{p-1}\zeta_R^2(w^\kappa)^{2\nu}\, dx\le \frac{1}{p}\int_\Omega|\nabla(\zeta_R(w^\kappa)^\nu)|^2\, dx,\label{eq:NRG1-3}\\
      \begin{split}
      &\int_\Omega (u^\kappa)^{p-1}\zeta_R^2(v^{\kappa^*})^{2\nu}\, dx\le\sup_{\widetilde{\Omega}_{R}}(v^{\kappa^*})^{2\nu}\int_\Omega (u^\kappa)^{p-1}\zeta_R^2\, dx\\
      &\le CR^{2\nu\alpha_*}(1+R^2)^{\nu(\beta_*-\alpha_*)}\int_\Omega|\nabla\zeta_R|^2\,dx\le CR^{s_1}(1+R^2)^\frac{s_2-s_1}{2},
  \end{split}\label{eq:NRG1-4}
    \end{align}
    where
    \begin{gather*}
      s_1:=N-2+2\nu\alpha_*>N-2+2\nu\alpha>N-2-\frac{4\nu}{p-1},\\
      s_2:=N-2+2\nu\beta_*<N-2+2\nu\beta<N-2-\frac{4\nu}{p-1}.
    \end{gather*}
    Furthermore, for $\delta\in(0,1)$, we observe from the H\"{o}lder inequality, the Poincar\'{e} inequality, $u^{\kappa^*}_{j_*}\in C_{\alpha,\beta}$, $v^{\kappa^*}\in C_{\alpha_*,\beta_*}$, and \eqref{eq:zetaR1} that
    \begin{align}
      \begin{split}
      &\int_\Omega \zeta_R^2(u^\kappa_{j_*})^{p-1+2\nu\delta}(w^\kappa)^{2\nu(1-\delta)}\, dx\\
      &\le \sup_{\widetilde{\Omega}_R}(u^{\kappa^*}_{j_*})^{p-1+2\nu\delta}\left(\int_\Omega\zeta_R^2\,dx\right)^{\delta}\left(\int_{\widetilde{\Omega}_R}\zeta_R^2w^{2\nu}\,dx\right)^{1-\delta}\\
      &\le C(R^\alpha(1+R^2)^\frac{\beta-\alpha}{2})^{p-1+2\nu\delta}R^{N\delta}\left(R^2\int_{\widetilde{\Omega}_R}|\nabla\zeta_Rw^\nu|^2\,dx\right)^{1-\delta}\\
      &\le \varepsilon\int_{\widetilde{\Omega_R}}|\nabla\zeta_Rw^\nu|^2\,dx+C\left(R^{N\delta+2(1-\delta)}(R^\alpha(1+R^2)^\frac{\beta-\alpha}{2})^{p-1+2\nu\delta}\right)^\frac{1}{\delta}\\
      &=\varepsilon\int_\Omega|\nabla(\zeta_R(w^\kappa)^\nu|^2\, dx+CR^{s_3}(1+R^2)^{s_4-s_3},
      \end{split}\label{eq:NRG1-5}\\
        &\int_\Omega \zeta_R^2(u^{\kappa}_{j_*})^{p-1+2\nu\delta}(v^\kappa)^{2\nu(1-\delta)}\, dx\le CR^N\sup_{\widetilde{\Omega}_R}U_{\alpha,\beta}^{p-1+2\nu\delta}U_{\alpha_*,\beta_*}^{2\nu(1-\delta)}\le CR^{s_5}(1+R^2)^\frac{s_6-s_5}{2},\label{eq:NRG1-6}
    \end{align}
    where
    \begin{gather*}
    s_3:=\frac{1}{\delta}\left(2(1-\delta)+(p-1)\alpha\right)+N+2\nu\alpha,\\
    s_4:=\frac{1}{\delta}\left(2(1-\delta)+(p-1)\beta\right)+N+2\nu\beta,\\
    s_5:=N+(p-1+2\nu\delta)\alpha+2\nu(1-\delta)\alpha_*,\quad s_6:=N+(p-1+2\nu\delta)\beta+2\nu(1-\delta)\beta_*.
    \end{gather*}
    Since $(p-1)\alpha_*>(p-1)\alpha>-2>(p-1)\beta>(p-1)\beta_*$ by \eqref{eq:abcond} and \eqref{eq:a*b*}, 
    \begin{gather*}
    \lim_{\delta\to+0}s_3=\infty,\quad \lim_{\delta\to +0}s_4=-\infty,\\
    \lim_{\delta\to+0}s_5=N+(p-1)\alpha+2\nu\alpha_*>N-2-\frac{4\nu}{p-1},\\
    \lim_{\delta\to+0}s_6=N+(p-1)\beta+2\nu\beta_*<N-2-\frac{4\nu}{p-1}.
  \end{gather*}
  Thus, taking sufficiently small $\delta\in(0,1)$, we have
  \[
  s_3,s_5>N-2-\frac{4\nu}{p-1},\quad s_4,s_6<N-2-\frac{4\nu}{p-1}.  
  \]
  These together with \eqref{eq:NRG1-2}--\eqref{eq:NRG1-6} imply that
  \begin{equation}\label{eq:NRG1-6.1}
  \begin{aligned}
    &\int_\Omega \nabla w^\kappa\cdot\nabla(\zeta_R^2(w^\kappa)^{2\nu-1})\, dx\\
    &\begin{multlined}
    \le\frac{(1+\varepsilon)^2}{p}\int_\Omega|\nabla(\zeta_R(w^\kappa)^\nu)|^2\, dx\\
    +CR^{s_1}(1+R^2)^\frac{s_2-s_1}{2}+CR^{s_3}(1+R^2)^\frac{s_4-s_3}{2}+CR^{s_5}(1+R^2)^\frac{s_6-s_5}{2}
      \end{multlined}\\
    &\le\frac{(1+\varepsilon)^2}{p}\int_\Omega|\nabla(\zeta_R(w^\kappa)^\nu)|^2\, dx+CR^{N-2-\frac{4\nu}{p-1}}.
  \end{aligned}
  \end{equation}
  On the other hand, it follows from \eqref{eq:stab} and \eqref{eq:zetaR2} that for any $\varepsilon'>0$,
  \begin{equation}\label{eq:NRG1-7}
    \begin{aligned}
    \int_\Omega\left(\frac{\nu}{2\nu-1}|\nabla\zeta_R|^2-\frac{\nu-1}{2\nu-1}\zeta_R\Delta\zeta_R\right)&(w^\kappa)^{2\nu}\, dx
    \le C\int_\Omega R^{-2}\zeta_R^\frac{2\nu}{p-1+2\nu}(w^\kappa)^{2\nu}\, dx\\
    &\le CR^{-2+\frac{N(p-1)}{p-1+2\nu}}\left(\int_\Omega \zeta_R^2(w^\kappa)^{p-1+2\nu}\, dx\right)^\frac{2\nu}{p-1+2\nu}\\
    &\le CR^{-2+\frac{N(p-1)}{p-1+2\nu}}\left(\int_\Omega (u^\kappa)^{p-1}\zeta_R^2(w^\kappa)^{2\nu}\, dx\right)^\frac{2\nu}{p-1+2\nu}\\
    &\le CR^{-2+\frac{N(p-1)}{p-1+2\nu}}\left(\int_\Omega |\nabla(\zeta_R(w^\kappa)^\nu)|^2\, dx\right)^\frac{2\nu}{p-1+2\nu}\\
    &\le \varepsilon'\int_\Omega |\nabla(\zeta_R(w^\kappa)^\nu)|^2\, dx+CR^{\frac{-2(p-1+2\nu)+N(p-1)}{p-1}}\\
    &=\varepsilon'\int_\Omega |\nabla(\zeta_R(w^\kappa)^\nu)|^2\, dx+CR^{N-2-\frac{4\nu}{p-1}}.
    \end{aligned}
  \end{equation}
  Combining \eqref{eq:NRG1-1}, \eqref{eq:NRG1-6.1}, and \eqref{eq:NRG1-7}, we deduce that
  \[
  \int_\Omega |\nabla(\zeta_R(w^\kappa)^\nu)|^2\, dx\le\left(\frac{(1+\varepsilon)^2}{p}\frac{\nu^2}{2\nu-1}+C_\varepsilon\varepsilon'\right)\int_\Omega |\nabla(\zeta_R(w^\kappa)^\nu)|^2\, dx+C_{\varepsilon,\varepsilon'}R^{N-2-\frac{4\nu}{p-1}}
  \]
  for any $\varepsilon,\varepsilon'\in(0,1)$. By \eqref{eq:nucond} and a suitable choice of $\varepsilon$ and $\varepsilon'$, we obtain
  \[
  \int_\Omega |\nabla(\zeta_R(w^\kappa)^\nu)|^2\, dx\le CR^{N-2-\frac{4\nu}{p-1}}.
  \]
  The proof of Lemma~\ref{Lem:NRG1} is complete with the Sobolev inequality.
    \end{proof}
  \begin{Lemma}\label{Lem:Reg1}
    Assume the same conditions as in Theorem~{\rm\ref{Thm:Thm1}}, \eqref{eq:abcond}, and that $1<p<p_{JL}$. Then
    \begin{equation}\label{eq:Reg1}
      \sup_{\substack{\kappa\in(0,\kappa^*),\\x\in\Omega}}|x|^\frac{2}{p-1}u^\kappa(x)<\infty.
    \end{equation}
  \end{Lemma}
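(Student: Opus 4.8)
The plan is to bootstrap from the $L^{2q\nu}$-type estimate \eqref{eq:NRG1} in Lemma~\ref{Lem:NRG1} to a pointwise estimate on $w^\kappa$, uniform in $\kappa\in(0,\kappa^*)$, and then combine this with the already-known uniform bound on the lower-order pieces $u^\kappa_{j_*}=\sum_{j=0}^{j_*}v^\kappa_j$ (which lies in $C_{\alpha,\beta}$ uniformly in $\kappa\le\kappa^*$, by \eqref{eq:umonotone} and Lemma~\ref{Lem:vk}, since $\beta>-2/(p-1)$). The point is that the decay rate $|x|^{-2/(p-1)}$ we want for $u^\kappa$ is already available for $u^\kappa_{j_*}$ from membership in $C_{\alpha,\beta}$, so everything reduces to estimating $w^\kappa$ on each dyadic annulus $\Omega_R$.

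First I would choose the exponent $\nu>1$ in Lemma~\ref{Lem:NRG1} large: for any $\nu$ with $\nu^2/(2\nu-1)<p$ (which holds for $\nu$ in an interval around $1$ whose right endpoint tends to $\infty$ precisely as $p\to\infty$, and in particular for $\nu$ slightly bigger than $1$ always, but for $\nu$ as large as we like when $p$ is large — here the hypothesis $p<p_{JL}$ enters through the sharp form of this constraint, cf.\ the computation in \cite{IOS01}), the estimate gives
\[
\int_{\Omega_R}(w^\kappa)^{2q\nu}\,dx\le C R^{\frac{N}{q}-\frac{4\nu}{p-1}\cdot q\cdot\frac1q}\quad\text{i.e.}\quad \left(\fint_{\Omega_R}(w^\kappa)^{2q\nu}\,dx\right)^{\frac{1}{2q\nu}}\le C R^{-\frac{2}{p-1}},
\]
after accounting for the $R^N$ volume factor. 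The key algebraic check is that the power of $R$ works out to exactly $-2/(p-1)$ after normalizing by the measure of $\Omega_R$, independently of $\nu$ and $q$ — this is why the weight $R^{4\nu/(p-1)}$ appears in \eqref{eq:NRG1}. Then I would run a local elliptic estimate (De Giorgi–Nash–Moser, or simply rescaling $\Omega_R$ to unit size and applying $L^p\to L^\infty$ estimates for $-\Delta w^\kappa=(u^\kappa)^p-(u^\kappa_{j_*})^p\le p(u^\kappa)^{p-1}w^\kappa$ together with the already-established bound on $(u^\kappa)^{p-1}$ on $\widetilde\Omega_R$ — but wait, that bound is exactly what we are proving, so one must instead use the sublinear-in-$w^\kappa$ bound $(u^\kappa)^p-(u^\kappa_{j_*})^p\le C((u^\kappa_{j_*})^{p-1}w^\kappa+(w^\kappa)^p)$ from \cite{IOS01}*{Lemma 5.1}, controlling $(u^\kappa_{j_*})^{p-1}$ uniformly by its $C_{\alpha,\beta}$ bound and treating the $(w^\kappa)^p$ term via the high-integrability of $w^\kappa$). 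A standard iteration/bootstrap then upgrades the averaged $L^{2q\nu}$ bound on $\widetilde\Omega_R$ to $\|w^\kappa\|_{L^\infty(\Omega_R)}\le CR^{-2/(p-1)}$, uniformly in $\kappa$ and $R$, which gives \eqref{eq:Reg1}.

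The main obstacle is making the local elliptic estimate genuinely uniform in $\kappa$ and scale-invariant in $R$: one must rescale $x\mapsto Rx$, under which $w^\kappa$ picks up a factor $R^{2/(p-1)}$ and the equation becomes $-\Delta \widetilde w=\widetilde f$ on a fixed annulus, with $\widetilde f$ controlled in $L^{q\nu}$ by a constant depending only on the data (not on $\kappa,R$), and then apply interior-plus-boundary $L^s$–$L^\infty$ estimates that are uniform because the rescaled domain $\widetilde\Omega_R$ has uniformly smooth boundary (as $\partial A$ is smooth, all the annuli $\Omega_R$ are bi-Lipschitz to a fixed model with uniform constants). Care is needed near the vertex and near infinity, but these are already handled by the boundary behavior encoded in $C_{\alpha,\beta}$ and by the fact that the estimate \eqref{eq:NRG1} holds for \emph{all} $R>0$. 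The subcriticality condition $p<p_{JL}$ is used precisely to guarantee that $\nu$ and $q$ can be chosen so that $2q\nu$ exceeds the exponent needed for the $L^s\to L^\infty$ estimate to close (i.e.\ $q\nu>N/2$ in the relevant range), which is where the Joseph–Lundgren threshold enters — this is the same mechanism as in \cite{IOS01}, now transplanted to the cone with the weighted spaces $C_{\alpha,\beta}$.
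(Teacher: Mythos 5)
Your proposal follows essentially the same route as the paper: reduce to bounding $w^\kappa$ using $u^\kappa\le u^{\kappa^*}_{j_*}+w^\kappa$ with $u^{\kappa^*}_{j_*}\in C_{\alpha,\beta}$, feed the annulus estimate \eqref{eq:NRG1} into a rescaled local $L^s$-to-$L^\infty$ elliptic estimate for the differential inequality satisfied by $w^\kappa$, and use $p<p_{JL}$ to choose $\nu,q$ so that the zeroth-order coefficient lies in $L^s$ with $s>N/2$ on the unit-scale annulus — including your correct observation that one must avoid the circular use of a pointwise bound on $(u^\kappa)^{p-1}$ by exploiting the sublinear splitting of $(u^\kappa)^p-(u^\kappa_{j_*})^p$. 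The only slip is the stated threshold ``$q\nu>N/2$'': since the coefficient is $(w^\kappa+u^\kappa_{j_*})^{p-1}\in L^{2q\nu/(p-1)}$, the correct requirement is $2q\nu/(p-1)>N/2$ (as in \eqref{eq:nucond2}), which does not affect the structure of the argument.
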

  \begin{proof}
  By $u^\kappa\le u^{\kappa^*}_{j_*}+w^\kappa_{j_*}$ and $u^{\kappa^*}_{j_*}\in C_{\alpha,\beta}$, it suffices to prove that 
  \[
      \sup_{\substack{\kappa\in(0,\kappa^*),\\x\in\Omega}}|x|^\frac{2}{p-1}w^\kappa(x)<\infty.
    \]
    It follows from \eqref{eq:NRG1} with $\nu=1$ and $q=1$ that
    \begin{equation}\label{eq:Reg1-0}
      \|w^\kappa\|_{L^2(\Omega_R)}\le CR^{\frac{N}{2}-\frac{2}{p-1}}.
    \end{equation}
    Furthermore, since $1<p<p_{JL}$, we find $\nu\ge 1$ and $q>1$ satisfying \eqref{eq:qcond} and
    \begin{equation}\label{eq:nucond2}
      \frac{\nu^2}{2\nu-1}>p,\quad \frac{2q}{p-1}\nu>\frac{N}{2}.
    \end{equation}
    Indeed, the case $N=2$ is clear. In the case $N=3$, we find $\nu\ge 1$ such that
    \[
    \frac{\nu^2}{2\nu-1}>p,\quad \frac{2N}{(N-2)(p-1)}\nu>\frac{N}{2}
    \]
    (see e.g., \cite{IOS01}*{Lemma 5.4}) and set $q=N/(N-2)$.
    It follows from \eqref{eq:NRG1} that
    \begin{equation}\label{eq:Reg1-1}
    \left(\int_{\widetilde\Omega_R} ((w^{\kappa})^{p-1})^{\frac{2q}{p-1}\nu}\, dx\right)^{\frac{p-1}{2q\nu}}
    \le CR^{\frac{p-1}{2\nu}\left(\frac{N}{q}-\frac{4\nu}{p-1}\right)}=CR^{\frac{N(p-1)}{2q\nu}-2}.
    \end{equation}
    On the other hand, it follows from 
$u^\kappa_{j_*}<u^{\kappa^*}_{j_*}$ and $u^{\kappa^*}_{j_*}\in C_{\alpha,\beta}$ that
    \begin{equation}\label{eq:Reg1-2}
    \left(\int_{\widetilde\Omega_R} ((u^{\kappa}_{j_*})^{p-1})^{\frac{2q}{p-1}\nu}\, dx\right)^{\frac{p-1}{2q\nu}}\le CR^{\frac{N(p-1)}{2q\nu}}\left(R^\alpha(1+R^2)^\frac{\beta-\alpha}{2}\right)^{p-1}\le CR^{\frac{N(p-1)}{2q\nu}-2}.
    \end{equation}
    Since
    \begin{equation}\label{eq:wprob'}
    0\le -\Delta w^\kappa\le p(w^\kappa+u^\kappa_{j_*})^{p-1}(w^\kappa+v^\kappa)\quad\textrm{in}\quad\Omega
    \end{equation}
    by \eqref{eq:wprob}, we deduce from \eqref{eq:a*b*}, \eqref{eq:Reg1-0}--\eqref{eq:Reg1-2}, and elliptic regularity theorems (see e.g., \cite{GT}*{Theorem 8.17}) that
    \begin{align*}
    w^\kappa(x)&\le C\left(|x|^{-\frac{N}{2}}\|w^\kappa\|_{L^2(\Omega_{|x|})}+|x|^{2-\frac{N(p-1)}{2\nu}}\|(w^\kappa+u^\kappa_{j_*})^{p-1}v^\kappa\|_{L^{\frac{2q}{p-1}\nu}(\Omega_{|x|})}\right)\\
    &\le C\left(|x|^{-\frac{2}{p-1}}+\sup_{\Omega_{|x|}}v^{\kappa^*}\right)\le C\left(|x|^{-\frac{2}{p-1}}+|x|^{\alpha_*}(1+|x|^2)^\frac{\beta_*-\alpha_*}{2}\right)\le C|x|^{-\frac{2}{p-1}},
    \end{align*}
    and the proof of Lemma~\ref{Lem:Reg1} is complete.
  \end{proof}
  \begin{Lemma}\label{Lem:NRG2}
    Assume the same conditions as in Theorem~{\rm\ref{Thm:Thm1}}. Let $\tau\in\R$ be such that
    \begin{equation}\label{eq:taucond}
    \begin{gathered}
      -\frac{N-2}{2}-\alpha<\tau<-\frac{N-2}{2}-\beta,\\
      \frac{1}{p}+\left(\left(\frac{N-2}{2}\right)^2+\Lambda\right)^{-1}\tau^2<1.
    \end{gathered}
    \end{equation}
    Then
    \begin{equation}\label{eq:NRG2}
      \sup_{\kappa\in(0,\kappa^*)}\int_{\Omega}|x|^{2\tau-2} (w^\kappa)^2\, dx<\infty.
    \end{equation}
  \end{Lemma}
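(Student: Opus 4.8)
The plan is to establish the weighted estimate \eqref{eq:NRG2} by testing the equation \eqref{eq:wprob} against the weight $|x|^{2\tau-2}w^\kappa$, cut off by the functions $\zeta_R$ introduced in the proof of Lemma~\ref{Lem:NRG1}, and then using the Hardy inequality \eqref{eq:Hardy} together with the stability inequality \eqref{eq:stab} to absorb the quadratic terms. More precisely, I would write $\psi_R:=|x|^{\tau-1}\zeta_R w^\kappa$ and compute $\int_\Omega|\nabla\psi_R|^2\,dx$ by expanding the gradient. The cross term involving $\nabla w^\kappa\cdot\nabla(|x|^{2\tau-2}\zeta_R^2 w^\kappa)$ can be rewritten, after an integration by parts and using $-\Delta w^\kappa=(u^\kappa)^p-(u^\kappa_{j_*})^p$, into a main term $\int_\Omega((u^\kappa)^p-(u^\kappa_{j_*})^p)|x|^{2\tau-2}\zeta_R^2 w^\kappa\,dx$ plus lower-order terms coming from derivatives hitting $|x|^{\tau-1}$ and $\zeta_R$. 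As in \eqref{eq:NRG1-2}, the main term is bounded using \cite{IOS01}*{Lemma 5.1}: for any $\varepsilon,\delta\in(0,1)$,
\[
(u^\kappa)^p-(u^\kappa_{j_*})^p\le (1+\varepsilon)(u^\kappa)^{p-1}(w^\kappa+v^\kappa)+C(u^\kappa_{j_*})^{p-1+2\delta}(w^\kappa+v^\kappa)^{1-2\delta},
\]
so the dominant contribution becomes $(1+\varepsilon)\int_\Omega(u^\kappa)^{p-1}|x|^{2\tau-2}\zeta_R^2(w^\kappa)^2\,dx$, which by the stability inequality \eqref{eq:stab} applied to $\psi_R$ is bounded by $\frac{1+\varepsilon}{p}\int_\Omega|\nabla\psi_R|^2\,dx+(\text{gradient-of-weight corrections})$.

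The corrections coming from $\nabla(|x|^{\tau-1})$ produce a term proportional to $\tau^2\int_\Omega|x|^{2\tau-4}\zeta_R^2(w^\kappa)^2\,dx$, to which I would apply the sharp Hardy inequality \eqref{eq:Hardy} in the form
\[
\left(\left(\tfrac{N-2}{2}\right)^2+\Lambda\right)\int_\Omega|x|^{-2}\psi_R^2\,dx\le\int_\Omega|\nabla\psi_R|^2\,dx,
\]
so this term is bounded by $((\tfrac{N-2}{2})^2+\Lambda)^{-1}\tau^2\int_\Omega|\nabla\psi_R|^2\,dx$ up to commutator terms involving $\nabla\zeta_R$. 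Collecting everything, one obtains
\[
\int_\Omega|\nabla\psi_R|^2\,dx\le\left(\tfrac{1+\varepsilon}{p}+\left(\left(\tfrac{N-2}{2}\right)^2+\Lambda\right)^{-1}\tau^2+C\varepsilon'\right)\int_\Omega|\nabla\psi_R|^2\,dx+(\text{boundary-layer remainders}).
\]
The coefficient in front is strictly less than $1$ for small $\varepsilon,\varepsilon'$ precisely because of the second condition in \eqref{eq:taucond}; hence the gradient term can be absorbed into the left-hand side, giving a bound on $\int_\Omega|\nabla\psi_R|^2\,dx$ uniform in $\kappa\in(0,\kappa^*)$ and (after letting $R\to\infty$) in $R$.

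The remainder terms must be shown to be finite uniformly in $\kappa$ and $R$, and this is where the two-sided restriction $-\tfrac{N-2}{2}-\alpha<\tau<-\tfrac{N-2}{2}-\beta$ enters. These remainders involve integrals like $\int_{\widetilde\Omega_R\setminus\Omega_R}|x|^{2\tau-4}(w^\kappa)^2\,dx$ and terms with $(u^\kappa_{j_*})^{p-1+2\delta}(w^\kappa+v^\kappa)^{1-2\delta}$ against the weight; using $u^\kappa_{j_*}\in C_{\alpha,\beta}$, $v^{\kappa^*}\in C_{\alpha_*,\beta_*}$, $w^\kappa\le C|x|^{-2/(p-1)}$ from Lemma~\ref{Lem:Reg1} (available once $p<p_{JL}$, which is part of the standing hypotheses whenever this lemma is invoked), and the local $L^2$ bound \eqref{eq:NRG1}, one checks the scaling exponents of $R$ on each dyadic annulus are summable. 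The condition on $\tau$ guarantees that $|x|^{\tau-1}U_{\alpha,\beta}(x)$ is square-integrable near both $0$ and $\infty$, so the $j_*$-level contributions are controlled. Finally, $\int_\Omega|x|^{2\tau-2}(w^\kappa)^2\,dx\le\liminf_{R\to\infty}\int_\Omega|x|^{2\tau-2}\zeta_R^2(w^\kappa)^2\,dx\le C\liminf_R\int_\Omega|\nabla\psi_R|^2\,dx<\infty$ by another application of Hardy, which yields \eqref{eq:NRG2}. The main obstacle I anticipate is the careful bookkeeping of the cut-off commutator terms: one must split the weighted gradient of $\zeta_R w^\kappa$ so that the Hardy inequality is applied to $|x|^{\tau-1}\zeta_R w^\kappa$ as a single test function, and then separately estimate every term in which a derivative lands on $\zeta_R$ rather than on $w^\kappa$ or the power weight — these live only on the annular shell $\widetilde\Omega_R\setminus\Omega_R$ and must be absorbed using Young's inequality with the same structure as in \eqref{eq:NRG1-7}, which requires choosing the exponent $m$ in $\zeta_R=\eta_R^m$ large enough and invoking \eqref{eq:NRG1} once more.
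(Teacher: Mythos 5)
Your core mechanism is the same as the paper's: form the weighted energy $\int_\Omega|\nabla(|x|^\tau\,\text{cutoff}\cdot w^\kappa)|^2\,dx$, split the nonlinear term via \cite{IOS01}*{Lemma 5.1}, bound the leading quadratic term by $\frac{(1+\varepsilon)^2}{p}$ of the energy using the stability inequality \eqref{eq:stab}, bound the $\tau^2|x|^{2\tau-2}(w^\kappa)^2$ term by $\bigl(\bigl(\frac{N-2}{2}\bigr)^2+\Lambda\bigr)^{-1}\tau^2$ of the energy using the sharp cone Hardy inequality \eqref{eq:Hardy}, and absorb using the second condition in \eqref{eq:taucond}. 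That part is correct and is exactly what the paper does.

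The genuine gap is in your choice of cutoff. You take the dyadic annular cutoffs $\zeta_R$ of Lemma~\ref{Lem:NRG1} (supported in $\widetilde\Omega_R=\{R/2<|x|<4R\}$, equal to $1$ only on $\Omega_R$) and propose to ``check that the scaling exponents on each dyadic annulus are summable.'' This fails. With a dyadic cutoff the commutator terms satisfy $|\nabla\zeta_R|^2|x|^{2\tau}\sim R^{-2}|x|^{2\tau}\sim|x|^{2\tau-2}$ on a shell comparable in measure to $\Omega_R$ itself, so they are of the \emph{same order} as the quantity being estimated and cannot be absorbed; the only a priori control available for them at this stage is \eqref{eq:NRG1} with $\nu=q=1$, which gives $\int_{\widetilde\Omega_R}|x|^{2\tau-2}(w^\kappa)^2\,dx\le CR^{2(\tau-\tau_c)}$ with $\tau_c:=\frac{2}{p-1}-\frac{N-2}{2}$, and $\sum_{j\in\mathbb{Z}}2^{2j(\tau-\tau_c)}$ diverges at one end for every $\tau\ne\tau_c$ (and logarithmically at both ends for $\tau=\tau_c$). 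Since Lemma~\ref{Lem:Reg2} needs precisely $\tau_1<\tau_c<\tau_2$, the dyadic summation can never close. Your closing step $\int_\Omega|x|^{2\tau-2}(w^\kappa)^2\,dx\le\liminf_{R\to\infty}\int_\Omega|x|^{2\tau-2}\zeta_R^2(w^\kappa)^2\,dx$ is also inconsistent with an annular $\zeta_R$, whose support escapes to infinity. The fix is the paper's: use a \emph{single} cutoff $\psi$ with two independent parameters, equal to $1$ on $\{r<|x|<R\}$ and supported in $\{r/2<|x|<2R\}$, so that after absorption the remainders $\rho_1,\rho_2$ live only on the two extreme shells $\Omega_r$ and $\Omega_R$; for each fixed $\kappa$ they are $O(r^{2\tau-2+N+2\alpha_*}+R^{2\tau-2+N+2\beta_*})\to0$ as $r\to+0$, $R\to\infty$ by $w^\kappa\in C_{\alpha_*,\beta_*}$ and \eqref{eq:abtau} (this is where the first condition in \eqref{eq:taucond} enters), and since they vanish in the limit the resulting bound is uniform in $\kappa$ even though no uniform $C_{\alpha_*,\beta_*}$ bound is yet known. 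One further small point: you correctly flag that controlling $\int(u^\kappa)^{p-1}\psi^2|x|^{2\tau}(v^\kappa)^2\,dx$ requires Lemma~\ref{Lem:Reg1} and hence $p<p_{JL}$, which is indeed how the paper argues.
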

  \begin{proof}
    Let $0<r<1$, $R>4$, and $\psi\in C^\infty(\R^N)$ be such that
    \begin{equation}\label{eq:psicut}
    \begin{gathered}
    \operatorname{\supp}\psi\subset\left\{x\in\R^N:\frac{r}{2}<|x|<2R\right\}, \quad 0\le\psi\le 1\quad\textrm{in}\quad\R^N,\\
    \psi=1\quad\textrm{on}\quad \{x\in\R^N:r<|x|<R\},\\
    |\nabla\psi|\le 4r^{-1}\quad\textrm{on}\quad \Omega_r,\quad |\nabla\psi|\le 2R^{-1}\quad\textrm{on}\quad \Omega_R.
    \end{gathered}
    \end{equation}
    We observe that
    \begin{equation}\label{eq:NRG2-2}
      \begin{aligned}
      &\int_\Omega|\nabla(\psi|x|^\tau w^\kappa)|^2\, dx\\
      &=\int_\Omega\left(\nabla w^\kappa\cdot\nabla(\psi^2|x|^{2\tau}w^\kappa)+(\tau^2|x|^{2\tau-2}+2\psi\nabla\psi\cdot \tau|x|^{2\tau-2}x+|\nabla\psi|^2|x|^{2\tau})(w^\kappa)^2\right)\, dx\\
&\le\int_\Omega\left(\psi^2|x|^{2\tau}\left((u^\kappa)^p-(u^\kappa_{j_*})^p\right)(w^\kappa)^2+\tau^2|x|^{2\tau-2}(w^\kappa)^2\right)\, dx+C\left(\rho_1[w^\kappa]+\rho_2[w^\kappa]\right),
      \end{aligned}
    \end{equation}
    where
    \[
      \rho_1[w]:=\int_\Omega \psi|\nabla\psi||x|^{2\tau-1}w^2\,dx,\quad \rho_2[w]:=\int_\Omega|\nabla\psi|^2|x|^{2\tau}w^2\, dx,
    \]
  for $w\in C(\overline{\Omega}\setminus\{0\})$.

    Let $\varepsilon,\delta\in(0,1)$. By the same argument as in \eqref{eq:NRG1-2} and \eqref{eq:NRG1-3},
    \begin{gather}
    \begin{multlined}
    \int_\Omega\psi^2|x|^{2\tau}((u^\kappa)^p-(u^\kappa_{j_*})^p)w^\kappa \, dx\\
    \le \int_\Omega \left((u^\kappa)^{p-1}\psi^2|x|^{2\tau}((1+\varepsilon)^2(w^\kappa)^2+C(v^\kappa)^2)\right.\\
    \qquad\left.+C(u^\kappa_{j_*})^{p-1+2\delta}\psi^2|x|^{2\tau}(w^\kappa+v^\kappa)^{2(1-\delta)}\right)\, dx,
      \end{multlined}\label{eq:NRG2-3}\\
      \int_\Omega (u^\kappa)^{p-1}\psi^2|x|^{2\tau}(w^\kappa)^2\, dx\le\frac{1}{p}\int_\Omega|\nabla(\psi|x|^\tau w^\kappa)|^2\, dx.\label{eq:NRG2-4}
    \end{gather}
    Furthermore, it follows from Lemma~\ref{Lem:Reg1} that
    \begin{equation}\label{eq:NRG2-5}
      \int_\Omega (u^\kappa)^{p-1}\psi^2|x|^{2\tau}(v^\kappa)^2\, dx\le C\int_\Omega |x|^{2\tau-2}(v^{\kappa^*})^2\, dx\le C\int_\Omega|x|^{2\tau-2}U_{\alpha_*,\beta_*}^2\,dx\le C.
    \end{equation}
    Here, we used the fact that \eqref{eq:taucond} implies that
    \begin{equation}\label{eq:abtau}
    2\tau+N+2\alpha-2>0,\quad 2\tau+N+2\beta-2<0.
    \end{equation}
    Furthermore, it follows from \eqref{eq:abtau} that
    \begin{equation}\label{eq:NRG2-6}
    \int_\Omega|x|^{2\tau-2}(u^{\kappa^*}_{j_*})^2\,dx\le C\int_\Omega U_{2\tau+2\alpha-2,2\tau+2\beta-2}\,dx\le C.
    \end{equation}
    Let $\delta>0$ be so small that
    \[
    (1-\delta)^{-1}(p-1)\alpha>-2-2\delta(1-\delta)^{-1},\quad (1-\delta)^{-1}(p-1)\beta<-2-2\delta(1-\delta)^{-1},
    \]
    which implies that
    \[
    (u^{\kappa^*}_{j_*})^{(1-\delta)^{-1}(p-1)}\le C|x|^{-2-2\delta(1-\delta)^{-1}}.
    \]
    Combining \eqref{eq:NRG2-6} with the H\"{o}lder inequality, the Hardy inequality \eqref{eq:Hardy}, and \eqref{eq:abtau}, we see that
    \begin{equation}\label{eq:NRG2-7}
      \begin{aligned}
        &\int_\Omega (u^\kappa_{j_*})^{p-1+2\delta}\psi^2|x|^{2\tau}(w^\kappa+v^\kappa)^{2(1-\delta)}\, dx\\
        &\le C\int_\Omega (u^{\kappa^*}_{j_*})^{p-1+2\delta}\psi^2|x|^{2\tau-2}(w^\kappa+(v^{\kappa^*})^{2(1-\delta)})^{2(1-\delta)}\, dx\\
 &\begin{multlined}\le C\left(\int_\Omega |x|^{2\tau}(u^{\kappa^*}_{j_*})^2\,dx\right)^{\delta}\\
 \quad\left(\int_\Omega (u^{\kappa^*}_{j_*})^{(1-\delta)^{-1}(p-1)}\psi^2|x|^{2\tau+2\delta(1-\delta)^{-1}}\left((w^\kappa)^2+(v^{\kappa^*})^2\right)\,dx\right)^{1-\delta}
  \end{multlined}\\
 &\le C\left(\int_\Omega |\nabla(\psi|x|^\tau(w^\kappa))|^2\,dx\right)^{(1-\delta)^{-1}}+C\left(\int_\Omega |x|^{2\tau-2}U_{\alpha_*,\beta_*}^2\,dx\right)^{(1-\delta)^{-1}}\\
 &\le \varepsilon'\int_\Omega |\nabla(\psi|x|^\tau(w^\kappa))|^2\,dx+C
      \end{aligned}
    \end{equation}
    for any $\varepsilon'>0$. On the other hand, by the Hardy inequality on infinite cones \eqref{eq:Hardy}, we obtain
    \begin{equation}\label{eq:NRG2-8}
      \int_\Omega\psi^2|x|^{2\tau-2}(w^\kappa)^2\, dx\le \left(\left(\frac{N-2}{2}\right)^2+\Lambda\right)^{-1}\int_\Omega|\nabla(\psi|x|^\tau w^\kappa)|^2\, dx.
    \end{equation}
    Combining \eqref{eq:NRG2-2}--\eqref{eq:NRG2-5}, \eqref{eq:NRG2-8}, and \eqref{eq:NRG2-9}, we deduce that
    \begin{multline*}
      \int_\Omega|\nabla(\psi|x|^\tau w^\kappa)|^2\, dx
      \le \left(\frac{(1+\varepsilon)^2}{p}+\left(\left(\frac{N-2}{2}\right)^2+\Lambda\right)^{-1}\tau^2+C_\varepsilon\varepsilon'\right)\int_\Omega|\nabla(\psi|x|^\tau w^\kappa)|^2\, dx\\
      \qquad\qquad+C_{\varepsilon,\varepsilon'}(1+\rho_1[w^{\kappa}]+\rho_2[w^{\kappa}])
    \end{multline*}
    for any $\varepsilon,\varepsilon'>0$. By \eqref{eq:taucond} and a suitable choice of $\varepsilon$ and $\varepsilon'$, we obtain
    \begin{equation}\label{eq:NRG2-9}
    \begin{multlined}
      \int_\Omega |x|^{2\tau-2}(w^\kappa)^2\, dx\le C\int_\Omega|\nabla(\psi|x|^\tau w^\kappa)|^2\, dx\le C(1+\rho_1[w^{\kappa}]+\rho_2[w^{\kappa}]).
    \end{multlined}
    \end{equation}
    Here, for any $w\in C_{\alpha_*,\beta_*}$, it follows from \eqref{eq:psicut} that
    \[
      \rho_1[w]+\rho_2[w]\le C\left(r^{2\tau-2}\int_{\Omega_r}w^2\, dx+R^{2\tau-2}\int_{\Omega_R}w^2\, dx\right)\le C\left(r^{2\tau-2}r^{N+2\alpha_*}+R^{2\tau-2}R^{N+2\beta_*}\right).
    \]
    This together with \eqref{eq:abtau} implies that
    \[
    \rho_1[w]+\rho_2[w]\to 0\quad \textrm{as}\quad r\to+0,\quad R\to\infty.
    \]
    Consequently, letting $r\to +0$ and $R\to \infty$ on \eqref{eq:NRG2-9}, we complete the proof of Lemma~\ref{Lem:NRG2}.
  \end{proof}
  \begin{Lemma}\label{Lem:Reg2}
    Assume the same conditions as in Theorem~{\rm\ref{Thm:Thm1}}, \eqref{eq:abcond}, and $1<p<p_{JL}$. Assume additionally that
    \begin{equation}\label{eq:taucond'}
      \frac{1}{p}+\left(\left(\frac{N-2}{2}\right)^2+\Lambda\right)^{-1}\left(\frac{2}{p-1}-\frac{N-2}{2}\right)^2<1.
    \end{equation}
    Then
    \begin{equation}\label{eq:Reg2}
      \sup_{\kappa\in(0,\kappa^*)}\|w^\kappa\|_{C_{\alpha_*,\beta_*}}<\infty.
    \end{equation}
  \end{Lemma}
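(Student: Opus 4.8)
The plan is to upgrade the crude uniform bound $w^\kappa(x)\le C|x|^{-2/(p-1)}$ supplied by Lemma~\ref{Lem:Reg1} to the sharp uniform bound $w^\kappa\le CU_{\alpha_*,\beta_*}$. I would do this in two stages: a single ``kick-start'' improvement driven by the weighted energy estimate of Lemma~\ref{Lem:NRG2}, followed by a finite bootstrap through the Green operator $G$. Throughout one uses that, uniformly in $\kappa\in(0,\kappa^*)$, $u^\kappa_{j_*}\le CU_{\alpha,\beta}$ and $v^\kappa\le CU_{\alpha_*,\beta_*}$ (monotonicity in $\kappa$, Lemmas~\ref{Lem:vk} and \ref{Lem:wk}), and that $(w^\kappa+u^\kappa_{j_*})^{p-1}\le C|x|^{-2}$ by Lemma~\ref{Lem:Reg1}; combined with \eqref{eq:wprob'} this gives $0\le-\Delta w^\kappa\le C|x|^{-2}\bigl(w^\kappa+U_{\alpha_*,\beta_*}\bigr)$ in $\Omega$, with $w^\kappa=0$ on $\partial\Omega\setminus\{0\}$.

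\emph{Kick-start.} Put $\tau_0:=\tfrac{2}{p-1}-\tfrac{N-2}{2}$. By \eqref{eq:abcond}, $\tau_0$ is interior to the linear constraint $-\tfrac{N-2}{2}-\alpha<\tau<-\tfrac{N-2}{2}-\beta$ of \eqref{eq:taucond}, while the extra hypothesis \eqref{eq:taucond'} says precisely that $\tau_0$ satisfies the quadratic constraint strictly; hence $\tau_0\pm\varepsilon$ is admissible in Lemma~\ref{Lem:NRG2} for every small $\varepsilon>0$, giving $\|w^\kappa\|_{L^2(\widetilde\Omega_R)}^2\le CR^{2-2\tau}$ for all $R>0$. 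Rescaling an annulus by $R$ (the cone is dilation-invariant), the equation for $v(y):=w^\kappa(Ry)$ on the fixed set $\{1/2<|y|<4\}\cap\Omega$ reads $-\Delta v\le\widetilde Vv+\widetilde f$, where $\widetilde V(y)=R^2p(w^\kappa+u^\kappa_{j_*})^{p-1}(Ry)\le C|y|^{-2}$ is bounded there by an absolute constant and $\widetilde f(y)\le C\sup_{\widetilde\Omega_R}U_{\alpha_*,\beta_*}$; since $v$ vanishes on $\partial\Omega$, the local boundedness estimate for subsolutions (e.g.\ \cite{GT}*{Theorem 8.17}, up to the boundary) gives, with constants uniform in $\kappa$ and $R$,
\[
\sup_{\Omega_R}w^\kappa\le C\Bigl(R^{-N/2}\|w^\kappa\|_{L^2(\widetilde\Omega_R)}+\sup_{\widetilde\Omega_R}U_{\alpha_*,\beta_*}\Bigr)\le C\Bigl(R^{\,1-\tau-N/2}+\sup_{\widetilde\Omega_R}U_{\alpha_*,\beta_*}\Bigr).
\]
As $1-\tau_0-N/2=-2/(p-1)$, choosing $\tau=\tau_0-\varepsilon$ for $R\le1$ and $\tau=\tau_0+\varepsilon$ for $R\ge1$ produces the strictly improved uniform bound $w^\kappa\le CU_{-2/(p-1)+\varepsilon,\,-2/(p-1)-\varepsilon}$.

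\emph{Bootstrap.} Suppose $w^\kappa\le CU_{a,b}$ uniformly, with $-N+2-\gamma<b\le-2/(p-1)\le a<\gamma$. Comparing exponents in $-\Delta w^\kappa\le p(w^\kappa+u^\kappa_{j_*})^{p-1}(w^\kappa+v^\kappa)$ and using the uniform bounds on $u^\kappa_{j_*}$ and $v^\kappa$, one gets $g:=-\Delta w^\kappa\le CU_{A-2,B-2}$ with $A=(p-1)\min\{a,\alpha\}+\min\{a,\alpha_*\}+2$ and $B=(p-1)\max\{b,\beta\}+\max\{b,\beta_*\}+2$, where $B$ is replaced by $-N+2-\gamma+\varepsilon_0$ (resp.\ $A$ by $\gamma-\varepsilon_0$) whenever it would leave $(-N+2-\gamma,\gamma)$; here $\varepsilon_0>0$ is fixed small with $-N+2-\gamma+\varepsilon_0<\beta_*$ and $\gamma-\varepsilon_0>\alpha_*$, possible by \eqref{eq:a*b*}. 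Since $g\in C^{0,\tau}_{\loc}(\Omega)\cap C_{A-2,B-2}$, $g\ge0$, $w^\kappa=0$ on $\partial\Omega\setminus\{0\}$ and $w^\kappa\in C_{a,b}$, the uniqueness in Proposition~\ref{Prop:Linear} (via Lemma~\ref{eq:LinMax}) identifies $w^\kappa$ with $G[g]$, and then \eqref{eq:LinEsti} yields $w^\kappa\le CU_{A,B}$ with $C$ still uniform in $\kappa$. Using $\alpha>-2/(p-1)$, $\beta<-2/(p-1)$ and $b\le-2/(p-1)\le a$ one checks $A\ge a$ and $B\le b$, with $A-a$ bounded below by a definite positive constant so long as $a<\alpha_*$ and $b-B$ likewise so long as $b>\beta_*$ (geometric increment while $b>\beta$, linear while $\beta_*<b\le\beta$, and symmetrically near $0$). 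Hence finitely many iterations reach $a\ge\alpha_*$ and $b\le\beta_*$, proving \eqref{eq:Reg2}.

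The heart of the matter — and the main obstacle — is that at the exact decay rate $|x|^{-2/(p-1)}$ the nonlinearity $u^p$ is scale-invariant, so a plain bootstrap (through $G$, or through the local elliptic estimate fed with the pointwise bound) is stationary and never improves. The weighted energy estimate of Lemma~\ref{Lem:NRG2} breaks this deadlock precisely because it outperforms the pointwise bound: it rests on the stability inequality \eqref{eq:stab} together with the cone's enhanced Hardy constant $((N-2)/2)^2+\Lambda$, and its admissibility at the critical weight $\tau_0=\tfrac{2}{p-1}-\tfrac{N-2}{2}$ is exactly the hypothesis \eqref{eq:taucond'}, i.e.\ $H(p-1)<0$. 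The remaining work is bookkeeping: keeping all constants uniform in $\kappa$ across the rescalings and the finitely many iterations, and controlling the exponents that approach the forbidden thresholds $-N+2-\gamma$ and $\gamma$ via the caps $-N+2-\gamma+\varepsilon_0$ and $\gamma-\varepsilon_0$.
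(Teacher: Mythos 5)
Your proposal is correct and follows essentially the same route as the paper: Lemma~\ref{Lem:NRG2} is applied at weights straddling the critical value $\tau_0=\tfrac{2}{p-1}-\tfrac{N-2}{2}$ (admissible exactly because of \eqref{eq:taucond'}), the resulting annular $L^2$ bounds are converted by local elliptic estimates into a pointwise bound with decay exponents strictly better than $-2/(p-1)$ on both ends, and a Green-operator bootstrap as in Lemma~\ref{Lem:wk} then yields the $C_{\alpha_*,\beta_*}$ bound. The paper phrases the first step with two fixed weights $\tau_1<\tau_0<\tau_2$ rather than $\tau_0\pm\varepsilon$ and compresses the bootstrap into a reference to Lemma~\ref{Lem:wk}, but the argument is the same.
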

  \begin{proof}
    It follows from \eqref{eq:abcond} that
    \[
    -\frac{N-2}{2}-\alpha<\frac{2}{p-1}-\frac{N-2}{2}<-\frac{N-2}{2}-\beta.
    \]
    We find $\tau_1$ and $\tau_2$ satisfying \eqref{eq:qcond}
    \begin{equation}\label{eq:tau12choice}
      \begin{gathered}
        -\frac{N-2}{2}-\alpha<\tau_1<\frac{2}{p-1}-\frac{N-2}{2}<\tau_2<-\frac{N-2}{2}-\beta,\\
        \frac{1}{p}+\left(\left(\frac{N-2}{2}\right)^2+\Lambda\right)^{-1}\tau_i^2<1\quad \textrm{for}\quad i\in\{1,2\}.
      \end{gathered}
    \end{equation}
    Let $i\in\{1,2\}$. It follows from \eqref{eq:NRG2} that
    \[
    \|w^\kappa\|_{L^2(\Omega_R)}^2\le R^{2-\tau_i}\int_\Omega |x|^{2\tau_i-2}(w^\kappa)^2\,dx\le CR^{2-2\tau_i}
    \]
    for $R>0$. This together with \eqref{eq:Reg1}, \eqref{eq:wprob'}, and elliptic regularity theorems implies that
    \begin{align*}
      w^\kappa(x)&\le C\left(|x|^{-\frac{N}{2}}\|w^\kappa\|_{L^2(\Omega_{|x|})}+|x|^2\sup_{\Omega_{|x|}}(w^\kappa+u^\kappa_{j_*})^{p-1}v^\kappa\right)\\
      &\le C\left(|x|^{-\frac{N}{2}}|x|^{1-\tau_i}+|x|^{\alpha_*}(1+|x|^2)^\frac{\beta_*-\alpha_*}{2}\right).
    \end{align*}
    Consequently, we obtain
    \[
      w^\kappa(x)\le C|x|^{\widetilde{\alpha}}(1+|x|^2)^\frac{\widetilde{\beta}-\widetilde{\alpha}}{2},
    \]
    where
    \[
    \widetilde{\alpha}:=\min\left\{-\frac{N-2}{2}-\tau_1,\alpha_*\right\},\quad \widetilde{\beta}:=\max\left\{-\frac{N-2}{2}-\tau_2,\beta_*\right\}.
    \]
    Furthermore, it follows from \eqref{eq:abcond}, \eqref{eq:a*b*}, and \eqref{eq:tau12choice} that
    \[
    \widetilde{\alpha}>-\frac{2}{p-1}>\widetilde{\beta}.
    \]
    Applying a similar argument to the proof of Lemma~\ref{Lem:wk}, we deduce that $\|w^\kappa\|_{C_{\alpha_*,\beta_*}}\le C$, and complete the proof of Lemma~\ref{Lem:Reg2}.
  \end{proof}
  Now we are ready to prove Proposition~\ref{Prop:Sec4}.
  \begin{proof}[Proof of Proposition {\rm\ref{Prop:Sec4}}]
    The condition \eqref{eq:taucond'} is equivalent to $H(p-1)<0$ by direct calculation. Indeed, we see that \eqref{eq:taucond'} is equivalent to $H<0$, where
    \begin{align*}
      H&:=-4\left(\left(\frac{N-2}{2}\right)^2+\Lambda\right)(p-1)^3+p(4-(N-2)(p-1))^2\\
      &\begin{multlined}=-((N-2)^2+4\Lambda)(p-1)^3+(N-2)^2(p-1)^3+((N-2)^2-8(N-2))(p-1)^2\\
      +(-8(N-2)+16)(p-1)+16\end{multlined}\\
      &=-4\Lambda(p-1)^3+(N-2)(N-10)(p-1)^2-8(N-4)(p-1)+16\\
      &=H(p-1).
    \end{align*}
    It follows from Lemma~\ref{Lem:wk} and \eqref{eq:Reg2} that the limit
    \[
    w^*(x):=\lim_{\kappa\to\kappa^*-0}w^\kappa(x)
    \]
  exists for each $x\in\Omega$. Furthermore, \eqref{eq:Reg2} and elliptic regularity theorems imply that $\{w^\kappa\}_{\kappa\in(0,\kappa^*)}$ is bounded in $C_{\alpha_*,\beta_*}$, and that $\{\nabla^2 w^\kappa\}_{\kappa\in(0,\kappa^*)}$ is bounded in $C^{0,\tau}(\Omega')$ for each compact subset $\Omega'$ of $\Omega$. Thus, $w^\kappa$ belongs to $C^2(\Omega)\cap C_{\alpha_*,\beta_*}$ and satisfies
  \[
    -\Delta w^*=(w^*+u^{\kappa^*}_{j_*+1})^p-(u^{\kappa^*}_{j_*})^p\quad\textrm{in}\quad\Omega,\quad w^*=0\quad\textrm{on}\quad\partial\Omega\setminus\{0\}.
   \]
   Consequently, $u^*:=w^*+u^{\kappa^*}_{j_*+1}\in C^2(\Omega)\cap C_{\alpha,\beta}$ is a solution to problem~\eqref{L}. The proof of Proposition~\ref{Prop:Sec4} is complete.
  \end{proof}
  \section{Proof of Theorem \ref{Thm:Thm2}}
  In this section, we prove Theorem \ref{Thm:Thm2}. 
We first note that $u\in C_{\alpha,\beta}$ is a solution to problem~\eqref{L} if and only if
\[
\Phi(u,\kappa):=u-\kappa u^1_0-G[u_+^p]=0,
\]
  where $G:C_{p\alpha,p\beta}\to C_{p\alpha+2,p\beta+2}$ is as in Section~2, and $u_+:=\max\{u,0\}$. Indeed, the ``only if'' part is obvious. The ``if'' part holds since $u=\kappa u^1_0+G[u^p]$ and 
Proposition~\ref{Prop:G}~(i) imply that $u\in C^{0,\tau}_{\loc}(\Omega)$. It is also clear that $\Phi\in C^1(C_{\alpha,\beta}\times(0,\infty),C_{\alpha,\beta})$ and
  \begin{equation}\label{eq:Phidif}
  \Phi_u(u,\kappa)h=h-G[pu_+^{p-1}h],\quad \Phi_\kappa(u,\kappa)=-u^1_0,
  \end{equation}
  for any $u,h\in C_{\alpha,\beta}$, $\kappa>0$.
  \begin{Lemma}\label{Lem:lam*}
    Assume the same conditions as in Theorem~{\rm\ref{Thm:Thm2}}. Let $\kappa\in\mathcal{K}$. If $\lambda^\kappa>1$, then $\kappa<\kappa^*$. Furthermore, $\lambda^{\kappa^*}=1$.
  \end{Lemma}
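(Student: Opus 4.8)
**Proof plan for Lemma~\ref{Lem:lam*}.**

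The plan is to exploit the implicit function theorem together with the stability of the minimal solution. First I would prove the first assertion: suppose $\kappa\in\mathcal{K}$ with $\lambda^\kappa>1$. Then for the operator $\Phi_u(u^\kappa,\kappa)h=h-G[p(u^\kappa)^{p-1}h]$ on $C_{\alpha,\beta}$, the strict inequality $\lambda^\kappa>1$ should imply that $1$ is not an eigenvalue of the compact operator $h\mapsto G[p(u^\kappa)^{p-1}h]$; here compactness of this operator on $C_{\alpha,\beta}$ comes from Remark~\ref{Rmk:Gcpt} once we know, via Lemma~\ref{Lem:Reg1} (or directly from $u^\kappa\le u^{\kappa^*}_{j_*}+w^\kappa$ with the already-established decay), that $p(u^\kappa)^{p-1}\in C_{-2+\delta,-2-\delta}$ for some $\delta>0$. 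Actually the decay needed is exactly $|x|^{2/(p-1)}u^\kappa$ bounded, i.e. Lemma~\ref{Lem:Reg1}, which is available under the hypotheses of Theorem~\ref{Thm:Thm2}. Given invertibility of $\Phi_u(u^\kappa,\kappa)$, the implicit function theorem yields a $C^1$ curve $\kappa'\mapsto u(\kappa')$ of solutions near $\kappa$; combined with the fact (from \eqref{eq:Phidif} and the sign of $G$) that the branch is increasing in $\kappa$, this produces solutions for $\kappa'$ slightly larger than $\kappa$, hence $\kappa<\sup\mathcal{K}=\kappa^*$.

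The precise link between $\lambda^\kappa>1$ and invertibility of $\Phi_u(u^\kappa,\kappa)$ is the technical heart. The eigenvalue problem \eqref{E} is phrased in $\mathcal{D}^{1,2}_0(\Omega)$, whereas $\Phi_u$ acts on $C_{\alpha,\beta}$; so I would argue that any $h\in C_{\alpha,\beta}$ with $h=G[p(u^\kappa)^{p-1}h]$ lies in $\mathcal{D}^{1,2}_0(\Omega)$ (using Proposition~\ref{Prop:G}~(iii), exactly as in the proof of \eqref{eq:phiG} in Lemma~\ref{Lem:phiimp}) and solves $-\Delta h=p(u^\kappa)^{p-1}h$ weakly, i.e. is an eigenfunction of \eqref{E} with eigenvalue $1<\lambda^\kappa$. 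Since $\lambda^\kappa$ is the \emph{first} eigenvalue and it exceeds $1$, no such nonzero $h$ exists, so $\mathrm{Id}-G[p(u^\kappa)^{p-1}\,\cdot\,]$ is injective; being a compact perturbation of the identity, it is invertible by the Fredholm alternative. This is the step I expect to be the main obstacle, since it requires matching the variational and pointwise-space formulations and invoking the characterization of $\lambda^\kappa$ in Lemma~\ref{Lem:EVP}; once it is in place the rest is soft.

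For the second assertion $\lambda^{\kappa^*}=1$: by Proposition~\ref{Prop:Sec4} we have $\kappa^*\in\mathcal{K}$, so $u^{\kappa^*}$ exists and $\lambda^{\kappa^*}$ is defined. If $\lambda^{\kappa^*}>1$, the first part (applied with $\kappa=\kappa^*$) would give $\kappa^*<\kappa^*$, a contradiction. On the other hand, $\lambda^{\kappa^*}\ge1$ follows by passing to the limit $\kappa\to\kappa^*-0$ in the stability inequality \eqref{eq:stab}: indeed $\lambda^\kappa>1$ for all $\kappa<\kappa^*$ by Proposition~\ref{Prop:stabil}, and the quadratic form $\psi\mapsto\int_\Omega p(u^\kappa)^{p-1}\psi^2$ is monotone in $\kappa$ since $u^\kappa$ is increasing in $\kappa$, so $\lambda^{\kappa^*}=\lim_{\kappa\to\kappa^*-0}\lambda^\kappa\ge1$; combining the two inequalities gives $\lambda^{\kappa^*}=1$. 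Alternatively, one simply records that $\lambda^{\kappa^*}\ge1$ because $u^{\kappa^*}\ge u^\kappa$ forces the Rayleigh quotient to increase, and $\lambda^{\kappa^*}\le1$ because $\lambda^{\kappa^*}>1$ is excluded; either route closes the proof.
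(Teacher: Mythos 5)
Your plan is essentially the paper's own proof: invertibility of $\Phi_u(u^\kappa,\kappa)$ on $C_{\alpha,\beta}$ via injectivity (kernel elements are bootstrapped into $C_{\alpha_*,\beta_*}$ by the iteration of Lemma~\ref{Lem:vk} and then placed in $\mathcal{D}^{1,2}_0(\Omega)$ by Proposition~\ref{Prop:G}~(iii), whence $\lambda^\kappa>1$ kills them) combined with compactness and the Fredholm alternative, then the implicit function theorem for the first claim, and $\lambda^{\kappa^*}\ge 1$ by passing to the limit in \eqref{eq:stab} using $u^{\kappa^*}\le\lim_{\kappa\to\kappa^*-0}u^\kappa$. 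Two small corrections: the compactness of $h\mapsto G[p(u^\kappa)^{p-1}h]$ follows already from $u^\kappa\in C_{\alpha,\beta}$ together with \eqref{eq:paa2} and Proposition~\ref{Prop:G}~(ii), so Lemma~\ref{Lem:Reg1} is not needed there (and the regularity upgrade for kernel elements is the bootstrap of Lemma~\ref{Lem:vk}, not the argument of Lemma~\ref{Lem:phiimp}, which runs in the opposite direction); and your parenthetical ``alternative'' that $u^{\kappa^*}\ge u^\kappa$ forces the Rayleigh quotient to increase is backwards---a larger potential lowers the first eigenvalue---so only your limit argument for $\lambda^{\kappa^*}\ge1$ is valid.
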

  \begin{proof}
    We first prove that $\lambda^\kappa>1\implies \kappa<\kappa^*$. It suffices to prove that $\Phi_u(u^\kappa,\kappa): C_{\alpha,\beta}\to C_{\alpha,\beta}$ is an isomorphism of Banach spaces. Indeed, this together with the implicit function theorem implies that $\Phi(U,\kappa')=0$ for some $U\in C_{\alpha,\beta}$ and $\kappa'>\kappa$, and $U$ is a solution to problem \eqref{L} with $\kappa$ replaced by $\kappa'$. Thus $\kappa<\kappa'\le\kappa^*$.

    We first prove that
    \begin{equation}\label{eq:Phidifker1}
      \operatorname{Ker}\Phi_u(u^\kappa,\kappa)=\{0\}.
    \end{equation}
    Let $\phi\in\operatorname{Ker}\Phi_u(u^\kappa,\kappa)$. It follows from \eqref{eq:Phidif} that
    \[
    -\Delta\phi=p(u^\kappa)^{p-1}\phi\quad\textrm{in}\quad\Omega,\quad \phi=0\quad\textrm{on}\quad\partial\Omega\setminus\{0\}.
    \]
    By a similar argument to the proof of Lemma~\ref{Lem:vk}, we observe that $\phi\in C_{\alpha_j,\beta_j}$ for all $j\in\{1,2,\ldots\}$, and thus $\phi\in C_{\alpha_*,\beta_*}$. This together with Proposition~\ref{Prop:G}~(iii) and \eqref{eq:D12check} implies that $\phi\in\mathcal{D}^{1,2}_0(\Omega)$. But since $\lambda^\kappa>1$, this implies that $\phi\equiv 0$. Thus \eqref{eq:Phidifker1} holds.

    It remains to prove that
    \begin{equation}\label{eq:Phidifim1}
      \operatorname{Im}\Phi_u(u^\kappa,\kappa)=C_{\alpha,\beta}.
    \end{equation}
    Since the operator $G$ is compact from $C_{p\alpha,p\beta}$ to $C_{\alpha,\beta}$ by \eqref{eq:paa2} and Proposition~\ref{Prop:G}~(ii), the operator $h\mapsto G[(u^\kappa)^{p-1}h]$ is compact on $C_{\alpha,\beta}$. This together with \eqref{eq:Phidif} and the Fredholm alternative theorem implies that \eqref{eq:Phidifker1} implies \eqref{eq:Phidifim1}. Thus $\lambda^\kappa>1\implies \kappa<\kappa^*$ holds.

    We next prove that $\lambda^{\kappa^*}=1$. It suffices to prove that $\lambda^{\kappa^*}\ge 1$. It follows from the proof of Proposition~\ref{Prop:Sec4} that
    \[
    u^{\kappa^*}\le u^*=u^{\kappa^*}_{j_*+1}+w^*=\lim_{\kappa\to\kappa^*-0}(u^\kappa_{j_*+1}+w^\kappa)=\lim_{\kappa\to\kappa^*-0}u^\kappa.
    \]
    This together with \eqref{eq:stab} implies that
    \[
    \int_\Omega p(u^{\kappa^*})^{p-1}\phi^2\, dx\le\liminf_{\kappa\to\kappa^*-0}\int_\Omega p(u^{\kappa})^{p-1}\phi^2\, dx\le\int_\Omega|\nabla\phi|^2\, dx
    \]
    for any $\phi\in\mathcal{D}^{1,2}_0(\Omega)$. This derives $\lambda^{\kappa^*}\ge 1$, and the proof of Lemma~\ref{Lem:lam*} is complete.
  \end{proof}
  \begin{Lemma}\label{Lem:k*imker}
    Assume the same conditions as in Theorem~{\rm\ref{Thm:Thm1}}. Then
    \begin{align}
      \operatorname{Ker}\Phi_u(u^{\kappa^*},\kappa^*)&=\operatorname{span}\{\phi^{\kappa^*}\},\label{eq:Phidifker2}\\
      \operatorname{Im}\Phi_u(u^{\kappa^*},\kappa^*)&=\left\{h\in C_{\alpha,\beta}:\ \int_\Omega p(u^{\kappa^*})^{p-1}\phi^{\kappa^*}h\, dx=0\right\}.\label{eq:Phidifim2}
    \end{align}
  \end{Lemma}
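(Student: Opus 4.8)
The plan is to treat $\Phi_u:=\Phi_u(u^{\kappa^*},\kappa^*)$, which by \eqref{eq:Phidif} has the form $\Phi_u h=h-Th$ with $Th:=G[p(u^{\kappa^*})^{p-1}h]$, as ``identity minus a compact operator'' and invoke the Fredholm alternative. Indeed, since $u^{\kappa^*}\in C_{\alpha,\beta}$ we have $p(u^{\kappa^*})^{p-1}h\in C_{p\alpha,p\beta}$ for every $h\in C_{\alpha,\beta}$, and by \eqref{eq:paa2} together with Proposition~\ref{Prop:G}~(ii) the operator $G$ is compact from $C_{p\alpha,p\beta}$ into $C_{\alpha,\beta}$; hence $T$ is compact on $C_{\alpha,\beta}$. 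For \eqref{eq:Phidifker2}, if $\phi\in\operatorname{Ker}\Phi_u$ then $\phi=G[p(u^{\kappa^*})^{p-1}\phi]$, and bootstrapping exactly as in the proof of Lemma~\ref{Lem:lam*} (i.e.\ the induction of Lemma~\ref{Lem:vk}) gives $\phi\in C_{\alpha_*,\beta_*}$; then Proposition~\ref{Prop:G}~(iii) and \eqref{eq:D12check} yield $\phi\in\mathcal{D}^{1,2}_0(\Omega)$ with $-\Delta\phi=p(u^{\kappa^*})^{p-1}\phi$ weakly. Since $\lambda^{\kappa^*}=1$ by Lemma~\ref{Lem:lam*}, $\phi$ is an eigenfunction of \eqref{E} (with $\kappa=\kappa^*$) at its first eigenvalue, so $\phi\in\operatorname{span}\{\phi^{\kappa^*}\}$ by Lemma~\ref{Lem:EVP}; the reverse inclusion is immediate from \eqref{eq:phiG} with $\lambda^{\kappa^*}=1$, which reads $\phi^{\kappa^*}=G[p(u^{\kappa^*})^{p-1}\phi^{\kappa^*}]$, i.e.\ $\phi^{\kappa^*}\in\operatorname{Ker}\Phi_u$. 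This proves \eqref{eq:Phidifker2} and in particular $\dim\operatorname{Ker}\Phi_u=1$.

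Next I would introduce $L(h):=\int_\Omega p(u^{\kappa^*})^{p-1}\phi^{\kappa^*}h\,dx$ and check that $L$ is a nonzero bounded linear functional on $C_{\alpha,\beta}$: using $u^{\kappa^*}\in C_{\alpha,\beta}$ and $\phi^{\kappa^*}\in C_{\alpha_*,\beta_*}$, the weight $p(u^{\kappa^*})^{p-1}\phi^{\kappa^*}$ lies in $C_{(p-1)\alpha+\alpha_*,(p-1)\beta+\beta_*}$, so multiplying it by $U_{\alpha,\beta}$ produces an $L^1(\Omega)$ function thanks to $p\alpha+\alpha_*>-N>p\beta+\beta_*$ from \eqref{eq:a*b*2}, while $L(\phi^{\kappa^*})=\int_\Omega p(u^{\kappa^*})^{p-1}(\phi^{\kappa^*})^2\,dx>0$. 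Hence $\ker L=\{h\in C_{\alpha,\beta}:L(h)=0\}$ is a closed subspace of $C_{\alpha,\beta}$ of codimension one.

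The heart of the argument is then to show $\operatorname{Im}\Phi_u\subset\ker L$. For $g\in C_{\alpha,\beta}$ put $h:=\Phi_u g=g-G[p(u^{\kappa^*})^{p-1}g]$. Using $\phi^{\kappa^*}=G[p(u^{\kappa^*})^{p-1}\phi^{\kappa^*}]$ and the fact that both $\phi^{\kappa^*}$ and $G[p(u^{\kappa^*})^{p-1}g]$ solve the respective Poisson equations in $\Omega$ and vanish on $\partial\Omega\setminus\{0\}$ with the decay encoded by $C_{\alpha_*,\beta_*}$ and $C_{\alpha,\beta}$, an integration by parts (performed on truncated cones $\{r<|x|<R\}$, whose error terms on $|x|=r$ and $|x|=R$ tend to $0$ as $r\to+0$, $R\to\infty$ precisely because $p\alpha+\alpha_*>-N$ and $p\beta+\beta_*<-N$) gives the symmetry relation
\begin{align*}
\int_\Omega p(u^{\kappa^*})^{p-1}\phi^{\kappa^*}\,G[p(u^{\kappa^*})^{p-1}g]\,dx
&=\int_\Omega\nabla\phi^{\kappa^*}\cdot\nabla G[p(u^{\kappa^*})^{p-1}g]\,dx\\
&=\int_\Omega G[p(u^{\kappa^*})^{p-1}\phi^{\kappa^*}]\,p(u^{\kappa^*})^{p-1}g\,dx\\
&=\int_\Omega p(u^{\kappa^*})^{p-1}\phi^{\kappa^*}g\,dx,
\end{align*}
whence $L(h)=L(g)-L(g)=0$. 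Finally, the Fredholm alternative guarantees that $\operatorname{Im}\Phi_u$ is closed with $\operatorname{codim}\operatorname{Im}\Phi_u=\dim\operatorname{Ker}\Phi_u=1$; a codimension-one subspace contained in the codimension-one subspace $\ker L$ must coincide with it, which is \eqref{eq:Phidifim2}.

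The step I expect to be the only genuine difficulty is making the displayed symmetry relation rigorous: one has to check absolute convergence of all the integrals involved in the weighted spaces $C_{\alpha,\beta}$ and $C_{\alpha_*,\beta_*}$, and that the truncation boundary terms near the vertex $0$ and near infinity vanish in the limit, which is a careful but routine computation relying on the exponent inequalities \eqref{eq:a*b*}--\eqref{eq:a*b*3}. Everything else is the standard ``simple eigenvalue plus Fredholm alternative'' package, and the identifications of $\operatorname{Ker}$ and $\operatorname{Im}$ then feed directly into the bifurcation argument of the next section.
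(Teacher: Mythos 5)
Your proposal is correct and follows essentially the same route as the paper: the kernel is identified by bootstrapping into $C_{\alpha_*,\beta_*}$, passing to $\mathcal{D}^{1,2}_0(\Omega)$, and invoking the simplicity of the first eigenvalue with $\lambda^{\kappa^*}=1$, while the image is pinned down by combining the Fredholm alternative (codimension one) with the inclusion $\operatorname{Im}\Phi_u\subset\ker L$. Your ``symmetry of $G$'' computation is exactly the paper's step of testing $-\Delta(f-h)=p(u^{\kappa^*})^{p-1}f$ against $\psi\phi^{\kappa^*}$ on truncated cones, with the boundary/commutator terms controlled by \eqref{eq:philocN} and the exponent inequalities \eqref{eq:a*b*2}--\eqref{eq:a*b*3}, so the step you flag as the only genuine difficulty is handled in the paper precisely as you anticipate.
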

  \begin{proof}
    We first prove \eqref{eq:Phidifker2}. By the same argument as in the proof of \eqref{eq:Phidifker1}, we observe that
    \[
    \operatorname{Ker}\Phi_u(u^{\kappa^*},\kappa^*)\subset\left\{\phi\in\mathcal{D}^{1,2}_0(\Omega):\ -\Delta\phi=p(u^{\kappa^*})^{p-1}\phi\quad\textrm{in}\quad\Omega\right\}.
    \]
    This together with \eqref{eq:EVP} and Lemma~\ref{Lem:lam*} implies that $ \operatorname{Ker}\Phi_u(u^{\kappa^*},\kappa^*)\subset\operatorname{span}\{\phi^{\kappa^*}\}$. On the other hand, since $\lambda^{\kappa^*}=1$, \eqref{eq:phiG} implies that $\phi^{\kappa^*}\in \operatorname{Ker}\Phi_u(u^{\kappa^*},\kappa^*)$. Hence \eqref{eq:Phidifker2} holds.

    We next prove \eqref{eq:Phidifim2}. Since
    \[
    (u^{\kappa^*})^{p-1}\phi^{\kappa^*}\in C_{(p-1)\alpha+\alpha_*,(p-1)\beta+\beta_*}
    \]
    and
    \[
    (p-1)\alpha+\alpha_*+\alpha>-N,\quad (p-1)\beta+\beta_*+\beta<-N
    \]
    by \eqref{eq:a*b*2}, the closed subspace
    \begin{equation}\label{eq:Z}
      Z:=\left\{h\in C_{\alpha,\beta}: \int_\Omega p(u^{\kappa^*})^{p-1}\phi^{\kappa^*}h\, dx=0\right\}
    \end{equation}
    of $C_{\alpha,\beta}$ is well-defined and satisfies $\operatorname{codim}Z=1$.
    
    It follows from \eqref{eq:Phidifker2} and the Fredholm alternative theorem that
    \begin{equation}\label{eq:Phidifim2-1}
      \operatorname{codim}\operatorname{Im}\Phi_u(u^{\kappa^*},\kappa^*)=\operatorname{dim}\operatorname{Ker}\Phi_u(u^{\kappa^*},\kappa^*)=1.
    \end{equation}
    We prove that
    \begin{equation}\label{eq:Phidifim2-2}
      \operatorname{Im}\Phi_u(u^{\kappa^*},\kappa^*)\subset Z.
    \end{equation}
    Let $h\in \operatorname{Im}\Phi_u(u^{\kappa^*},\kappa^*)$ and $f\in C_{\alpha,\beta}$ be such that $\Phi_u(u^{\kappa^*},\kappa^*)f=h$. It follows from Proposition~\ref{Prop:G}~(iii) and \eqref{eq:Phidif} that
    \begin{equation}\label{eq:Phidifim2-3}
      -\Delta(f-h)=p(u^{\kappa^*})^{p-1}f\jn\Omega,\quad f-h\in D^{1,2}_0(\Omega).
    \end{equation}
    Let $\psi\in C^\infty(\R^N)$ be as in \eqref{eq:psicut}. It follows from \eqref{eq:EVP} and \eqref{eq:Phidifim2-3} that
    \begin{multline*}
      \int_\Omega p(u^{\kappa^*})^{p-1}\phi^{\kappa^*}\psi(f-h)\, dx=\int_\Omega \nabla\phi^{\kappa^*}\cdot\nabla(\psi(f-h))\, dx\\
      =\int_\Omega \nabla(\psi\phi^{\kappa^*})\cdot\nabla(f-h)\, dx+\int_\Omega((\nabla\phi^{\kappa^*}\cdot\nabla\psi)(f-h)-\phi^{\kappa^*}\nabla\psi\cdot\nabla(f-h))\, dx\\
      =\int_\Omega p(u^{\kappa^*})^{p-1}\psi\phi^{\kappa^*}f\, dx+\int_\Omega (2\nabla\phi^{\kappa^*}\cdot\nabla\psi+\phi^{\kappa^*}\Delta\psi)(f-h)\, dx,
    \end{multline*}
    and thus
    \begin{equation}\label{eq:Phidifim2-4}
      \int_\Omega p(u^{\kappa^*})^{p-1}\phi^{\kappa^*}\psi h\, dx=\int_\Omega (2\nabla\phi^{\kappa^*}\cdot\nabla\psi+\phi^{\kappa^*}\Delta\psi)(f-h)\, dx.
    \end{equation}
    Here, it follows from $f-h\in C_{\alpha,\beta}$, Lemma~\ref{Lem:phiimp}, and \eqref{eq:psicut} that
    \begin{align*}
      &\left|\int_\Omega (2\nabla\phi^{\kappa^*}\cdot\nabla\psi+\phi^{\kappa^*}\Delta\psi)(f-h)\, dx\right|\\
      &\le C\left(r^\frac{N}{2}\|\nabla\phi^{\kappa^*}\|_{L^2(\Omega_\frac{r}{2})}\sup_{\Omega_\frac{r}{2}}|\nabla\psi|+r^N\sup_{\Omega_\frac{r}{2}}(\phi^{\kappa^*}|\Delta\psi|)\right)r^{\alpha}(1+r^2)^\frac{\beta-\alpha}{2}\\
      &\qquad+C\left(R^\frac{N}{2}\|\nabla\phi^{\kappa^*}\|_{L^2(\Omega_{R})}\sup_{\Omega_{R}}|\nabla\psi|+R^N\sup_{\Omega_R}(\phi^{\kappa^*}|\Delta\psi|)\right)R^{\alpha}(1+R^2)^\frac{\beta-\alpha}{2}\\
      &\le C\left(r^\frac{N-2}{2}\cdot r^{\alpha_*+\frac{N-2}{2}}(1+r^2)^{\frac{\beta_*-\alpha_*}{2}}+r^{N-2}\cdot r^{\alpha_*}(1+r^2)^\frac{\beta_*-\alpha_*}{2}\right)r^{\alpha}(1+r^2)^\frac{\beta-\alpha}{2}\\
      &\qquad +C\left(R^\frac{N-2}{2}\cdot R^{\alpha_*+\frac{N-2}{2}}(1+R^2)^{\frac{\beta_*-\alpha_*}{2}}+R^{N-2}\cdot R^{\alpha_*}(1+R^2)^\frac{\beta_*-\alpha_*}{2}\right)R^{\alpha}(1+R^2)^\frac{\beta-\alpha}{2}\\
      &\le C\left(r^{N-2+\alpha_*+\alpha}+R^{N-2+\beta_*+\beta}\right).
    \end{align*}
    This together with
    \[
    N-2+\alpha_*+\alpha>0,\quad N-2+\beta_*+\beta<0
    \]
    by \eqref{eq:a*b*3} implies that
    \[
    \lim_{r\to+0,R\to\infty}\left|\int_\Omega (2\nabla\phi^{\kappa^*}\cdot\nabla\psi+\phi^{\kappa^*}\Delta\psi)(h-f)\, dx\right|=0.
    \]
    Consequently, letting $r\to+0$ and $R\to\infty$ in \eqref{eq:Phidifim2-4}, we deduce that
    \[
     \int_\Omega p(u^{\kappa^*})^{p-1}\phi^{\kappa^*}\psi h\, dx=0,
    \]
    which implies that $h\in Z$.

    Since $\operatorname{codim}Z=1$, \eqref{eq:Phidifim2-1} and \eqref{eq:Phidifim2-2} imply \eqref{eq:Phidifim2}. The proof of Lemma~\ref{Lem:k*imker} is complete.    
  \end{proof}
  
  Now we are ready to prove Theorem~\ref{Thm:Thm2}.
  \begin{proof}[Proof of Theorem {\rm\ref{Thm:Thm2}}]
    By Theorem~\ref{Thm:Thm1} and Proposition~\ref{Prop:Sec4}, it remains to prove the uniqueness of solution $u\in C^2(\Omega)\cap C_{\alpha,\beta}$ to problem~\eqref{L} with $\kappa=\kappa^*$, and the multiplicity assertion (iii).

    We first prove the uniqueness of solution $u\in C^2(\Omega)\cap C_{\alpha,\beta}$ to problem~\eqref{L} with $\kappa=\kappa^*$. Let $\widetilde{u}\in C^2(\Omega)\cap C_{\alpha,\beta}$ be a solution to problem~\eqref{L} with $\kappa=\kappa^*$. It follows from the same argument as in Lemma~\ref{Lem:wk} that
    \[
    \widetilde{w}:=\widetilde{u}-u^{\kappa^*}_{j_*+1}\in\mathcal{D}^{1,2}_0(\Omega),\quad -\Delta\widetilde{w}=(\widetilde{u})^p-(u^{\kappa^*}_{j_*})^p.
    \]
    We see that
    \[
   \widetilde{u}-u^{\kappa^*}_{j_*}=\widetilde{w}-w^{\kappa_*}_{j_*}\in\D^{1,2}_0(\Omega),\quad -\Delta(\widetilde{u}-u^{\kappa_*}_{j_*})=(\widetilde{u})^p-(u^{\kappa^*})^p\quad\textrm{in}\quad\Omega
    \]
    in the weak sense. This together with Lemma~\ref{Lem:lam*} implies that
    \[
    \int_\Omega p(u^{\kappa^*})^{p-1}\phi^{\kappa^*}(\widetilde{u}-u^{\kappa^*})\, dx=\int_\Omega\nabla\phi^{\kappa^*}\cdot\nabla(\widetilde{u}-u^{\kappa^*})\, dx=\int_\Omega \left((\widetilde{u})^p-(u^{\kappa^*})^p\right)\phi^{\kappa^*}\, dx.
    \]
    Since $t^p-s^p>ps^{p-1}(t-s)$ for any $t>s>0$, we consequently obtain $\widetilde{u}=u^{\kappa^*}$ in $\Omega$.
    
    We finally prove assertion (iii). Let $Z\subset C_{\alpha,\beta}$ be as in \eqref{eq:Z}. It follows from \eqref{eq:Phidif} that
    \[
    \Phi_\kappa(u^{\kappa^*},\kappa^*)<0\quad\textrm{in}\quad\Omega.
    \]
    We see that
    \[
    \int_\Omega p(u^{\kappa^*})^{p-1}\phi^{\kappa^*}\Phi_\kappa(u^{\kappa^*},\kappa^*)\, dx<0,
    \]
    and $\Phi_\kappa(u^{\kappa^*},\kappa^*)\notin Z$ by \eqref{eq:Phidifim2}. This together with Lemma~\ref{Lem:k*imker} enables us to apply the following bifurcation theorem.
  \begin{Proposition}\cite{CR1}*{Theorem 3.2}
Let $X$, $Y$ be two Banach spaces, $U\subset X\times\R$ be an open set, and $F=F(u,\kappa):U\to Y$ is a $C^1$ map. Let $(u^*,\kappa^*)\in U$ be such that
\begin{gather*}
F(u^*,\kappa^*)=0,\quad \dim\left(\Ker (F_u(u^*,\kappa^*))\right)=\operatorname{codim} (\Im F_u(u^*,\kappa^*))=1,\\
F_\kappa(u^*,\kappa^*)\notin \Im (F_u(u^*,\kappa^*)).
\end{gather*}
Let $\phi\in X$ be such that $\Ker(F_u(u^*,\kappa^*))=\operatorname{span}\{\phi\}$, and $Z\subset X$ be a complementary subspace of $\Ker(F_u(u^*,\kappa^*))$, and assume that $F_\kappa(u^*,\kappa^*)\notin Z$. Then solutions $(u,\kappa)$ to equation $F(u,\kappa)=0$ near $(u^*,\kappa^*)$ form a $C^1$ curve
\[
(u(s),\kappa(s))=(u^*+s\phi+z(s),\kappa^*+\tau(s)),
\]
with $(z,\tau):(-\delta,\delta)\to Z\times\R$, $z(0)=0$,  $\tau(0)=0$, and $\tau'(0)=0$.
\end{Proposition}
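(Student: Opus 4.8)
The plan is to establish the proposition by a Lyapunov--Schmidt reduction followed by two applications of the implicit function theorem. Write $L:=F_u(u^*,\kappa^*)$. Since $\Ker L=\operatorname{span}\{\phi\}$ is one-dimensional and $Z$ is a closed complement of it, we have the topological direct sum $X=\operatorname{span}\{\phi\}\oplus Z$ with continuous projections. Because $\operatorname{codim}\Im L=1$ is finite and $L$ is bounded, $\Im L$ is closed; together with $F_\kappa(u^*,\kappa^*)\notin\Im L$ this gives the topological direct sum $Y=\Im L\oplus\operatorname{span}\{F_\kappa(u^*,\kappa^*)\}$. Let $P:Y\to\Im L$ and $Q:=I-P$ be the associated continuous projections, so that $Q|_{\Im L}=0$ and $QF_\kappa(u^*,\kappa^*)=F_\kappa(u^*,\kappa^*)$. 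Writing $u=u^*+s\phi+z$ with $s\in\R$ and $z\in Z$, the equation $F(u,\kappa)=0$ is equivalent to the pair
\[
PF(u^*+s\phi+z,\kappa)=0,\qquad QF(u^*+s\phi+z,\kappa)=0.
\]

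First I would solve the auxiliary ($P$-)equation. Set $\mathcal{G}(s,z,\kappa):=PF(u^*+s\phi+z,\kappa)$, a $C^1$ map into $\Im L$ with $\mathcal{G}(0,0,\kappa^*)=0$ and $\mathcal{G}_z(0,0,\kappa^*)=PL|_Z=L|_Z$. Since $L\phi=0$ we have $\Im(L|_Z)=\Im L$, and since $\Ker L\cap Z=\{0\}$ the restriction $L|_Z$ is injective; thus $L|_Z:Z\to\Im L$ is a continuous bijection between Banach spaces, hence an isomorphism by the bounded inverse theorem. The implicit function theorem then provides $\delta>0$ and a $C^1$ map $(s,\kappa)\mapsto z(s,\kappa)\in Z$, defined for $|s|$ and $|\kappa-\kappa^*|$ small, with $z(0,\kappa^*)=0$, which is the \emph{unique} small solution of the $P$-equation.

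Next I would treat the bifurcation ($Q$-)equation. Identifying $\operatorname{span}\{F_\kappa(u^*,\kappa^*)\}$ with $\R$ via its basis, define the scalar function $g(s,\kappa):=QF(u^*+s\phi+z(s,\kappa),\kappa)$; by the uniqueness in the previous step, zeros of $F$ near $(u^*,\kappa^*)$ correspond exactly to zeros of $g$. Differentiating in $\kappa$ and using $Q(Lz_\kappa)=0$ gives $g_\kappa(0,\kappa^*)=Q(Lz_\kappa(0,\kappa^*)+F_\kappa(u^*,\kappa^*))=F_\kappa(u^*,\kappa^*)\neq0$. A second application of the implicit function theorem yields a $C^1$ function $s\mapsto\kappa(s)$ with $\kappa(0)=\kappa^*$ and $g(s,\kappa(s))=0$. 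Setting $\tau(s):=\kappa(s)-\kappa^*$ and $z(s):=z(s,\kappa(s))$ produces the asserted curve $(u^*+s\phi+z(s),\kappa^*+\tau(s))$, and the uniqueness in both steps shows this curve exhausts the solution set of $F=0$ in a neighbourhood of $(u^*,\kappa^*)$; the conditions $\tau(0)=0$ and $z(0)=0$ are immediate.

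Finally, the degeneracy $\tau'(0)=0$ follows from a short computation. Differentiating $g(s,\kappa(s))=0$ at $s=0$ gives $g_s(0,\kappa^*)+g_\kappa(0,\kappa^*)\tau'(0)=0$, so, since $g_\kappa(0,\kappa^*)\neq0$, it suffices to check $g_s(0,\kappa^*)=0$. But
\[
g_s(0,\kappa^*)=Q\bigl(L(\phi+z_s(0,\kappa^*))\bigr)=Q\bigl(Lz_s(0,\kappa^*)\bigr)=0,
\]
using $L\phi=0$ and $\Im L=\Ker Q$. I expect the one genuinely structural point to be the verification that $L|_Z:Z\to\Im L$ is an isomorphism, which rests on $\Im L$ being closed; this is exactly where the hypothesis $\dim\Ker L=\operatorname{codim}\Im L=1$ (the Fredholm index-zero structure, automatic in the application since $F_u=I-(\text{compact})$) is used, while both implicit function theorem steps and the vanishing of $\tau'(0)$ are then routine.
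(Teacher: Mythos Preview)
The paper does not supply its own proof of this proposition: it is quoted verbatim as \cite{CR1}*{Theorem~3.2} and invoked as a black box in the proof of Theorem~\ref{Thm:Thm2}. Your Lyapunov--Schmidt reduction followed by two applications of the implicit function theorem is exactly the classical Crandall--Rabinowitz argument and is carried out correctly, including the verification that $L|_Z:Z\to\Im L$ is an isomorphism (your remark that finite codimension of $\Im L$ together with boundedness of $L$ forces $\Im L$ to be closed is a standard fact, and indeed in the paper's application this is immediate from $F_u=I-(\text{compact})$) and the computation $g_s(0,\kappa^*)=0$ giving $\tau'(0)=0$. One minor observation: the extra hypothesis ``$F_\kappa(u^*,\kappa^*)\notin Z$'' in the statement is not used in your argument, and is in fact not needed---only the transversality condition $F_\kappa(u^*,\kappa^*)\notin\Im L$ matters (and in the general setting $Z\subset X$ while $F_\kappa\in Y$, so the hypothesis only makes literal sense when $X=Y$, as in the paper's application).
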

  Using this theorem with $X=Y=C_{\alpha,\beta}$, $F=\Phi$, and $u^*=u^{\kappa^*}$, we obtain a $C^1$ curve $(u(s),\kappa(s))=(u^{\kappa^*}+s\phi^{\kappa^*}+z(s),\kappa^*+\tau(s))$ with $(z,\tau):(-\delta,\delta)\to Z\times\R$ with $\delta>0$ sufficiently small, such that $u(s)$ is a solution to problem~\eqref{L} with $\kappa=\kappa(s)$ for all $s\in(-\delta,\delta)$, and that
  \begin{equation}\label{eq:curve}
      z(0)=0,\quad \tau(0)=0,\quad \tau'(0)=0,
  \end{equation}
  Furthermore, it follows from \eqref{eq:Z} that
  \[
  \int_\Omega p(u^{\kappa^*})^{p-1}\phi^{\kappa^*}(u(s)-u^{\kappa^*}_{j_*})\, dx=s\int_\Omega p(u^{\kappa^*})^{p-1}(\phi^{\kappa^*})^2\, dx,
  \]
  for each $s\in(-\delta,\delta)$, and thus
  \begin{equation}\label{eq:distinct}
  s_1,s_2\in(-\delta,\delta),\ s_1\neq s_2\implies u(s_1)\not\equiv u(s_2)\quad\textrm{in}\quad\Omega
  \end{equation}
  for $s_1,s_2\in(-\delta,\delta)$.

  Since $u(s)$ is a solution to problem~\eqref{L}with $\kappa=\kappa(s)$, we see that $\kappa(s)\le\kappa^*$ for any $s\in(-\delta,\delta)$. This together with $\kappa(0)=\kappa^*$, \eqref{eq:distinct}, and the uniqueness of a solution $u\in C^2(\Omega)\cap C_{\alpha,\beta}$ to problem \eqref{L} with $\kappa=\kappa^*$ implies that
  \[
  s\in(-\delta,\delta),\ s\neq 0\implies \kappa(s)<\kappa^*.                                             
  \]
  By the intermediate value theorem, there exists $\kappa_*\in[0,\kappa^*)$ such that for any $\kappa\in(\kappa_*,\kappa^*)$, there exist $s_1\in(-\delta,0)$ and $s_2\in(0,\delta)$ such that $\kappa(s_1)=\kappa(s_2)=\kappa$. By \eqref{eq:curve} and \eqref{eq:distinct}, $u(s_1)$ and $u(s_2)$ are two distinct solutions to problem \eqref{L}. The proof of Theorem~\ref{Thm:Thm2} is complete.
  \end{proof}
\section{Generalization: H\'{e}non-type equations}
  As a generalization of Theorems~\ref{Thm:Thm1}~and~\ref{Thm:Thm2}, we mention results on a boundary value problem for a H\'{e}non-type equation on cone:
\begin{equation}\label{H}\tag{\mbox{$\textrm{H}_\kappa$}}
    \left\{\begin{aligned}
      -\Delta u&=K(x)u^p\quad& &\textrm{in}\quad& &\Omega,\\
      u&>0\quad& &\textrm{in}\quad& &\Omega,\\
      u&=\kappa\mu& &\textrm{on}\quad& &\partial\Omega\setminus\{0\},
    \end{aligned}\right.
  \end{equation}
   where $K$ is a positive locally H\"{o}lder continuous function. We assume that there exist constants $c_1,c_2>0$ and $a\in\R$ such that
  \[
  c_1|x|^{a}\le K(x)\le c_2|x|^a\quad\textrm{for all}\quad x\in\Omega.
  \]
  We define a solution to problem~\eqref{H} in the same way as in Definition~\ref{Def:sols} with \eqref{eq:abcond} replaced by
  \[
  -N+2-\gamma<\beta<-\frac{2+a}{p-1}<\alpha<\gamma.
  \]
  We define $p^*_{a,\gamma}>0$ by
  \[
  p^*_{a,\gamma}:=\frac{N+a+\gamma}{N-2+\gamma}\quad\textrm{if}\quad a\ge -2,\quad p^*_{a,\gamma}:=1+\frac{-(2+a)}{\gamma}\quad\textrm{if}\quad a<-2.
  \]
  We note that the condition $p>p^*_{a,\gamma}$ is a necessary condition for the existence of a solution to problem \eqref{H}. Indeed, due to Laptev \cite{L}, there exists no function $u\in C^2(\Omega)$ such that
  \[
  -\Delta u\ge c_1|x|^au^p\jn\Omega,\quad u>0\jn\Omega.
  \]
  (See also e.g., \cite{BE}.)
  
  Now we state our results on the problem~\eqref{H}.
  \begin{Theorem}\label{Thm:Thm1'}
  Let $p>p^*_{a,\gamma}$, and $\mu\in C(\partial\Omega\setminus\{0\})$ be a nonnegative non-zero function such that 
  \[
  \limsup_{|x|\to+0}|x|^{-\alpha}\mu(x)<\infty,\quad \limsup_{|x|\to\infty}|x|^{-\beta}\mu(x)<\infty.
  \]
Then there exists a constant $\kappa^*>0$ with the following properties.
    \begin{enumerate}[label={\rm(\roman*)}]
      \item If $0<\kappa<\kappa^*$, then problem~\eqref{H} possesses a minimal solution.
      \item If $\kappa>\kappa^*$, then problem~\eqref{H} possesses no solutions.
    \end{enumerate}
  \end{Theorem}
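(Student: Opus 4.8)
The plan is to reduce Theorem~\ref{Thm:Thm1'} to the already-established Theorem~\ref{Thm:Thm1} by showing that all the machinery of Sections~2--4 carries over to problem~\eqref{H} with only cosmetic changes, the weight $U_{\alpha,\beta}$ replaced by itself and the exponents shifted to account for the factor $|x|^a$. First I would re-examine Proposition~\ref{Prop:Linear} and the operator $G$: these concern only the linear Poisson equation and are unaffected by $K$, so they remain verbatim valid under the new constraint $-N+2-\gamma<\beta<-\tfrac{2+a}{p-1}<\alpha<\gamma$, since that constraint still forces $-N+2-\gamma<\beta<\alpha<\gamma$. The key arithmetic fact to re-verify is the analogue of \eqref{eq:paa2}: from $\beta<-\tfrac{2+a}{p-1}<\alpha$ one gets $(p-1)\alpha>-(2+a)$ and $(p-1)\beta<-(2+a)$, hence
\[
p\alpha+a>\alpha-2,\qquad p\beta+a<\beta-2,
\]
which is exactly what is needed so that $K(x)(u^\kappa_j)^p\in C^{0,\tau}_\loc(\Omega)\cap C_{\alpha-2,\beta-2}$ whenever $u^\kappa_j\in C_{\alpha,\beta}$, because $c_1|x|^a(u^\kappa_j)^p\le K(x)(u^\kappa_j)^p\le c_2|x|^a(u^\kappa_j)^p$ and $|x|^a U_{\alpha,\beta}^p$ is comparable to $U_{p\alpha+a,p\beta+a}$, which is dominated by $U_{\alpha-2,\beta-2}$ on all of $\Omega$ by the two displayed inequalities.

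Next I would redo the supersolution lemma, i.e.\ the analogue of the construction in the proof of Proposition~\ref{Prop:Sec2}. Using $V_{\alpha,\beta}$ from Lemma~\ref{Lem:LinearSupsol} we have $-\Delta V_{\alpha,\beta}\ge U_{\alpha-2,\beta-2}\ge c\,|x|^a U_{p\alpha,p\beta}\cdots$; more carefully, since $p\alpha+a<\alpha-2$ and $p\beta+a>\beta-2$ fail to hold directly, I must instead use $\alpha-2>p\alpha+a$ and $\beta-2<p\beta+a$ (the inequalities just displayed, rearranged), which give $U_{\alpha-2,\beta-2}\ge C\,|x|^a U_{p\alpha,p\beta}=C\,|x|^a U_{\alpha,\beta}^p$ pointwise on $\Omega$ after accounting for the $(1+|x|^2)$ factors; then for small $\delta$, $-\Delta(\delta V_{\alpha,\beta})\ge C\delta\,|x|^a V_{\alpha,\beta}^p\ge c_2|x|^a(\delta V_{\alpha,\beta})^p\ge K(x)(\delta V_{\alpha,\beta})^p$ and $\delta V_{\alpha,\beta}\ge\delta U_{\alpha,\beta}\ge C\delta\mu$ on $\partial\Omega\setminus\{0\}$. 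This yields a supersolution, hence via the supersolution method (Lemma~\ref{Lem:Supsol}, whose proof transfers verbatim once the exponent bookkeeping is fixed) a minimal solution for small $\kappa$, so $\mathcal{K}\ne\emptyset$ and $\kappa^*>0$.

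For the upper threshold $\kappa^*<\infty$ I would follow Section~4: the eigenvalue problem \eqref{E} becomes $-\Delta\phi=\lambda p K(x)(u^\kappa)^{p-1}\phi$, and the stability inequality $\int_\Omega pK(x)(u^\kappa)^{p-1}\phi^2\,dx\le\int_\Omega|\nabla\phi|^2\,dx$ holds for $\kappa\in(0,\kappa^*)$ by the same argument via the differences $w^\kappa$, $v^\kappa$, provided one checks that $w^\kappa\in\mathcal{D}^{1,2}_0(\Omega)$ — which needs $(p-1)\alpha+\alpha_*+a>-N$ and $(p-1)\beta+\beta_*+a<-N$, achievable by choosing $\alpha_*,\beta_*$ appropriately since $(p-1)\alpha>-(2+a)$. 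Then $u^\kappa\ge\kappa u^1_0>0$ combined with $K\ge c_1|x|^a$ and stability gives $\kappa^{p-1}\int_\Omega p c_1|x|^a(u^1_0)^{p-1}\phi^2\,dx\le\int_\Omega|\nabla\phi|^2\,dx$ for all $\phi\in\mathcal{D}^{1,2}_0(\Omega)$, forcing $\kappa^*<\infty$. Finally I would invoke Proposition~\ref{Prop:Sec2}'s argument that $\mathcal{K}$ is an interval of the form $(0,\kappa^*)$ or $(0,\kappa^*]$; the monotonicity argument there uses only that $u^{\kappa'}$ is a supersolution for $\kappa<\kappa'$, which is unchanged.

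I expect the main obstacle to be purely bookkeeping: verifying that every exponent inequality scattered through Sections~2--4 (there are roughly a dozen: \eqref{eq:abcond}, \eqref{eq:paa2}, \eqref{eq:a*b*}--\eqref{eq:a*b*3}, \eqref{eq:D12check}, the analogue of Lemma~\ref{Lem:a*b*}, etc.) still holds after the uniform shift $\alpha\mapsto\alpha$, $p\alpha\mapsto p\alpha+a$, and the replacement $p^*_\gamma\mapsto p^*_{a,\gamma}$; the definition of $p^*_{a,\gamma}$ is precisely engineered so that the analogue of Lemma~\ref{Lem:a*b*} — namely $p\alpha+a+\gamma>-N$ and $p\beta+a-\gamma<-2$ together with $\alpha+\gamma>-N+2$, $\beta-\gamma<0$ — goes through (for $a\ge-2$ this is the Sobolev-type condition, for $a<-2$ it is the condition $\gamma>-(2+a)/(p-1)$, equivalently $p>1-(2+a)/\gamma$). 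Once these are checked the rest is a transcription. I will therefore state the theorem, then say "the proof is identical to that of Theorem~\ref{Thm:Thm1} after replacing $K(x)\equiv 1$ by a general $K$ with $c_1|x|^a\le K\le c_2|x|^a$ and shifting the exponents as above," and record only the handful of modified inequalities explicitly.
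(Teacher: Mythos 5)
Your proposal follows essentially the same route as the paper's own (sketched) proof in Section~7: the key supersolution chain $-\Delta V_{\alpha,\beta}\ge U_{\alpha-2,\beta-2}\ge CU_{p\alpha+a,p\beta+a}\ge CKV_{\alpha,\beta}^p$ built on the shifted exponent inequalities $p\alpha+a>\alpha-2$, $p\beta+a<\beta-2$, the stability inequality for the weighted linearized eigenvalue problem to force $\kappa^*<\infty$, and the unchanged monotonicity argument for the interval structure of $\mathcal{K}$. The only blemish is the self-contradictory sentence in your second paragraph (``since $p\alpha+a<\alpha-2$ \dots{} fail to hold directly, I must instead use $\alpha-2>p\alpha+a$'', which is the same inequality restated); the correct and sufficient inequalities are exactly the ones you derived first, so this is a slip of the pen rather than a gap.
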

  \begin{Theorem}\label{Thm:Thm2'}
  Assume the same conditions as in Theorem~{\rm{\ref{Thm:Thm1}}}. Define
  \[
  H_a(q):=-4\Lambda q^3+(N-2)(N-10-4a)q^2-8(N-4-a)q+4(2+a)^2,
  \]
  and assume additionally that 
  \begin{equation}\label{eq:pjl}
  p<p_{JL},\quad H_a(p-1)<0.               
  \end{equation}
  Let $\kappa^*$ be as in Theorem~{\rm\ref{Thm:Thm1'}}. Then the following properties hold.
    \begin{enumerate}[label={\rm(\roman*)}]
      \item If $\kappa=\kappa^*$, then problem~\eqref{H} possesses a unique solution.
      \item there exists a constant $\kappa_*\in[0,\kappa^*)$ such that if $\kappa_*<\kappa<\kappa^*$, then problem \eqref{H} possesses at least two solutions.
    \end{enumerate}
  \end{Theorem}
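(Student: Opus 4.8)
The statement to be proved is Theorem~\ref{Thm:Thm2'}, the Hénon-type generalization of Theorem~\ref{Thm:Thm2}. The plan is to follow the same scheme as in Sections~2--6, replacing the weight $1$ by $|x|^a$ throughout and tracking where the exponent $a$ enters the arithmetic. First I would set up the linear theory: Proposition~\ref{Prop:Linear} and Proposition~\ref{Prop:G} go through verbatim for the Poisson equation $-\Delta v = f$, since $K$ only affects the right-hand side $Ku^p$, which lies in $C_{p\alpha+a,p\beta+a}$ when $u\in C_{\alpha,\beta}$; the conditions $p\alpha+a>\alpha-2$ and $p\beta+a<\beta-2$ are exactly $\beta<-(2+a)/(p-1)<\alpha$, the revised \eqref{eq:abcond}. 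With this, the supersolution method (Lemma~\ref{Lem:Supsol}) and the existence of a minimal solution for small $\kappa$ (Proposition~\ref{Prop:Sec2}) carry over word for word, using $\delta V_{\alpha,\beta}$ as a supersolution and the bounds $c_1|x|^a\le K\le c_2|x|^a$. This establishes Theorem~\ref{Thm:Thm1'} once I also redo Section~4.

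For Section~4, the eigenvalue problem becomes $-\Delta\phi = \lambda p K(u^\kappa)^{p-1}\phi$; since $K(u^\kappa)^{p-1}\in C_{-2+\delta,-2-\delta}$ for suitable $\delta$ (using the revised $\alpha,\beta$ and $p>p^*_{a,\gamma}$), the compactness in Remark~\ref{Rmk:Gcpt} still applies and Lemma~\ref{Lem:EVP}, Proposition~\ref{Prop:stabil}, and the argument $\kappa^*<\infty$ go through, giving Theorem~\ref{Thm:Thm1'} in full. The heart of the matter is Section~5: the weighted energy estimates of Lemmas~\ref{Lem:NRG1}--\ref{Lem:Reg2}. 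In Lemma~\ref{Lem:NRG1} the right-hand side picks up a factor $|x|^a$, so the target exponent $N-2-4\nu/(p-1)$ becomes $N-2+a-4\nu/(p-1)$ — or more precisely all the $s_i$ shift by $a$, and the balancing condition $\nu^2/(2\nu-1)<p$ is unchanged because $K$ enters only through an overall power. In Lemma~\ref{Lem:Reg1} one obtains $\sup|x|^{(2+a)/(p-1)}u^\kappa(x)<\infty$ provided $p<p_{JL}$ (the Joseph--Lundgren exponent is unaffected by $a$ since it governs the balance $\nu^2/(2\nu-1)$ versus $p$, not the decay rate). The decisive new ingredient is Lemma~\ref{Lem:NRG2}: with the weight $|x|^a$, the critical inequality becomes
\[
\frac{1}{p}+\left(\left(\frac{N-2}{2}\right)^2+\Lambda\right)^{-1}\left(\frac{2+a}{p-1}-\frac{N-2}{2}\right)^2<1,
\]
and the computation at the end of the proof of Proposition~\ref{Prop:Sec4} — expanding $-4((N-2)^2/4+\Lambda)(p-1)^3 + p(4+2a-(N-2)(p-1))^2$ — yields exactly $H_a(p-1)$ after collecting terms. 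So the hypothesis \eqref{eq:pjl}, namely $p<p_{JL}$ and $H_a(p-1)<0$, is precisely what makes Lemmas~\ref{Lem:Reg1}--\ref{Lem:Reg2} run, and Proposition~\ref{Prop:Sec4} gives $\kappa^*\in\mathcal{K}$.

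Finally, Section~6 transfers without change in structure: the map $\Phi(u,\kappa)=u-\kappa u^1_0 - G[K u_+^p]$ is $C^1$ on $C_{\alpha,\beta}\times(0,\infty)$, its Fréchet derivative $\Phi_u(u,\kappa)h = h - G[pK u_+^{p-1}h]$ is a compact perturbation of the identity by Proposition~\ref{Prop:G}(ii) applied with the revised exponents, and Lemmas~\ref{Lem:lam*} and \ref{Lem:k*imker} go through verbatim with $(u^\kappa)^{p-1}$ replaced by $K(u^\kappa)^{p-1}$ — the only arithmetic checks are $(p-1)\alpha+\alpha_*+\alpha+a>-N$ and its $\beta$-counterpart, which follow from \eqref{eq:a*b*2} adapted to the Hénon setting. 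The Crandall--Rabinowitz bifurcation theorem then delivers uniqueness at $\kappa=\kappa^*$ and the second solution for $\kappa\in(\kappa_*,\kappa^*)$, exactly as in the proof of Theorem~\ref{Thm:Thm2}. \textbf{The main obstacle} I anticipate is purely bookkeeping: verifying that every inequality among the shifted exponents $\alpha_j,\beta_j,\alpha_*,\beta_*,s_i$ still holds with the $a$-dependent offsets, in particular re-deriving the analogue of Lemma~\ref{Lem:a*b*} from $p>p^*_{a,\gamma}$ (which now splits into the cases $a\ge-2$ and $a<-2$), and confirming that the algebraic identity relating the weighted-Hardy condition to $H_a(p-1)<0$ is correct. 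No genuinely new analytic idea is needed; the proof is a careful transcription with $a$ carried along.
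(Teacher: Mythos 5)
Your proposal follows exactly the route the paper itself takes for Theorem~\ref{Thm:Thm2'}: Section~7 of the paper is precisely this transcription of Sections~2--6 with the weight $|x|^a$ carried along --- the modified supersolution $-\Delta V_{\alpha,\beta}\ge CU_{p\alpha+a,p\beta+a}\ge CKV_{\alpha,\beta}^p$, the adjusted admissibility conditions on $\alpha_*,\beta_*$, the decay rate $(2+a)/(p-1)$ in place of $2/(p-1)$, the weighted-Hardy condition $\frac{1}{p}+\bigl(\bigl(\frac{N-2}{2}\bigr)^2+\Lambda\bigr)^{-1}\bigl(\frac{2+a}{p-1}-\frac{N-2}{2}\bigr)^2<1$ identified with $H_a(p-1)<0$, and then the unchanged Section~6 bifurcation argument. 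The one caveat is a point you share with the paper rather than a gap in your reasoning: expanding $-\bigl((N-2)^2+4\Lambda\bigr)(p-1)^3+p\bigl(4+2a-(N-2)(p-1)\bigr)^2$ in $q=p-1$ gives the linear coefficient $-4(2+a)(N-4-a)$ rather than the $-8(N-4-a)$ appearing in the displayed formula for $H_a$, so the claimed algebraic identity is exact only for $a=0$; the analytic condition that actually drives Lemmas~\ref{Lem:NRG2}--\ref{Lem:Reg2} is the weighted-Hardy inequality itself, and the cubic should be adjusted accordingly.
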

  The range \eqref{eq:pjl} can be rewritten in one of the following forms:
  \[
  \textrm{(i) }(p_1,p_2)\cup(p_3,p_{JL});\quad\textrm{(ii) }(p_1,p_2);\quad\textrm{(iii) }(p_1,p_{JL});\quad\textrm{(iv) }\emptyset.
  \]
  Here, in the cases (i) and (ii), $p_1$, $p_2$, and $p_3$ are roots of cubic equation $H_a(p-1)=0$ and satsify
  \begin{gather*}
    p_{a,\gamma}^*<p_1<p_2\le p_3<p_{JL}\quad\textrm{in the case (i)},\\
    p_{a,\gamma}^*<p_1<p_2\le p_{JL}\le p_3\quad\textrm{in the case (ii)}.
  \end{gather*}
  In the case (iii), there exists only one root $p_1$ of the equation $H_a(p-1)=0$ in the range $(p_{a,\gamma}^*,p_{JL})$. In the case (iv), there exists no root of the equation $H_a(p-1)=0$ in the range $(p_{a,\gamma}^*,p_{JL})$. Note also that only the case (iii) can occur if $a=0$.

  The proofs of Theorems~\ref{Thm:Thm1'}~and~\ref{Thm:Thm2'} are similar to those of Theorems~\ref{Thm:Thm1}~and~\ref{Thm:Thm2}. We explain some parts of proofs for the convenience of the reader. As in Section~3, we obtain the existence of a solution to problem~\eqref{H} for small $\kappa$ by the inequality
  \[
  -\Delta V_{\alpha,\beta}\ge CU_{p\alpha+a,p\beta+a}\ge CKV^p_{\alpha,\beta}\jn\Omega.
  \]
  As in Section~4, we find $\alpha_*$ and $\beta_*$ satisfying \eqref{eq:a*b*}, \eqref{eq:a*b*3}, and
  \[
    p\alpha+a+\alpha_*>-N,\quad p\beta+a+\beta_*<-N,
  \]
  and show that the first eigenfunction $\phi^\kappa$ for the eigenvalue problem
  \[
  -\Delta\phi=p\lambda Ku^{p-1}\phi\jn\Omega,\quad \phi\in\D^{1,2}_0(\Omega)
  \]
  belongs to $\phi^\kappa\in C_{\alpha_*,\beta_*}$. Furthermore, for the approximate solutions $\{u^\kappa_j\}_{j\in\{-1,0,\ldots\}}$ defined by
  \[
  -\Delta u^\kappa_{j+1}=K(u^\kappa_j)^p\jn\Omega,\quad u^\kappa_{j+1}=\kappa\mu\on\partial\Omega\setminus\{0\},
  \]
  the function $w^\kappa:=u^\kappa-u^\kappa_{j_*+1}$ belongs to $C_{\alpha_*,\beta_*}$ for some large $j_*$. Moreover, the stability inequality
  \[
  \int_\Omega pK(u^\kappa)^{p-1}\phi^2\,dx\le\int_\Omega|\nabla\phi|^2\,dx\quad\textrm{for any}\quad\phi\in\D^{1,2}_0(\Omega)
  \]
  can be obtained. This yields Theorem~\ref{Thm:Thm1'}.
  
  Using the stability inequality as in Section~5, we derive $\sup\limits_{\kappa\in(0,\kappa^*)}|x|^\frac{2+a}{p-1}u^\kappa(x)<\infty$ if $p<p_{JL}$. Furthermore, we obtain the same estimate as in Lemma~\ref{Lem:NRG2}, and $\sup\limits_{\kappa\in(0,\kappa^*)}\|w^\kappa\|_{C_{\alpha_*,\beta_*}}<\infty$ if
  \[
  \frac{1}{p}+\left(\left(\frac{N-2}{2}\right)^2+\Lambda\right)^{-1}\left(\frac{2+a}{p-1}-\frac{N-2}{2}\right)^2<1,
  \]
  which is equivalent to $H_a(p-1)<0$. The rest of the proof of Theorem~\ref{Thm:Thm2'} is same as in Section~6.
  \begin{Remark}
 As a further generalization, our method can be applied to a boundary value problem of an H\'{e}non-type equation on a general unbounded domain
     \begin{equation}\label{eq:GBP}
    \left\{\begin{aligned}
      -\Delta u&=K(x)u^p\quad& &\textrm{in}\quad& &\Omega,\\
      u&>0\quad& &\textrm{in}\quad& &\Omega,\\
      u&=\kappa\mu& &\textrm{on}\quad& &\partial\Omega.
    \end{aligned}\right.
  \end{equation}
  Specifically, if $\Omega$ is an open subset of the cone $\{r\theta\in\R^N : r>0,\theta\in A\}$ with the exterior cone condition (see e.g., \cite{GT}*{Section 2.8}), the same results as in Theorems~\ref{Thm:Thm1'}~and~\ref{Thm:Thm2'} can be obtained for the problem \eqref{eq:GBP}.
  \end{Remark}
  \noindent
{\bf Acknowledgment.}
This work was supported by JSPS KAKENHI Grant Number 23KJ0645.
\medskip
\begin{bibdiv}
\begin{biblist}
\bib{AZ}{article}{
   author={Ai, Jun},
   author={Zhu, Xi Ping},
   title={Positive solutions of inhomogeneous elliptic boundary value
   problems in the half space},
   journal={Comm. Partial Differential Equations},
   volume={15},
   date={1990},
   pages={1421--1446},
}
\bib{BE}{article}{
   author={Bandle, Catherine},
   author={Ess\'en, Matts},
   title={On positive solutions of Emden equations in cone-like domains},
   journal={Arch. Rational Mech. Anal.},
   volume={112},
   date={1990},
   pages={319--338},
}
\bib{BHNV}{article}{
   author={Bidaut-V\'{e}ron, Marie-Fran\c{c}oise},
   author={Hoang, Giang},
   author={Nguyen, Quoc-Hung},
   author={V\'{e}ron, Laurent},
   title={An elliptic semilinear equation with source term and boundary
   measure data: the supercritical case},
   journal={J. Funct. Anal.},
   volume={269},
   date={2015},
   pages={1995--2017},
}
\bib{BCN}{article}{
   author={Berestycki, H.},
   author={Capuzzo-Dolcetta, I.},
   author={Nirenberg, L.},
   title={Superlinear indefinite elliptic problems and nonlinear Liouville
   theorems},
   journal={Topol. Methods Nonlinear Anal.},
   volume={4},
   date={1994},
   pages={59--78},
}
\bib{BP}{article}{
   author={Bidaut-V\'{e}ron, Marie-Francoise},
   author={Pohozaev, Stanislav},
   title={Nonexistence results and estimates for some nonlinear elliptic
   problems},
   journal={J. Anal. Math.},
   date={2001},
   pages={1--49},
}
\bib{BV}{article}{
   author={Bidaut-V\'{e}ron, Marie-Francoise},
   author={Vivier, Laurent},
   title={An elliptic semilinear equation with source term involving
   boundary measures: the subcritical case},
   journal={Rev. Mat. Iberoamericana},
   volume={16},
   date={2000},
   pages={477--513},
}
\bib{CR1}{article}{
   author={Crandall, Michael G.},
   author={Rabinowitz, Paul H.},
   title={Bifurcation, perturbation of simple eigenvalues and linearized
   stability},
   journal={Arch. Rational Mech. Anal.},
   volume={52},
   date={1973},
   pages={161--180}
}
\bib{DFP}{article}{
   author={Dupaigne, Louis},
   author={Farina, Alberto},
   author={Petitt, Troy},
   title={Liouville-type theorems for the Lane-Emden equation in the
   half-space and cones},
   journal={J. Funct. Anal.},
   volume={284},
   date={2023},
   pages={Paper No. 109906, 27},
}
\bib{FW}{article}{
   author={Fern\'{a}ndez, Antonio J.},
   author={Weth, Tobias},
   title={The nonlinear Schr\"{o}dinger equation in the half-space},
   journal={Math. Ann.},
   volume={383},
   date={2022},
   pages={361--397}
}
\bib{FIK-1}{article}{
   author={Fila, M.},
   author={Ishige, K.},
   author={Kawakami, T.},
   title={Large-time behavior of solutions of a semilinear elliptic equation
   with a dynamical boundary condition},
   journal={Adv. Differential Equations},
   volume={18},
   date={2013},
   pages={69--100},
}
\bib{FIK0}{article}{
   author={Fila, Marek},
   author={Ishige, Kazuhiro},
   author={Kawakami, Tatsuki},
   title={Large-time behavior of small solutions of a two-dimensional
   semilinear elliptic equation with a dynamical boundary condition},
   journal={Asymptot. Anal.},
   volume={85},
   date={2013},
   pages={107--123},
}
\bib{FIK1}{article}{
   author={Fila, Marek},
   author={Ishige, Kazuhiro},
   author={Kawakami, Tatsuki},
   title={Existence of positive solutions of a semilinear elliptic equation
   with a dynamical boundary condition},
   journal={Calc. Var. Partial Differential Equations},
   volume={54},
   date={2015},
   pages={2059--2078}
}
\bib{FIK2}{article}{
   author={Fila, Marek},
   author={Ishige, Kazuhiro},
   author={Kawakami, Tatsuki},
   title={Positive solutions of a semilinear elliptic equation with singular
   Dirichlet boundary data},
   journal={J. Elliptic Parabol. Equ.},
   volume={1}
   date={2015},
   pages={335--362}
}
\bib{FIK3}{article}{
   author={Fila, Marek},
   author={Ishige, Kazuhiro},
   author={Kawakami, Tatsuki},
   title={Minimal solutions of a semilinear elliptic equation with a
   dynamical boundary condition},
   language={English, with English and French summaries},
   journal={J. Math. Pures Appl. (9)},
   volume={105},
   date={2016},
   pages={788--809},
}
\bib{GS}{article}{
   author={Gidas, B.},
   author={Spruck, J.},
   title={Global and local behavior of positive solutions of nonlinear elliptic equations},
   journal={Comm. Pure Appl. Math.},
   volume={34},
   date={1981},
   pages={525--598},
}
\bib{GT}{book}{
   author={Gilbarg, David},
   author={Trudinger, Neil S.},
   title={Elliptic partial differential equations of second order},
   series={Classics in Mathematics},
   note={Reprint of the 1998 edition},
   publisher={Springer-Verlag, Berlin},
   date={2001},
   pages={xiv+517},
}
\bib{IK}{article}{
   author={Ishige, Kazuhiro},
   author={Katayama, Sho},
   title={Supercritical H\'{e}non-type equation with a forcing term},
   journal={Adv. Nonlinear Anal.},
   volume={13},
   date={2024},
   pages={Paper No. 20240003, 28},
}
\bib{IOS01}{article}{
   author={Ishige, Kazuhiro},
   author={Okabe, Shinya},
   author={Sato, Tokushi},
   title={A supercritical scalar field equation with a forcing term},
   %language={English, with English and French summaries},
   journal={J. Math. Pures Appl.},
   volume={128},
   date={2019},
   pages={183--212}
}
\bib{KV}{article}{
   author={Kalton, N. J.},
   author={Verbitsky, I. E.},
   title={Nonlinear equations and weighted norm inequalities},
   journal={Trans. Amer. Math. Soc.},
   volume={351},
   date={1999},
   pages={3441--3497},
}
\bib{K}{article}{
   author={Katayama, Sho},
   title={Semilinear elliptic problems on the half space with a
   supercritical nonlinearity},
   journal={Discrete Contin. Dyn. Syst.},
   volume={44},
   date={2024},
   pages={3774--3806},
}

\bib{L}{article}{
   author={Laptev, G. G.},
   title={Absence of global positive solutions of systems of semilinear
   elliptic inequalities in cones},
   language={Russian, with Russian summary},
   journal={Izv. Ross. Akad. Nauk Ser. Mat.},
   volume={64},
   date={2000},
   pages={107--124},
   translation={
      journal={Izv. Math.},
      volume={64},
      date={2000},
      pages={1197--1215}
   },
}
\bib{LX}{article}{
   author={Li, Yimei},
   author={Xie, Guangheng},
   title={Positive solutions for semilinear elliptic systems with boundary
   measure data},
   journal={Ann. Mat. Pura Appl. (4)},
   volume={201},
   date={2022},
   pages={1325--1346},
}
\bib{MP}{article}{
   author={Mitidieri, \`E.},
   author={Pokhozhaev, S. I.},
   title={A priori estimates and the absence of solutions of nonlinear
   partial differential equations and inequalities},
   language={Russian, with English and Russian summaries},
   journal={Tr. Mat. Inst. Steklova},
   volume={234},
   date={2001},
   pages={1--384},
   translation={
      journal={Proc. Steklov Inst. Math.},
      date={2001},
      pages={1--362},
   },
}
\bib{NS01}{article}{
   author={Naito, Y\={u}ki},
   author={Sato, Tokushi},
   title={Positive solutions for semilinear elliptic equations with singular
   forcing terms},
   journal={J. Differential Equations},
   volume={235},
   date={2007},
   pages={439--483},
}
\bib{N}{article}{
   author={Nazarov, A. I.},
   title={Hardy-Sobolev inequalities in a cone},
   note={Problems in mathematical analysis. No. 31},
   journal={J. Math. Sci. (N.Y.)},
   volume={132},
   date={2006},
   pages={419--427},
}
\end{biblist}
\end{bibdiv}
\end{document}